\documentclass[11pt,leqno]{amsart}

\usepackage{amssymb,hyperref}

\newtheorem{theorem}{Theorem}[section]
\newtheorem{lemma}[theorem]{Lemma}
\newtheorem{proposition}[theorem]{Proposition}
\newtheorem{corollary}[theorem]{Corollary}

\theoremstyle{definition}
\newtheorem{definition}[theorem]{Definition}

\newtheorem{problem}[theorem]{Problem}

\numberwithin{equation}{section}

\DeclareMathOperator{\stab}{Stab}
\DeclareMathOperator{\rstab}{Rstab}
\DeclareMathOperator{\Sym}{Sym}
\DeclareMathOperator{\Aut}{Aut}
\DeclareMathOperator{\Fr}{Fr}
\DeclareMathOperator{\charac}{char}

\renewcommand{\epsilon}{\varepsilon}


\title[Maximal subgroups of multi-edge spinal groups]{Maximal
  subgroups of \\ multi-edge spinal groups}

\author[T. Alexoudas]{Theofanis Alexoudas} \address{Theofanis
  Alexoudas: Department of Mathematics, Royal Holloway, University of
  London, Egham TW20 0EX, UK}
\email{Theofanis.Alexoudas.2009@live.rhul.ac.uk}

\author[B. Klopsch]{Benjamin Klopsch} \address{Benjamin Klopsch:
  Mathematisches Institut, Heinrich-Heine-Universit\"at, 40225
  D\"usseldorf, Germany} \email{klopsch@math.uni-duesseldorf.de}

\author[A. Thillaisundaram]{Anitha Thillaisundaram} \address{Anitha
  Thillaisundaram: Mathematisches Institut,
  Heinrich-Heine-Universit\"at, 40225 D\"usseldorf, Germany}
\email{anitha.t@cantab.net}


\keywords{multi-edge spinal groups, branch groups, maximal subgroups}

\subjclass[2010]{Primary  20E08;  Secondary 20E28}
 
\begin{document}

\begin{abstract}
  A multi-edge spinal group is a subgroup of the automorphism group of
  a regular $p$-adic rooted tree, generated by one rooted automorphism
  and a finite number of directed automorphisms sharing a common
  directing path.  We prove that torsion multi-edge spinal groups do
  not have maximal subgroups of infinite index.  This generalizes a
  result of Pervova for GGS-groups.
\end{abstract}

\maketitle


\section{Introduction}
Branch groups are groups acting spherically transitively on a
spherically homogeneous infinite rooted tree and having subnormal
subgroup structure similar to the corresponding structure in the full
group of automorphisms of the tree. Early constructions were produced
by Grigorchuk~\cite{Grigorchuk} and Gupta and Sidki~\cite{Gupta}, and
they were generalized to so-called GGS-groups. The class of branch
groups provides important and easily describable examples for
finitely generated groups of intermediate word growth, or for finitely
generated infinite torsion groups; cf.\ the General Burnside Problem.

We deal here with multi-edge spinal groups acting on the regular
$p$-adic rooted tree $T$, for $p$ an odd prime. A multi-edge spinal
group $G = \langle a, b_1, \ldots, b_r \rangle$ is generated by a
rooted automorphism $a$ and a finite number of directed automorphisms
$b_1,\ldots,b_r$, for some $r \in \{1,\ldots,p-1\}$; see
Section~\ref{sec:mul-edge-spinal} for details. These groups generalize
GGS-groups, which correspond to the special case $r=1$.  In
particular, every torsion multi-edge spinal group is an infinite
$p$-group.  Moreover we show that, apart from one possible exception,
multi-edge spinal groups are branch; see
Proposition~\ref{prop:branch}.

Pervova~\cite{Pervova3, Pervova4} proved that the Grigorchuk group and
torsion GGS-groups do not contain maximal subgroups of infinite index.
Equivalently, these groups do not contain proper dense subgroups with
respect to the profinite topology.  On the other hand -- prompted by a
question of Grigorchuk, Bartholdi and
{\v{S}}uni{\'k}~\cite{BarthGrigSunik} -- Bondarenko gave
in~\cite{Bondarenko} a non-constructive example of a finitely
generated branch group that does have maximal subgroups of infinite
index.  Hence we face the following problem.

\begin{problem}
  Characterize among finitely generated branch groups those that
  possess maximal subgroups of infinite index and those that do not.
\end{problem}

In particular, Bondarenko's method -- by itself -- does not apply to
groups acting on the regular $p$-adic rooted tree~$T$ that are
residually finite-$p$.  It is natural to test how far Pervova's
results in~\cite{Pervova4} can be extended and multi-edge spinal
groups form a suitable generalization of GGS-groups.

\begin{theorem} \label{start} Let $G$ be a multi-edge spinal group
  acting on the regular $p$-adic rooted tree, for $p$ an odd prime,
  and suppose that $G$ is torsion. Then every maximal subgroup of $G$
  is normal of finite index~$p$.
\end{theorem}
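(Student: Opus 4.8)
The plan is to adapt Pervova's argument for GGS-groups \cite{Pervova4}, the two new ingredients being the branch property from Proposition~\ref{prop:branch} and a linear-algebra input over $\mathbb{F}_p$ needed to handle several directed generators simultaneously. \emph{Reduction:} since $G$ is a torsion group all of whose elements have $p$-power order, every finite quotient of $G$ is a finite $p$-group; hence if a maximal subgroup $M$ has \emph{finite} index, then $M/\operatorname{core}_G(M)$ is a maximal subgroup of the finite $p$-group $G/\operatorname{core}_G(M)$, so it is normal there of index $p$, and therefore $M\trianglelefteq G$ with $[G:M]=p$. It thus suffices to rule out maximal subgroups of infinite index. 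Suppose $M$ is one. For every finite-index normal subgroup $N\trianglelefteq G$ we have $N\not\leq M$, whence $MN>M$ and so $MN=G$; thus $M$ is dense in the profinite topology and in particular $M\,\stab_G(n)=G$ for every $n$.

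\emph{First level.} Since $G/\stab_G(1)=\langle a\,\stab_G(1)\rangle\cong C_p$, density yields an element $\hat a=ax\in M$ with $x\in\stab_G(1)$. Set $H=M\cap\stab_G(1)$, a normal subgroup of $M$ of index $p$, so that $M=\langle H,\hat a\rangle$; a routine closure argument (a dense subgroup that contains $H$ to index $p$ forces $H$ itself to be dense in the index-$p$ subgroup $\stab_G(1)$) shows that $H$ is dense in $\stab_G(1)$. Via the embedding $\psi\colon\stab_G(1)\hookrightarrow G\times\cdots\times G$ one then checks, using the $\hat a$-conjugates of $b_1,\dots,b_r$, that each coordinate projection $\pi_v(\psi(H))$ is a dense subgroup of $G$. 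Here one uses that the groups in question are self-similar, i.e.\ $\pi_v(\psi(\stab_G(1)))=G$, which holds because a section of $b_i$ is $b_i$ along its directing path and a power $a^{e}$ off it, with $e$ an entry of the defining vector $\mathbf{e}_i$, and any nonzero such $e$ generates $\langle a\rangle$ because $p$ is prime.

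\emph{The key step and the conclusion.} The crux is to promote ``$\pi_v(\psi(H))$ dense in $G$'' to ``$M$ contains a subgroup of finite index'', since the latter contradicts $[G:M]=\infty$. A dense subgroup need not be everything, so one must produce concrete elements of $M$: feeding well-chosen $p$-th powers and commutators of the $b_i$ and of $\hat a$ back through $\psi$, one uses the torsion hypothesis to collapse the rooted (power-of-$a$) tails of such elements under repeated $p$-th powering, so that what survives generates a subgroup of finite index---concretely $\stab_G(1)'$, or a product $\prod_{|v|=n}\rstab_G(v)$ of rigid stabilizers, which has finite index in $G$ precisely because $G$ is branch (Proposition~\ref{prop:branch}). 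This forces that finite-index subgroup into $M$, the desired contradiction. Hence every maximal subgroup of $G$ has finite index, and by the reduction in the first paragraph it is normal of index $p$.

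\emph{Main obstacle.} Everything except the previous paragraph is soft; the hard point is exactly the upgrade from ``dense'' to ``all of $G$'' along the coordinates. For a single directed generator ($r=1$) this is Pervova's explicit calculation with commutators and $p$-th powers inside $\stab_G(1)$. For $r\ge 2$ the bookkeeping is substantially heavier, since one must control $r$ exponent vectors at once, and the computation closes only thanks to the spanning behaviour of $\mathbf{e}_1,\dots,\mathbf{e}_r$ over $\mathbb{F}_p$---the same arithmetic that underpins both torsion and branchness in Proposition~\ref{prop:branch}. A secondary technical issue is to make the descent uniform, i.e.\ to pin down the level $n$ at which rigid stabilizers become trapped in $M$; this relies once more on self-similarity together with the branch structure.
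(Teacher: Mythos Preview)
Your reduction (first paragraph) and the translation into ``no proper dense subgroup'' are correct and match the paper exactly. The genuine gap is your ``key step'': you describe a hope---feed $p$-th powers and commutators through $\psi$ until a rigid level stabilizer lands in $M$---but you supply no mechanism that actually achieves this, and you acknowledge as much in your final paragraph. Density of $\pi_v(\psi(H))$ in $G$ is cheap; the whole difficulty is to turn an element $az\in M$ with $z\in G'$ into a genuine $a$ at some vertex, and your proposal contains no device for controlling $z$.

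The paper's route is more specific and rather different in flavor from what you sketch. It does \emph{not} try to trap a finite-index subgroup inside $M$ directly. Instead it proves two statements that contradict one another: (i) for a proper dense $M$, every projection $M_u$ is proper in $G_u$ (this is where just-infiniteness and the branch property enter, via $\rstab_M(u)_u \trianglelefteq M_u$); and (ii) for a dense torsion $M$, some $M_u$ equals $G_u$. For (ii) one introduces a length function on $G'$ and two explicit maps $\Theta_1,\Theta_2\colon G'\to G'$ with the property that if $az\in M$ then $a\,\Theta_i(z)\in M_{u_p}$; a combinatorial argument shows that iterated application of $\Theta_1,\Theta_2$ drives $\partial(z)$ down to $0$ or $2$, and the length-$2$ case is handled by hand. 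This produces $a\in M_u$; a separate induction (using torsion to kill orders under $y\mapsto\varphi_\ell(y^p)$) produces some $b\in\langle b_1,\dots,b_r\rangle$ with $\epsilon_{b_i}(b)\neq 0$ in $M_u$. One then \emph{iterates over $i=1,\dots,r$}, swapping generators as one goes, rather than doing a single linear-algebra step as you suggest. Nothing in your proposal corresponds to the $\Theta$-maps or the length-reduction theorem, and without them the upgrade from ``dense'' to ``all of $G$'' does not go through.
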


As indicated in ~\cite{Pervova4}, one motivation for our investigation
comes from a conjecture of Passman concerning the group algebra $K[G]$
of a finitely generated group $G$ over a field $K$ with $\charac K =
p$.  The conjecture states that, if the Jacobson radical
$\mathcal{J}(K[G])$ coincides with the augmentation ideal
$\mathcal{A}(K[G])$ then $G$ is a finite $p$-group;
see~\cite[Conjecture~6.1]{Passman2}.  In~\cite{Passman2}, Passman
showed that if $\mathcal{J}(K[G])=\mathcal{A}(K[G])$ then $G$ is a
$p$-group and every maximal subgroup of $G$ is normal of index~$p$.
Hence multi-edge spinal groups that are torsion yield natural
candidates for testing Passman's conjecture.  It is important to widen
this class of candidates, as even the Gupta-Sidki group for $p=3$ does
not satisfy $\mathcal{J}(K[G]) = \mathcal{A}(K[G])$; this follows from
\cite{Said}.

Grigorchuk and Wilson~\cite{GrigWils} have generalized Pervova's
results in~\cite{Pervova3,Pervova4} by means of commensurability.  Two
groups are said to be \textit{commensurable} if they have isomorphic
subgroups of finite index.  Let $G$ be as in Theorem~\ref{start}. From
\cite[Lemma~1]{GrigWils}, it follows that whenever $G$ is a subgroup
of finite index in a group~$H$ then $H$ does not have maximal
subgroups of infinite index.  Using this and \cite[Lemma~3]{GrigWils}
we derive the following consequence of Theorem~\ref{start}.

\begin{corollary} \label{cor:main}
  Let $G$ be as in Theorem~\textup{\ref{start}}.  If $H$ is a group
  commensurable with $G$ then every maximal subgroup of $H$ has finite
  index in~$H$.
\end{corollary}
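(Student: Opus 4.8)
The plan is to deduce Corollary~\ref{cor:main} formally from Theorem~\ref{start} together with the two lemmas of Grigorchuk and Wilson recalled above; the only thing to do by hand is to check that $G$ meets the hypotheses under which those lemmas apply. First I would record the relevant structural features of $G$: it is finitely generated by the very definition of a multi-edge spinal group; it is torsion and has the congruence subgroup property by the hypotheses of the corollary; and by Proposition~\ref{prop:branch} it is a branch group, the one exception permitted in that proposition not occurring here since it is excluded by the torsion hypothesis. Thus $G$ is a finitely generated branch group with the congruence subgroup property, and Theorem~\ref{start} says that every maximal subgroup of $G$ is normal of index~$p$; in particular $G$ has no maximal subgroup of infinite index, equivalently no proper subgroup that is dense in the profinite topology.

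Now let $H$ be commensurable with $G$. The properties just listed for $G$ --- finitely generated, branch, congruence subgroup property, no maximal subgroup of infinite index --- are precisely the hypotheses of \cite[Lemma~4]{GrigWils}, whose conclusion is that no group commensurable with such a $G$ has a maximal subgroup of infinite index; applying this to $H$ gives exactly that every maximal subgroup of $H$ has finite index. It is worth unwinding the mechanism behind this: fixing finite-index subgroups $G_0\le G$ and $H_0\le H$ with $G_0\cong H_0$, the argument first shows that the property of having no maximal subgroup of infinite index descends from the branch group $G$ to its finite-index subgroup $G_0$, hence passes across the isomorphism to $H_0$, and then lifts from $H_0$ to $H$ by \cite[Lemma~1]{GrigWils}, which asserts precisely that this property is inherited by finite overgroups.

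The lifting step is elementary: if $H_0$ has finite index in $H$ and $D<H$ is dense in the profinite topology, then, using cores of finite-index subgroups in $H$ and the Dedekind modular law, $D\cap H_0$ is dense in $H_0$, hence equals $H_0$; so $H_0\le D$ and $D$ has finite index, contradicting density. The main obstacle is therefore the descent step, which is the substance of \cite[Lemma~4]{GrigWils}: for an arbitrary group, having no maximal subgroup of infinite index is \emph{not} inherited by subgroups of finite index, and forcing it to descend requires controlling arbitrary finite-index subgroups through the rigid stabilizers of the branch action --- which is exactly where the congruence subgroup property is indispensable. This is also the only point at which the branch structure of $G$, as opposed to the bare conclusion of Theorem~\ref{start}, is used.
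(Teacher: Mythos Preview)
Your proposal is correct and follows essentially the same route as the paper, which simply records that the corollary is a consequence of Theorem~\ref{start} via \cite[Lemma~4]{GrigWils}. You have additionally spelled out the hypothesis check (in particular that the torsion assumption, via Theorem~\ref{torsion}, rules out the exceptional group $\mathcal{G}$ so that Proposition~\ref{prop:branch} applies) and unpacked the descent--lift mechanism behind the Grigorchuk--Wilson lemmas; this is helpful elaboration but not a different argument.
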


Finally, we remark that most parts
of the proof of Theorem~\ref{start} go through under the assumption
that the group is just infinite and not necessarily torsion.  One may
therefore speculate that, in fact, every just infinite multi-edge
spinal group has the property that all its maximal subgroups are of
finite index.


\section{Preliminaries} \label{sec:2} In the present section we recall
the notion of branch groups and establish prerequisites for the rest
of the paper.  For more details,
see~\cite{BarthGrigSunik,NewHorizons}.

\subsection{The regular $p$-adic rooted tree and its automorphisms}
Let $T$ the regular $p$-adic rooted tree, for an odd prime~$p$.  Let
$X$ be an alphabet on $p$ letters, e.g., $X=\{1,2,\ldots,p\}$.  The
set of vertices of $T$ can be identified with the free monoid
$\overline{X}$; in particular, the root of~$T$ corresponds to the
empty word~$\varnothing$. For each word $v\in \overline{X}$ and letter
$x$, an edge connects $v$ to $vx$. There is a natural length function
on $\overline{X}$, and the words $w$ of length $|w| = n$, representing
vertices that are at distance $n$ from the root, constitute the
\textit{$n$th layer} of the tree. The tree is called \textit{regular}
because all vertices have the same out-degree~$p$, and the
\textit{boundary} $\partial T$ consisting of all infinite rooted paths
is in one-to-one correspondence with the $p$-adic integers.  More
generally, one considers rooted trees that are not necessarily
regular, but spherically homogeneous, meaning that vertices of the
same length have the same degree.

We write $T_u$ for the full rooted subtree of $T$ that has its root at
a vertex~$u$ and includes all vertices $v$ with $u$ a prefix of
$v$. As $T$ is regular $p$-adic, for any two vertices $u$ and $v$ the
subtrees $T_u$ and $T_v$ are isomorphic under the map that deletes the
prefix $u$ and replaces it by the prefix $v$. We refer to this
identification as the natural identification of subtrees and write
$T_n$ to denote the subtree rooted at a generic vertex of level $n$.

We observe that every automorphism of $T$ fixes the root and that the
orbits of $\Aut(T)$ on the vertices of the tree $T$ are precisely its
layers.  Consider an automorphism $f \in \Aut(T)$.  The image of a
vertex $u$ under $f$ is denoted by $u^f$.  For a vertex $u$, thought
of as a word over $X$, and a letter $x \in X$ we have $(ux)^f=u^fx'$
where $x' \in X$ is uniquely determined by $u$ and~$f$.  This induces
a permutation $f(u)$ of $X$ so that
\[
(ux)^f = u^f x^{f(u)}.
\]
The automorphism $f$ is called \textit{rooted} if $f(u)=1$ for $u\ne
\varnothing$.  It is called \textit{directed}, with directing path
$\ell \in \partial T$, if the support $\{u \mid f(u)\ne1 \}$ of its
labelling is infinite and contains only vertices at distance $1$ from
$\ell$.

The \textit{section} of $f$ at a vertex $u$ is the unique automorphism
$f_u$ of $T \cong T_{|u|}$ given by the condition $(uv)^f = u^f
v^{f_u}$ for $v \in \overline{X}$.

\subsection{Subgroups of $\Aut(T)$}
Let $G$ be a subgroup of $\Aut(T)$ acting \textit{spherically
  transitively}, i.e., transitively on every layer of $T$. The
\textit{vertex stabilizer} $\stab_G(u)$ is the subgroup consisting of
elements in $G$ that fix the vertex~$u$.  For $n \in \mathbb{N}$, the
\textit{$n$th level stabilizer} $\stab_G(n)= \cap_{|v|=n} \stab_G(v)$
is the subgroup consisting of automorphisms that fix all vertices at
level $n$.  Note that elements in $\stab_G(n)$ fix all vertices up to
level $n$ and that $\stab_G(n)$ has finite index in $G$.

The full automorphism group $\Aut(T)$ is a profinite group.  Indeed,
\[
\Aut(T)= \varprojlim_{n\to\infty} \Aut(T_{[n]}),
\]
where $T_{[n]}$ denotes the subtree of $T$ on the finitely many
vertices up to level~$n$. The topology of $\Aut(T)$ is defined by the
open subgroups $\stab_{\Aut(T)}(n)$, $n \in \mathbb{N}$.  The level
stabilizers $\stab_G(n)$, $n \in \mathbb{N}$, form a natural family of
principal congruence subgroups for~$G$.  The subgroup $G$ of $\Aut(T)$
has the \textit{congruence subgroup property} if the profinite
topology and the congruence topology on $G$ coincide, i.e., if for
every subgroup $H$ of finite index in $G$, there exists some $n$ such
that $\stab_G(n)\subseteq H$.

Every $g\in \stab_{\Aut(T)} (n)$ can be identified with a collection
$g_1,\ldots,g_{p^n}$ of elements of $\Aut(T_n)$, where $p^n$ is the
number of vertices at level $n$.  Indeed, denoting by $u_1, \ldots,
u_{p^n}$ the vertices of $T$ at level $n$, there is a natural
isomorphism
\[
\stab_{\Aut(T)}(n) \cong \prod\nolimits_{i=1}^{p^n} \Aut(T_{u_i})
\cong \Aut(T_n) \times \overset{p^n}{\ldots} \times \Aut(T_n).
\]
Since $T$ is regular, $\Aut(T_n)$ is isomorphic to $\Aut(T)$ after the
natural identification of subtrees. Therefore the decomposition
$g=(g_1,\ldots,g_{p^n})$ defines an embedding
\[
\psi_n \colon \stab_{\Aut(T)}(n) \rightarrow \prod\nolimits_{i=1}^{p^n}
\Aut(T_{u_i}) \cong \Aut(T) \times \overset{p^n}{\ldots} \times
\Aut(T).
\]

We write $U_u^G$ for the restriction of the vertex stabilizer
$\stab_G(u)$ to the subtree rooted at a vertex $u$. Since $G$ acts
spherically transitively, the vertex stabilizers at every level are
conjugate under~$G$. We write $U_n^G$ for the common isomorphism type
of the restriction of the $n$th level vertex stabilizers, and we call
it the \textit{$n$th upper companion group} of $G$.  We say that $G$
is \textit{fractal} if every upper companion group $U_n^G$ coincides
with the group~$G$, after the natural identification of subtrees.

Next, the subgroup $\rstab_G(u)$, consisting of all automorphisms in
$G$ that fix all vertices $v$ of $T$ not having $u$ as a prefix, is
called the \textit{rigid vertex stabilizer} of $u$ in $G$. For a
vertex $u\in T$, we write $\rstab_G(u)_u$ for the restriction of the
rigid vertex stabilizer to the subtree rooted at $u$. The
\textit{rigid $n$th level stabilizer} is the product
\[
\rstab_G(n)=\prod\nolimits_{i=1}^{p^n} \rstab_G(u_i) \trianglelefteq G
\]
of the rigid vertex stabilizers of the vertices $u_1, \ldots, u_{p^n}$
at level~$n$. Since $G$ acts spherically transitively, the rigid
vertex stabilizers at each level are conjugate under~$G$.  The common
isomorphism type $L_n^G$ of the $n$th level rigid vertex stabilizers
is called the \textit{$n$th lower companion group} of $G$.

\subsection{Branch groups} More generally, we recall that a
\textit{spherically homogeneous} infinite rooted tree $T =
T_{\overline{m}}$ is constructed over a sequence of alphabets
$X_1,X_2,\ldots$ with $|X_i|=m_i \ge 2$, where $\overline{m} =
(m_n)_{n=1}^\infty$ is a sequence of natural numbers, in such a way
that all vertices at the same level $n-1$ have the same
out-degree~$m_n$. In the case of a regular $p$-adic rooted tree, for
$p$ an odd prime, the branching sequence is constant:
$\overline{m}=(p,p,\ldots)$.

\begin{definition}[\cite{NewHorizons}] \label{branch} A group $G$ is a
  \textit{branch group}, if there is a spherically homogeneous rooted
  tree $T=T_{\overline{m}}$, with branching sequence
  $\overline{m}=(m_n)_{n=1}^\infty$, and an embedding $G\hookrightarrow
  \Aut(T)$ such that
  \begin{enumerate}
  \item[(1)] the group $G$ acts transitively on each layer of the
    tree;
  \item[(2)] for each level $n$ there exists a subgroup $L_n$ of the
  automorphism group $\Aut(T_n)$ of the full subtree $T_n$ rooted at a
  level $n$ vertex such that the direct product
  \[
  H_n =L_n^{(1)}\times \ldots \times L_n^{(N_n)} \le
  \stab_{\Aut(T)}(n), \text{ where } L_n^{(j)}\cong L_n,
  \]
  of $N_n= \prod_{i=1}^n m_i$ copies of $L_n$ is normal and of finite
  index in $G$.
  \end{enumerate}
\end{definition}

There exists an alternative and more intrinsic algebraic definition,
which can also be found in~\cite{NewHorizons}. For a fixed embedding
$G\hookrightarrow\Aut(T)$, the pair
$((L_n)_{n=1}^\infty,(H_n)_{n=1}^\infty)$ is called a \textit{branch
  structure}.  If $G$ is branch then a `standard' branch structure is
given by $((L_n^G)_{n=1}^\infty,(\rstab_G(n))_{n=1}^\infty)$. Thus
condition (2) of the definition means that all rigid level stabilizers
$\rstab_G(n)$ are of finite index in $G$.


\section{Multi-edge spinal groups} \label{sec:mul-edge-spinal} Let $T$
be the regular $p$-adic rooted tree, for an odd prime $p$.  The
vertices of $T$ can be identified with words over an alphabet $X$ of
size $p$; sometimes it is convenient to label them explicitly by
finite sequences in $\{1,\ldots,p\}$.  Let $\ell =(l_n)_{n=0}^\infty$
be an infinite path in $T$ starting at the root, with $l_n=x_1 \cdots
x_n$ where $x_1,\ldots,x_n\in X$. For every $n \ge 1$ and $y \in X
\setminus \{x_n\}$, we denote by $s_{n,y}$ the immediate descendants
of $l_{n-1}$ not lying in $\ell$. The doubly indexed family
$S=(s_{n,y})_{n,y}$ is a \textit{multi-edge spine} in $T$.  Recall
that $\Aut(T)$ acts transitively within the layers of $T$, and in
particular, on the boundary $\partial T$. Hence, conjugating by an
element of $\Aut(T)$, we may choose, for simplicity, the spine to be
associated to the rightmost infinite path $(\varnothing, u_p, u_{pp},
\ldots)$ starting at the root vertex of the tree.

\subsection{Construction of multi-edge spinal groups}
By $a$ we denote the rooted automorphism, corresponding to the
$p$-cycle $(1 \, 2 \, \ldots \, p)\in \Sym(p)$, that cyclically
permutes the vertices $u_1, \ldots, u_p$ of the first level.  Recall
the coordinate map
\[
\psi_1\colon \stab_{\Aut(T)}(1) \rightarrow \Aut(T_{u_1}) \times
\ldots \times \Aut(T_{u_p}) \cong \Aut(T) \times \overset{p}{\ldots}
\times \Aut(T).
\]
Given $r \in \mathbb{N}$ and a finite $r$-tuple $\mathbf{E}$ of
$(\mathbb{Z}/p\mathbb{Z})$-linearly independent vectors
\[
\mathbf{e}_i =(e_{i,1}, e_{i,2},\ldots , e_{i,p-1})\in
(\mathbb{Z}/p\mathbb{Z})^{p-1}, \qquad i\in \{1,\ldots,r \},
\]
we recursively define directed automorphisms $b_1, \ldots, b_r \in
\stab_{\Aut(T)}(1)$ via
\[
\psi_1(b_i)=(a^{e_{i,1}}, a^{e_{i,2}},\ldots,a^{e_{i,p-1}},b_i),
\qquad i\in \{1,\ldots,r \}.
\]
We call the subgroup $G=G_{\mathbf{E}}=\langle a, b_1, \ldots, b_r
\rangle$ of $\Aut(T)$ the \textit{multi-edge spinal group} associated
to the defining vectors $\mathbf{E}$.  We observe that $\langle a
\rangle \cong C_p$ and $\langle b_1, \ldots, b_r \rangle \cong C_p^r$
are elementary abelian $p$-groups.

By choosing only one vector 
\[
\mathbf{e}=(e_1,\ldots,e_{p-1})\in (\mathbb{Z}/p\mathbb{Z})^{p-1}
\]
and defining an automorphism $b$ of $\Aut(T)$ via
\[
\psi_1(b) =(a^{e_1},\ldots , a^{e_{p-1}},b)
\]
we obtain the GGS-group $G_{(\mathbf{e})}=\langle a,b\rangle$
corresponding to the defining vector~$\mathbf{e}$.  For instance, the
Gupta-Sidki group for the prime $p$ arises by choosing
$\mathbf{e}=(1,-1,0,\ldots,0)$; see~\cite{Gupta}.

\subsection{General properties of multi-edge spinal groups}
The proof of the following result is straightforward; details may be
found in~\cite{thesis}.

\begin{proposition} \label{transitive} Let $G$ be a multi-edge spinal
  group. Then every section of every element of $G$ is contained in
  $G$. Moreover, $G$ acts spherically transitively on the tree $T$ and
  $G$ is fractal.
\end{proposition}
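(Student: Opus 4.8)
The plan is to verify the three assertions in turn, exploiting the recursive definition of the generators. First I would show that every section of every element of $G$ lies in $G$. Since sections compose (the section of $gh$ at $u$ is $g_u h_{u^g}$, suitably interpreted) and since taking sections is compatible with passing to deeper vertices, it suffices to compute the first-level sections of the generators $a, b_1, \ldots, b_r$ and check they lie in $G$. For $a$ this is trivial: all its sections are the identity. For $b_i$, the defining equation $\psi_1(b_i) = (a^{e_{i,1}}, \ldots, a^{e_{i,p-1}}, b_i)$ shows directly that the section of $b_i$ at a first-level vertex is either a power of $a$ or $b_i$ itself, hence lies in $G$. An easy induction on $|u|$ then gives $g_u \in G$ for all $g \in G$ and all vertices $u$: writing $g = w_1 \cdots w_k$ as a word in the generators and their inverses, each first-level section of $g$ is a product of generators and inverses (using that the first-level sections of generators lie in $G$), and one iterates.

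Next, for spherical transitivity I would argue level by level. The rooted automorphism $a$ acts as the $p$-cycle $(1\,2\,\cdots\,p)$ on the first level, so $G$ is transitive on level~$1$. Assuming transitivity up to level $n$, fix a level-$n$ vertex $u$ and a level-$(n+1)$ vertex $v = ux$; by the inductive hypothesis some $g \in G$ carries a fixed reference vertex to $u$, so after applying $g$ it suffices to move between the $p$ children of a single level-$n$ vertex. For this one wants an element of $G$ fixing that level-$n$ vertex whose section there acts as a $p$-cycle on the first level — and here the third assertion, fractality, is exactly what supplies such an element, so it is natural to prove fractality first (or simultaneously).

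For fractality I must show that for each $n$ the restriction $U_n^G$ of the $n$th level vertex stabilizer to the subtree below a level-$n$ vertex equals $G$ itself. By induction it is enough to treat $n = 1$: one shows that the projection of $\stab_G(u_i)$ onto the $i$th coordinate of $\psi_1$ is all of $G$. The inclusion in $G$ follows from the section argument above. For the reverse inclusion, I would exhibit, for each generator $c \in \{a, b_1, \ldots, b_r\}$, an element of $\stab_G(u_p)$ whose $p$th section is $c$: the element $b_i$ itself has $p$th section $b_i$, which handles the $b_i$; to get $a$ as a section one uses the linear independence of the vectors $\mathbf{e}_i$ — since the matrix with rows $\mathbf{e}_i$ has rank $r \ge 1$, some $\mathbb{Z}/p\mathbb{Z}$-combination $b = \prod_i b_i^{t_i}$ has $\psi_1(b) = (a^{f_1}, \ldots, a^{f_{p-1}}, b)$ with some $f_j \ne 0$, and then a suitable conjugate or commutator of such elements (together with $a$) produces an element of $\stab_G(u_p)$ whose last section is a nontrivial power of $a$, hence, raising to an appropriate power, $a$ itself. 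The same combinatorics, applied at the other coordinates $u_1, \ldots, u_{p-1}$, combined with conjugation by powers of $a$, shows every coordinate projection of $\stab_G(1)$ is onto.

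The main obstacle is this last point: producing, purely from the defining data, elements of the first-level stabilizer that project onto each of $a, b_1, \ldots, b_r$ in a prescribed coordinate. The case of the $b_i$ is immediate from their definition, but realizing the rooted automorphism $a$ as a section requires genuinely using the linear independence of $\mathbf{E}$ and some manipulation with commutators and conjugates by $a$; this is the step where the hypothesis $r \in \{1, \ldots, p-1\}$ and the independence assumption are actually consumed. Everything else — the closure under sections and the inductive bootstrap from level~$1$ to all levels for both transitivity and fractality — is formal. I would expect to organize the write-up as: (i) closure under sections; (ii) fractality at level $1$, which is the technical heart; (iii) fractality at all levels and spherical transitivity, both by routine induction using (ii).
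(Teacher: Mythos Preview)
Your plan is correct and standard; the paper in fact omits the proof entirely, declaring it ``straightforward'' and deferring details to the thesis~\cite{thesis}, so there is nothing to compare against beyond confirming that your outline is the expected one.

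One small simplification: the step you flag as the ``main obstacle'' --- producing the rooted automorphism $a$ as a section at $u_p$ --- is easier than you suggest. You do not need the full linear independence of $\mathbf{E}$, nor any commutators: since $\mathbf{e}_1 \neq \mathbf{0}$, some entry $e_{1,j}$ is nonzero, and then the single conjugate $b_1^{a^{p-j}} \in \stab_G(1)$ has $p$th section $a^{e_{1,j}}$, which generates $\langle a \rangle$. Combined with $\varphi_p(b_i) = b_i$ for each $i$, this already gives $U_{u_p}^G = G$; conjugation by powers of $a$ transports this to every first-level vertex. So the ``technical heart'' is a one-line computation, consistent with the paper's assessment that the whole proposition is routine.
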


The next theorem, adapted to the present context, gives necessary and
sufficient conditions for a multi-edge spinal group to be periodic.

\begin{theorem}[\cite{NewHorizons,Vovkivsky}] \label{torsion} Let
  $G_{\mathbf{E}}=\langle a, b_1, \ldots, b_r \rangle$ be a multi-edge
  spinal group corresponding to an $r$-tuple of defining vectors
  $\mathbf{E} = (\mathbf{e}_i)_{i=1}^r$. Then $G$ is an infinite
  $p$-group if and only if for every $\mathbf{e}_i = (e_{i,1}, \ldots,
  e_{i,p-1})$,
  \[
  \sum\nolimits_{j=1}^{p-1} e_{i,j}\equiv 0 \pmod{p}.
  \]
\end{theorem}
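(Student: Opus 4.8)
The plan is to prove both directions by analysing the length of sections of elements of $G$ under the coordinate map $\psi_1$, following the standard "length-reduction" technique for spinal groups. Since $G$ is residually finite-$p$ (it embeds in the pro-$p$ group $\Aut(T)$ restricted to the levels, and $\langle a\rangle, \langle b_1,\dots,b_r\rangle$ generate a group acting through finite $p$-groups on each finite subtree $T_{[n]}$), $G$ is a $p$-group if and only if it is torsion, so it suffices to decide when every element has finite order.

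First I would prove the "only if" direction by exhibiting an element of infinite order when some $\mathbf{e}_i$ has $\sigma_i := \sum_{j=1}^{p-1} e_{i,j} \not\equiv 0 \pmod p$. The natural candidate is $g = ab_i$ (or a suitable power/variant). Computing $\psi_1(g^p)$ via the recursion $\psi_1(b_i) = (a^{e_{i,1}}, \dots, a^{e_{i,p-1}}, b_i)$ and the fact that conjugating by $a$ cyclically permutes the $p$ coordinates, one finds that $g^p$ lies in $\stab(1)$ and its first-level decomposition has each coordinate equal (up to the natural identification and reordering) to a word whose "abelianised" content involves $a^{\sigma_i} b_i$; crucially the exponent sum of $a$ accumulates rather than cancelling. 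Iterating, $g^{p}$ maps onto a tuple each of whose entries again projects onto something conjugate to $ab_i$ times a nontrivial power of $a$, so $g$ cannot have finite order — one tracks a nonzero invariant (the total $a$-exponent mod $p$, or an argument via the abelianisation $G/G'$ together with the first-level structure) that obstructs $g^{p^n} = 1$ for all $n$. This mirrors Vovkivsky's argument for GGS-groups and only needs the single "bad" vector $\mathbf{e}_i$.

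For the "if" direction, assume $\sigma_i \equiv 0 \pmod p$ for every $i$, and show every $g \in G$ has finite order by induction on a suitable notion of length. Write $g$ as a word in $a^{\pm1}, b_1^{\pm1}, \dots, b_r^{\pm1}$; up to conjugacy and replacing $g$ by $g^{p}$ (which does not affect whether $g$ has finite order) we may assume $g \in \stab_G(1)$, so $\psi_1(g) = (g_1, \dots, g_p) \in G \times \cdots \times G$ by Proposition~\ref{transitive}. Each section $g_k$ is again a word in the generators; the key point is that the total number of occurrences of $b_1, \dots, b_r$ in $g$ is, roughly, preserved when passing to $\psi_1(g)$ but then \emph{split up} among the $p$ coordinates — except for the occurrences that produce a "$b_i$ in the last coordinate", which are controlled by $a$-exponents. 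The condition $\sum_j e_{i,j} \equiv 0$ is exactly what guarantees that, after passing to a bounded power, the $a$-part in the relevant coordinate vanishes, so that the $b$-length of each section is strictly smaller than that of $g$ unless $g$ was already short; a bounded base case (elements of small $b$-length lie in a finite subgroup, e.g.\ using that $\langle a \rangle$ and $\langle b_1,\dots,b_r\rangle$ are finite and that $\stab_G(1)/\stab_G(2)$ is a finite $p$-group) then finishes the induction. One must also handle the descent carefully when $g \notin \stab_G(1)$: write $g = a^j h$ with $h \in \stab_G(1)$; if $j \not\equiv 0$, replace $g$ by $g^p \in \stab_G(1)$ and note $\psi_1(g^p)$ has coordinates that are cyclically-shifted products of the sections of $h$ and the $b_i$'s, which still have controlled $b$-length.

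The main obstacle I expect is making the length function and the induction genuinely work for $r > 1$: with several directed automorphisms one must track the multi-set of occurrences of $b_1, \dots, b_r$ simultaneously, and verify that the linear-independence hypothesis on the vectors $\mathbf{e}_i$ plus the vanishing of each coordinate sum together suffice to kill the accumulated $a$-exponents after a \emph{uniformly} bounded power (independent of $g$), so that the induction terminates. In the GGS case ($r=1$) this is Vovkivsky's theorem; the content here is checking that nothing in the argument uses $r=1$ beyond the per-vector condition stated, which is why the theorem is attributed to \cite{NewHorizons,Vovkivsky} — the adaptation is essentially bookkeeping, but the bookkeeping is where all the care is needed.
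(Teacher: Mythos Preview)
The paper does not prove Theorem~\ref{torsion}; it is stated with attribution to \cite{NewHorizons,Vovkivsky} and used as a black box, so there is no in-paper argument to compare your proposal against.  Your outline follows the standard length-reduction strategy of those references, and the ``only if'' direction is correct as stated: a direct computation gives $\varphi_p\big((ab_i)^p\big)=a^{\sigma_i}b_i$, which has infinite order by self-similarity when $\sigma_i\not\equiv 0$.

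For the ``if'' direction your sketch contains a genuine imprecision in how the hypothesis $\sigma_i\equiv 0$ enters.  You write that the sum condition makes ``the $b$-length of each section strictly smaller''; this is not what happens.  When $g=a^kh\notin\stab_G(1)$ with $\partial(h)=m$, the sections of $g^p$ are cyclic products $h_{\pi(1)}\cdots h_{\pi(p)}$ and have $b$-length at most $\sum_j\partial(h_j)\le m$, with equality possible.  What the condition $\sum_j e_{i,j}\equiv 0$ actually buys is that $\sum_{j=1}^p\epsilon_a(h_j)\equiv 0$, hence each section of $g^p$ lies in $\stab_G(1)$.  One then applies $\psi_1$ \emph{again}: since these sections are in $\stab_G(1)$ and (if $m\ge 2$) Lemma~\ref{shortening} gives strict length reduction at that step, the induction closes.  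Without isolating this two-step descent (first land in $\stab_G(1)$, then reduce length) the argument as you wrote it does not terminate, because length alone need not drop when passing from $g$ to the sections of $g^p$.  This is the only substantive gap; once it is fixed, the $r>1$ case is indeed just bookkeeping over the GGS argument, as you suspected, and linear independence of the $\mathbf{e}_i$ plays no role in the torsion criterion itself.
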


The next lemma shows: by a `change of coordinates', we can arrange
that $e_{1,1}=1$ in the defining vector $\mathbf{e}_1$ of a multi-edge
spinal group $G_\mathbf{E}$.

\begin{lemma}\label{dagger}
  Let $G = G_\mathbf{E}$ be a multi-edge spinal group. Then there
  exists an automorphism $f\in \Aut(T)$ of the form $f=f_0f_1=f_1f_0$,
  where $f_0$ is a rooted automorphism corresponding to a permutation
  $\pi \in \Sym(p)$ and $f_1\in \stab_G(1)$ with
  $\psi_1(f_1)=(f,\ldots,f)$, such that $G^f =
  G_{\widetilde{\mathbf{E}}} =\langle a, \tilde{b}_1, \ldots,
  \tilde{b}_r \rangle$ is a multi-edge spinal group generated by the
  rooted automorphism $a$ and directed automorphisms $\tilde{b}_1
  =b_1^f,\ldots, \tilde{b}_r =b_r^f$ satisfying $\psi_1(\tilde{b}_i)
  =(a^{\tilde{e}_{i,1}},\ldots,a^{\tilde{e}_{i,p-1}},\tilde{b}_i)$ for
  $i\in \{1, \ldots,r \}$ with $\tilde{e}_{1,1}=1$.
\end{lemma}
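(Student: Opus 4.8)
The plan is to realise the asserted normalisation of $e_{1,1}$ by conjugating $G$ with a suitable automorphism of the stated \emph{uniform} shape, combined with a harmless rechoice of the directed generator $b_1$.

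First I would observe that an automorphism $f$ of the announced form is completely pinned down by the permutation $\pi$: the requirements $f = f_0 f_1$ with $f_0$ rooted and underlying permutation $\pi$, and $\psi_1(f_1) = (f, \ldots, f)$, say precisely that $f$ acts as $\pi$ on the first level and has $f$ itself as its section at each first-level vertex, so that $f$ is the automorphism of $T$ applying $\pi$ at every vertex. This recursion is well posed, and $f_0$ and $f_1$ commute automatically. Hence the construction reduces to choosing $\pi \in \Sym(p)$.

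Next I would compute, through the coordinate map $\psi_1$, the effect of conjugation by such an $f$ on the generators. Conjugating the rooted generator $a$, which realises the $p$-cycle $\sigma = (1\,2\,\cdots\,p)$, produces the rooted automorphism realising $\pi \sigma \pi^{-1}$; for $G^f$ to remain a multi-edge spinal group with rooted generator $a$ and directing path the rightmost path of $T$, one needs $\pi$ to normalise $\langle \sigma \rangle$ and to fix the letter $p$. Identifying $\{1, \ldots, p\}$ with $\mathbb{Z}/p\mathbb{Z}$ so that $p$ plays the role of $0$, this forces $\pi$ to be multiplication by a unit $\alpha$, and then $a^f$ is a power of $a$. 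A direct computation then shows that each $\tilde b_i = b_i^f$ is again directed along the rightmost path, with $\psi_1(\tilde b_i) = (a^{\tilde e_{i,1}}, \ldots, a^{\tilde e_{i,p-1}}, \tilde b_i)$, the new coefficients being obtained from the old ones by the coordinate permutation determined by $\pi$, followed by multiplication by the scalar coming from $a^f$; in particular $\tilde e_{1,1}$ is a nonzero scalar multiple of one of the original coordinates $e_{1,\gamma}$, with both $\gamma$ and the scalar governed by $\alpha$.

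To finish, I would use that the $\mathbf{e}_i$ are $\mathbb{Z}/p\mathbb{Z}$-linearly independent, so $\mathbf{e}_1 \neq 0$ and some coordinate $e_{1,\gamma}$ is nonzero; choosing $\pi$ (that is, $\alpha$) so as to carry this coordinate into the first slot gives $\tilde e_{1,1} \neq 0$, and replacing $b_1$ by a suitable power $b_1^k$ — which leaves $G$ and its multi-edge spinal structure unchanged and merely rescales $\mathbf{e}_1$ — brings this value to $1$. It then remains to check that $G^f = G_{\widetilde{\mathbf{E}}}$ is a genuine multi-edge spinal group: the tuple $\widetilde{\mathbf{E}}$ stays linearly independent because permuting and then rescaling coordinates is an invertible operation, the common directing path is still the rightmost path because $\pi$ fixes $p$, and $\langle a, \tilde b_1, \ldots, \tilde b_r \rangle = G^f$ because $a^f \in \langle a \rangle$. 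The step I expect to require the real care is the conjugation computation just described — pushing the self-similar elements $b_i$ through $\psi_1$ and disentangling how the coordinate permutation coming from $\pi$ interlocks with the scalar by which conjugation scales the exponents of $a$, since it is exactly this interlock, together with the rescaling of $b_1$, that must be exploited to reach the value $1$ and not merely a nonzero value.
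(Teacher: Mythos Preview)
Your proposal is correct and follows essentially the same route as the paper: both take $\pi$ to be multiplication by a unit $\alpha$ on $\mathbb{Z}/p\mathbb{Z}$ (so that $\pi$ fixes $p$ and normalises $\langle a\rangle$), build the self-similar $f$ that applies $\pi$ at every vertex, compute that $a^f$ is a power of $a$ and that $b_i^f$ is again directed with defining vector obtained from $\mathbf{e}_i$ by the induced coordinate permutation together with a global scalar, and combine this with replacing $b_1$ by a suitable power to force $\tilde e_{1,1}=1$. The only cosmetic difference is the order of the two normalisations: the paper first rescales $b_1$ so that $e_{1,k}=k$ for some $k$ and then chooses $\alpha=k^{-1}$, whereas you conjugate first and rescale afterwards.
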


\begin{proof}
  Since the defining vectors $\mathbf{e}_1,\ldots, \mathbf{e}_r$ for
  $G$ are linearly independent over $\mathbb{Z}/p\mathbb{Z}$, each
  $\mathbf{e}_i$ satisfies
  \[
  \mathbf{e}_i=(e_{i,1},\ldots ,e_{i,p-1})\not \equiv \mathbf{0}
  \pmod{p}.
  \]
  In particular $\mathbf{e}_1 \not \equiv \mathbf{0}
  \pmod{p}$. Without loss of generality, assume that $e_{1,k}\equiv k$
  for some $k\in \{1,\ldots, p-1\}$; otherwise we replace $b_1$ by a
  power of itself. Then there exists some $l\in \{1,\ldots,p-1\}$ such
  that $kl\equiv 1 \pmod{p}$. Define a permutation $\pi \in \Sym(p)$
  by $x\pi = lx$, where $x\in \{1,\ldots, p\}$ represents a vertex in
  the first level of the tree $T$. Observe that $x\pi^{-1}=kx$ for all
  $x\in \{1,\ldots,p\}$. We consider the automorphism $f =f_0 f_1 =
  f_1 f_0 \in \Aut(T)$, where $f_0$ is a rooted automorphism
  corresponding to the permutation $\pi \in \Sym(p)$ and $f_1 \in
  \stab_G(1)$ is given by $\psi_1(f_1)=(f,\ldots,f)$.

  Set $\tilde{a}=(a^k)^f=(a^k)^{f_0}$. Then, for all $x\in
  \{1,\ldots,p\}$,
  \[
  x^{\tilde{a}} \equiv x^{f_0^{-1}a^kf_0} \equiv (kx)^{a^kf_0} \equiv
  (kx+k)^{f_0} \equiv (kx+k)l \equiv x+1 \equiv x^a \pmod{p}.
  \]
  Hence $\tilde{a}=a$. It follows that $a=(a^k)^f=(a^f)^k$, implying
  $a^f=a^l$.

  Setting $\tilde{b}_i=(b_i)^f$ for $i \in \{1,\ldots,r\}$, we obtain
  \begin{align*}
    \tilde{b}_i & = (f^{-1},\ldots,f^{-1}) (a^{e_{i,k}}, \ldots,
    \underbrace{a^{e_{i,1}}}_\textrm{$l^{\text{th}}$ resp.},
    \underbrace{a^{e_{i,k+1}}}_\textrm{$(l+1)^{\text{th}}$ coord.},
    \ldots, a^{e_{i,p-k}},b_i) (f,\ldots,f) \\
    & =(a^{le_{i,k}}, \ldots, a^{le_{i,p-k}}, \tilde{b}_i).
  \end{align*}
  In particular, $\tilde{e}_{1,1}\equiv le_{1,k}\equiv lk\equiv 1
  \pmod{p}$.  Thus $f$ as defined above has the required properties.
\end{proof}

In preparation for Proposition~\ref{Amaia-gamma} we establish the
following lemma.

\begin{lemma} \label{linearalgebra} Let $G_{\mathbf{E}}= \langle
  a,b_1,\ldots,b_r \rangle$ be a multi-edge spinal group associated to
  an $r$-tuple $\mathbf{E}$ with $r \ge 2$. Then there exists an
  $r$-tuple of defining vectors $\widetilde{\mathbf{E}}$ such that
  $G_{\widetilde{\mathbf{E}}}$ is conjugate to $G_{\mathbf{E}}$ by an
  element $f\in \Aut(T)$ as in Lemma~\textup{\ref{dagger}} and the
  following hold:
  \begin{enumerate}
  \item[(1)] $\tilde{e}_{i,1} \equiv 1 \pmod{p}$ for each $i\in
    \{1,\ldots,r\}$;
  \item[(2)] if $r=2$ and $p=3$, then $\tilde{\mathbf{e}}_1=(1,0),
    \tilde{\mathbf{e}}_2=(1,1)$;
  \item[(3)] if $r=2$ and $p>3$, then either
    \begin{enumerate}
    \item[(a)] for each $i\in \{1,2\}$ there exists $k\in \{2,\ldots,
      p-2\}$ such that $\tilde{e}_{i,k-1} \tilde{e}_{i,k+1} \not
      \equiv \tilde{e}_{i,k}^2 \pmod{p}$, or
    \item[(b)] $\tilde{\mathbf{e}}_1 = (1,0,\ldots,0,0),
      \tilde{\mathbf{e}}_2 = (1,0,\ldots,0,1)$;
    \end{enumerate}
  \item[(4)] if $r\ge 3$ then for each $i\in \{1,\ldots,r\}$ there
    exists $k\in \{2,\ldots, p-2\}$ such that $\tilde{e}_{i,k-1}
    \tilde{e}_{i,k+1} \not \equiv \tilde{e}_{i,k}^2 \pmod{p}$.
  \end{enumerate}
\end{lemma}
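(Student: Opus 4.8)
The plan is to recast the statement as a question in linear algebra over $\mathbb{F}_p=\mathbb{Z}/p\mathbb{Z}$. Replacing a generator $b_i$ by a power $b_i^{t}$ ($t\not\equiv 0$) or by $b_ib_j^{s}$ with $j\neq i$ produces a new generating set of $G$, namely $a$ together with directed automorphisms whose defining vectors arise from $\mathbf{e}_1,\ldots,\mathbf{e}_r$ by rescaling one vector, respectively adding a multiple of one to another; as these elementary row operations generate $\mathrm{GL}_r(\mathbb{F}_p)$, every ordered basis of the span $V=\langle\mathbf{e}_1,\ldots,\mathbf{e}_r\rangle\le\mathbb{F}_p^{p-1}$ occurs as the tuple of defining vectors of such a generating set. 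Moreover, by Lemma~\ref{dagger} a conjugation by a suitable $f$ of the prescribed form replaces $V$ by $V':=\sigma_k(V)$, where $\sigma_k\colon(e_x)_x\mapsto(\ell\,e_{kx\bmod p})_x$ is the rescaled coordinate permutation given by a unit $k$ with $k\ell\equiv 1$, and by the conclusion $\tilde e_{1,1}\equiv 1$ of that lemma one may do this so that $V'$ is not contained in the hyperplane $\{e_1=0\}$. Since $a^f$ is a power of $a$, it thus suffices to produce, for a suitable $k$, an ordered basis of $V'$ satisfying~(1)--(4): letting $\widetilde{b}_i$ be the corresponding word in $b_1^{f},\ldots,b_r^{f}$ then yields the multi-edge spinal group $G_{\widetilde{\mathbf{E}}}=G^f$.

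Call $k\in\{2,\ldots,p-2\}$ a \emph{witness} for $\mathbf{e}$ if $e_{k-1}e_{k+1}\not\equiv e_k^2$, and $\mathbf{e}$ \emph{admissible} if it has a witness; conditions~(3)(a) and~(4) say exactly that each $\widetilde{\mathbf{e}}_i$ is admissible. For $p=3$ there are no candidate indices at all, so the notion is vacuous and that case needs separate treatment. For $p\ge 5$ I would classify the inadmissible nonzero vectors by running through the relations $e_{k-1}e_{k+1}\equiv e_k^2$ while tracking the least index $j$ with $e_j\neq 0$: if $e_1=0$ then necessarily $j=p-1$; if $e_1\neq 0$ but $e_2=0$ then $\mathbf{e}$ is supported on the first and last positions; and if $e_1,e_2$ are both nonzero then $\mathbf{e}=e_1(1,t,t^2,\ldots,t^{p-2})$ for some $t\not\equiv 0$. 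Hence the inadmissible vectors form $B=U\cup\bigcup_{t\in\mathbb{F}_p^{\times}}W_t$, where $U$ is the two-dimensional coordinate subspace on the first and last positions and $W_t=\langle(1,t,\ldots,t^{p-2})\rangle$; a Vandermonde determinant shows that any two-dimensional subspace contains at most two of the lines $W_t$ with $t\neq 0$.

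Now I would argue case by case. If $p=3$ then $r=2$ (linear independence forces $r\le p-1$), so $V=\mathbb{F}_3^2$ and $\widetilde{\mathbf{e}}_1=(1,0)$, $\widetilde{\mathbf{e}}_2=(1,1)$ is a basis giving~(1) and~(2). If $r\ge 3$ then $p\ge 5$; after the normalization of Lemma~\ref{dagger} the affine slice $\mathcal{A}:=V'\cap\{e_1=1\}$ has dimension $r-1\ge 2$ over $\mathbb{F}_p$, and since $\mathcal{A}\cap U$ lies on an affine line while $\mathcal{A}$ meets each $W_t$ in at most one point, $|\mathcal{A}\cap B|\le 2p-1$; the estimate $p^{r-2}(p-1)>2p-1$ (valid for $r\ge 3$, $p\ge 5$) then allows a greedy choice of $r$ affinely independent admissible points of $\mathcal{A}$, which form a basis of $V'$ giving~(1) and~(4). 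Finally, if $r=2$ and $p\ge 5$: if $V'=U$ (which, given the normalization, happens precisely when $V$ is a coordinate twist of $U$), take $\widetilde{\mathbf{e}}_1=(1,0,\ldots,0)$ and $\widetilde{\mathbf{e}}_2=(1,0,\ldots,0,1)$, giving~(1) and~(3)(b); otherwise $V'\neq U$, so the affine line $\mathcal{A}:=V'\cap\{e_1=1\}$ carries at most three inadmissible points --- at most one on the line $V'\cap U$ and at most two on the $W_t\subseteq V'$ with $t\neq 0$ --- hence at least $p-3\ge 2$ admissible ones, any two of which span $V'$ and give~(1) and~(3)(a).

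The technical heart is the classification of inadmissible vectors in the second step: the propagation through the quadratic relations is delicate exactly at the boundary indices $k=2$ and $k=p-2$, where a vanishing first or last coordinate blocks the geometric-progression pattern, and it is this phenomenon that produces the plane $U$ and hence the genuine exception~(3)(b). The remaining points --- that Lemma~\ref{dagger} can be arranged to deliver $V'=U$ rather than merely a coordinate twist of it in the exceptional $r=2$ subcase, and the elementary counting inequalities --- are routine.
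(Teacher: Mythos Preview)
Your argument is correct and takes a genuinely different route from the paper's. The paper proceeds by explicit row reduction: it brings the $r\times(p-1)$ matrix of defining vectors to reduced row-echelon form, adds the first row to the others to force the first column to be all ones, and then modifies individual rows by hand, identifying two local ``patterns'' $(*\,x\,y\,0\,*)$ and $(*\,0\,y\,x\,*)$ (with $y\not\equiv 0$) whose presence in a row guarantees a witness; the $r=2$ case is a separate, similarly explicit computation. Your approach instead classifies the non-witnessing vectors globally as $U\cup\bigcup_{t\ne 0}W_t$ and then counts: the Vandermonde argument bounds how many geometric-progression lines a subspace can contain, and a pigeonhole/greedy step produces enough admissible basis vectors in the affine slice $\{e_1=1\}$. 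Your method is cleaner and explains structurally why the exception~(3)(b) arises (it is exactly the case $V'=U$), at the cost of being non-constructive; the paper's method is longer but hands you explicit generators.

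Two small remarks. First, your parenthetical ``which, given the normalization, happens precisely when $V$ is a coordinate twist of $U$'' overstates things, since Lemma~\ref{dagger} fixes a particular $k$; but this is harmless, because your case split is simply on whether the resulting $V'$ equals $U$ or not, and you handle both cases. Second, the worry you flag at the end---arranging $V'=U$ rather than a twist of it---is not actually needed: if $V'\neq U$ after Lemma~\ref{dagger}, your counting already delivers~(3)(a) regardless of whether some other twist would have given~$U$.
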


\begin{proof}
  We split the proof into two cases: $r\ge 3$ and $r=2$.

  \smallskip

  \noindent \underline{Case 1}: $r \ge 3$.  Observe that $p\ge 5$ and
  consider the $r \times (p-1)$-matrix
  \[
  M(\mathbf{E})= \left( \begin{array}{ccc}
      e_{1,1} & \ldots & e_{1,p-1} \\
      e_{2,1} & \ldots & e_{2,p-1} \\
      \vdots & \ddots & \vdots \\
      e_{r,1} & \ldots & e_{r,p-1} \end{array} \right)
  \]
  encoding the defining vectors for the group $G_\mathbf{E}$.  By
  Lemma~\ref{dagger}, we may assume that $e_{1,1}\not \equiv 0$.
  Using elementary row operations, we transform $M(\mathbf{E})$ into
  reduced row-echelon form:
  \[
  \left( \begin{array}{ccccccccccccc}
      1      & a_1      & \ldots & a_m       & 0 & *       & \ldots & *       & 0 & *      & \ldots & * \\
      0      & 0 & \ldots & 0       & 1 & *       & \ldots & *       & 0 & *      & \ldots & * \\
      \vdots & \vdots &        & \vdots  &   & \vdots  &        & \vdots  &   & \vdots &        & \vdots \\
      0 & 0 & \ldots & 0 & 0 & 0 & \ldots & 0 & 1 & * & \ldots &
      * \end{array} \right),
  \]
  where $m \geq 0$, $a_1, \ldots, a_m \in \mathbb{Z}/p\mathbb{Z}$ and
  the symbols $*$ denote other, unspecified elements of
  $\mathbb{Z}/p\mathbb{Z}$.  Adding the 1st row to all other rows, we
  obtain
  \begin{equation}\label{eq:matrix}
    M(\widetilde{\mathbf{E}})=\left( \begin{array}{ccccccccccccc}
        1      & a_1      & \ldots & a_m      & 0 & *      & \ldots & *      & 0 & *      & \ldots & * \\
        1      & a_1      & \ldots & a_m      & 1 & *      & \ldots & *      & 0 & *      & \ldots & * \\
        \vdots & \vdots &        & \vdots &   & \vdots &        & \vdots &   & \vdots &        & \vdots \\
        1      & a_1      & \ldots & a_m      & 0 & *      & \ldots & *      & 1 & *      & \ldots & * \end{array} \right).
  \end{equation}
  The row operations that we carried out yield a new set of generators
  for $\langle b_1, \ldots, b_r \rangle$, corresponding to an
  $r$-tuple $\widetilde{\mathbf{E}}$ of defining vectors that are
  encoded in the rows of~$M(\widetilde{\mathbf{E}})$.

  Let $i \in \{1,\ldots, r\}$ and consider the $i$th row
  of~$M(\widetilde{\mathbf{E}})$.  We identify two patterns which
  guarantee that the $i$th row satisfies the condition in~(4):
  \begin{align*}
    \text{(A)} & \qquad (* \,\, \ldots \,\, * \,\, x \,\, y \,\, 0
    \,\, * \,\, \ldots \,\, *), \\
    \text{(B}) & \qquad (* \,\, \ldots \,\, * \,\, 0 \,\, y \,\, x
    \,\, * \,\, \ldots \,\, *),
  \end{align*}
  where $x,y \in \mathbb{Z}/p\mathbb{Z}$ with $y\not \equiv 0$ and the
  symbols $*$ again denote unspecified elements.  Observe that, if the
  patterns (A) and (B) do not appear in the $i$th row, then the row
  does not have any zero entries at all or must be of the form $(* \,\, 0
  \,\, \ldots \,\, 0 \,\, *)$.

  Suppose first that $2 \le i \le r-1$. In this case the $i$th row
  contains at least one zero entry and cannot be of the form $(* \,\,
  0 \,\, \ldots \,\, 0 \,\, *)$.  Hence the pattern (A) or (B) occurs.

  Next suppose that $i = r$ and assume that patterns (A) or (B) do not
  appear.  As $r \ge 3$ the $r$th row contains at least one zero entry
  and consequently has the form $(1 \,\, 0 \,\, \ldots \,\, 0 \,\,
  1)$.  Changing generators, we may replace the $r$th row by the $r$th
  row minus the $2$nd row plus the $1$st row, yielding
  \begin{equation} \label{eq:r} \left( \begin{array}{ccccccccc} 1 & 0
        & \ldots & 0 & -1 & * & \ldots & * &1 \end{array} \right)
  \end{equation}
  with $m$ zeros between the entries $1$ and $-1$.  If $m>0$ then
  pattern (B) occurs in this new row.  Suppose that $m=0$.  Then the
  row takes the form
  \begin{equation}\label{eq:contra}
    \left( \begin{array}{cccccc}
        1 & -1 & * & \ldots & * & 1  \end{array} \right).
  \end{equation}
  For the condition in (4) to fail, we would need the row to be equal
  to $(1 \,\, -1 \,\, \phantom{-}1 \,\, -1 \,\, \ldots \,\, 1 \,\,
  -1)$ with the final entry being $-1$ as $p-1$ is even. This
  contradicts~\eqref{eq:contra}.

  Finally, suppose that $i=1$.  Similarly as above, we assume that
  patterns (A) and (B) do not occur.  Since it contains at least one
  zero entry, the $1$st row is of the form
  \begin{equation} 
    \left( \begin{array}{ccccc}
        1 & 0 & \ldots & 0 & * \end{array} \right)
  \end{equation}
  and we change generators as follows.  Generically, we replace the
  $1$st row by the $1$st row plus the $2$nd row minus the $3$rd row.
  Only if $r=3$ and we already changed the $r$th row as described
  above, we replace the $1$st row by $2$ times the $1$st row minus the
  $3$rd row.  In any case, this gives a new $1$st row:
  \[
  \left( \begin{array}{cccccccccccc} 1 & 0 & \ldots & 0 & 1 & * &
      \ldots & * & -1 & * & \ldots & * \end{array} \right)
  \]
  with $m$ zeros between the entries $1$ and~$1$.  If $m >0$ then
  pattern (B) occurs.  Suppose that $m=0$ so that the new row takes
  the form
  \begin{equation} \label{eq:contra2} \left( \begin{array}{ccccccccc}
        1 & 1 & * & \ldots & * & -1 & * & \ldots & * \end{array}
    \right).
  \end{equation}
  For the condition in~(4) to fail, the row would have to be of the
  form $(1 \,\, 1 \,\, \ldots \,\, 1)$
  contradicting~\eqref{eq:contra2}.

  \smallskip

  \noindent \underline{Case 2}: $r = 2$.  The statement in (2) for
  $p=3$ can clearly be achieved by a simple change of generators.  Now
  we suppose that $p>3$.  By Lemma~\ref{dagger}, we may assume that
  $e_{1,1}\not \equiv 0$.  Using elementary row operations, we
  transform the $2 \times (p-1)$-matrix $M(\mathbf{E})$ encoding the
  defining vectors into reduced row-echelon form
  \[
  \left( \begin{array}{cccc}
      1 & \mathbf{a}  & 0 & \mathbf{b} \\
      0 & \mathbf{0} & 1 & \mathbf{c} \end{array} \right),
  \]
  where at most one of $\left( \begin{array}{c} \mathbf{a} \\
      \mathbf{0} \end{array} \right)$ or $\left( \begin{array}{c}
      \mathbf{b} \\ \mathbf{c} \end{array} \right)$ could be the empty
  matrix.  Further row operations, corresponding to multiplication on
  the left by $\left( \begin{array}{cc} 1 & y \\ 1 & z \end{array}
  \right)$, where $y,z \in \mathbb{Z}/p\mathbb{Z}$ with $y \not \equiv
  z$ are to be specified below, yield
  \begin{equation} \label{eq:lambda} M(\widetilde{\mathbf{E}})=
    \left( \begin{array}{cccc}
        1 & \mathbf{a}  & y & \mathbf{b}+y\mathbf{c} \\
        1 & \mathbf{a} & z & \mathbf{b}+z\mathbf{c} \end{array}
    \right)
  \end{equation}
  encoding an $r$-tuple $\widetilde{\mathbf{E}}$ of defining vectors
  for a new set of generators.  

  First suppose that $\mathbf{a} = \mathbf{0}$ is not empty and zero.
  If $\mathbf{b} = ()$ then
  \[
  M(\widetilde{\mathbf{E}})= \left( \begin{array}{ccccc}
      1 & 0 & \ldots & 0 & y \\
      1 & 0 & \ldots & 0 & z \end{array} \right),
  \]
  leads to~(3)(b). Otherwise, if $\mathbf{b} \ne ()$, we choose $y
  \equiv 1$ and $z \equiv -1 \pmod{p}$, yielding pattern (B) in both
  rows so that the condition in~(3)(a) holds.

  Next suppose that $\mathbf{a} = (a_1 \,\, \ldots \,\, a_m) \ne
  \mathbf{0}$ is not empty and non-zero.  Suppose further that the
  truncated rows $(1 \,\, \mathbf{a} \,\, y)$, $(1 \,\, \mathbf{a}
  \,\, z)$ do not yet satisfy the condition in~(3)(a).  Then pattern
  (B) does not occur in these and $\mathbf{a}$ cannot have any zero
  entries.  Consequently, there exists $\lambda\in
  \mathbb{Z}/p\mathbb{Z} \setminus \{0\}$ such that
  $M(\widetilde{\mathbf{E}})$ is of the form
  \[
  M(\widetilde{\mathbf{E}})= \left( \begin{array}{ccccccccc} 1 &
      \lambda & \lambda ^2 & \ldots & \lambda ^m & y & * & \ldots
      & * \\
      1 & \lambda & \lambda ^2 & \ldots & \lambda ^m & z & * & \ldots
      & * \end{array} \right).
  \]
  As $p>3$, we can choose $y,z \in \mathbb{Z}/p\mathbb{Z}$ with $y
  \not \equiv z$ and $y,z \not \equiv \lambda ^{m+1} \pmod{p}$ so that
  the condition in (3)(a) is satisfied.

  Finally suppose that $\mathbf{a} = ()$.  Then
  \[
  M(\widetilde{\mathbf{E}}) = \left( \begin{array}{cccccc}
      1 & y & b_1+yc_1 & * & \ldots & * \\
      1 & z & b_1+zc_1 & * & \ldots & *\end{array} \right)
  \]
  for suitable $b_1,c_1 \in \mathbb{Z}/p\mathbb{Z}$.  We can choose
  $y,z \in \mathbb{Z}/p\mathbb{Z}$ with $y\not \equiv z$ such that
  \[
  y^2 \not \equiv b_1 +yc_1 \quad \text{and} \quad z^2 \not \equiv b_1
  +zc_1 \pmod{p},
  \]
  because quadratic equations have at most two solutions and $p>3$.
  Once more, the condition in (3)(a) is fulfilled.
\end{proof}

The next result mimics~\cite[Lemma 3.2]{Amaia}, which applies to
GGS-groups.  We remark that there are no new exceptions, in addition
to the GGS-group
\begin{equation}\label{eq:exception}
  \mathcal{G}= \langle a,b \rangle \quad \text{with} \quad \psi_1(b)
  = (a,a,\ldots,a,b),
\end{equation}
arising from a constant defining vector~$(1,\ldots,1)$.

\begin{proposition}
  \label{Amaia-gamma}
  Let $G = \langle a, b_1, \ldots, b_r \rangle$ be a multi-edge spinal
  group that is not conjugate to $\mathcal{G}$ in $\Aut(T)$.  Then
  \[
  \psi_1(\gamma_3(\stab_G(1)))=\gamma_3(G)\times \overset{p}{\ldots}
  \times \gamma_3(G).
  \]
  In particular,
  \[
  \gamma_3(G) \times \overset{p}{\ldots} \times \gamma_3(G) \subseteq
  \psi_1(\gamma_3(G)).
  \]
\end{proposition}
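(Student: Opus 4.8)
The displayed identity will be proved by two inclusions, and the ``in particular'' clause then follows at once: since $G/\stab_G(1)\cong C_p$ is abelian we have $\gamma_3(G)\le\gamma_2(G)\le\stab_G(1)$, so $\psi_1$ is defined on $\gamma_3(G)$, while $\gamma_3(\stab_G(1))\le\gamma_3(G)$ because $\stab_G(1)\le G$; hence $\gamma_3(G)\times\overset{p}{\ldots}\times\gamma_3(G)=\psi_1(\gamma_3(\stab_G(1)))\le\psi_1(\gamma_3(G))$. For the inclusion ``$\subseteq$'' in the displayed identity, Proposition~\ref{transitive} tells us that every section of every element of $G$ lies in $G$, so $\psi_1$ restricts to a monomorphism $\stab_G(1)\hookrightarrow G\times\overset{p}{\ldots}\times G$; being a homomorphism, it carries the characteristic subgroup $\gamma_3(\stab_G(1))$ into $\gamma_3(G\times\overset{p}{\ldots}\times G)=\gamma_3(G)\times\overset{p}{\ldots}\times\gamma_3(G)$.

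It remains to prove $\gamma_3(G)\times\overset{p}{\ldots}\times\gamma_3(G)\subseteq K:=\psi_1(\gamma_3(\stab_G(1)))$. For $h\in\stab_G(1)$ with $\psi_1(h)=(h_1,\dots,h_p)$ one has $\psi_1(h^a)=(h_p,h_1,\dots,h_{p-1})$; since $\gamma_3(\stab_G(1))\trianglelefteq G$, the subgroup $K$ is invariant under the cyclic permutation of the $p$ coordinates, so it is enough to show $K\supseteq\gamma_3(G)\times 1\times\cdots\times 1$ (with $p-1$ trivial factors). As $g\mapsto(g,1,\dots,1)$ is a homomorphism $\gamma_3(G)\to G^p$ and $\gamma_3(G)$ is generated by the left-normed triple commutators $[x,y,z]$ with $x,y,z\in G$ (a set closed under conjugation in $G$), it suffices to realise each such commutator in a single coordinate as the $\psi_1$-image of an element of $\gamma_3(\stab_G(1))$; the commutator calculus then reduces this further to commutators in the generators $a,b_1,\dots,b_r$. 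Since neither hypothesis nor conclusion is affected by replacing $G$ by a conjugate $G^f$ with $f$ of the form in Lemma~\ref{dagger} --- such a conjugation alters $\psi_1$ only by a coordinate permutation followed by coordinatewise conjugation --- we may, by Lemmas~\ref{dagger} and~\ref{linearalgebra}, assume the defining vectors of $G$ have the normalised shape provided there; in particular that every row $\tilde{\mathbf e}_i$, outside the explicit low-rank/small-prime exceptions, has a ``non-geometric window'' $\tilde{e}_{i,k-1}\tilde{e}_{i,k+1}\not\equiv\tilde{e}_{i,k}^2\pmod p$ and hence a zero entry.

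The realisation is done by computing in $\stab_G(1)$ from $\psi_1(b_i)=(a^{e_{i,1}},\dots,a^{e_{i,p-1}},b_i)$ and its shifts $\psi_1(b_i^{a^j})$, exploiting that a commutator of two powers of $a$ is trivial. Thus for $1\le j\le p-1$ the element $[b_i,b_{i'}^{a^j}]\in\gamma_2(\stab_G(1))$ has $\psi_1$-image supported on just the coordinates $j$ and $p$, with entry $[a^{e_{i,j}},b_{i'}]$ in coordinate $j$ and $[b_i,a^{e_{i',p-j}}]$ in coordinate $p$; choosing $j$ with $e_{i,j}=0$ (available by the normalisation) kills the first, leaving a single-coordinate element with value a commutator $[b_i,a^{c}]\in\gamma_2(G)$. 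One further commutator with $b_k^{a^{j'}}$ (including $j'=p$, i.e.\ with $b_k$) keeps the support confined to that one coordinate and pushes the value into $\gamma_3(G)$, giving single-coordinate elements of $K$ with values $[[b_i,a^{c}],b_k]$ or $[[b_i,a^{c}],a^{c'}]$, and iterating produces longer left-normed commutators in the same coordinate. One then checks that, as $i,i',k,j,j'$ and the zero-positions range over the possibilities left open by Lemma~\ref{linearalgebra}, the resulting values --- together with the coordinate shifts already in $K$ --- generate all of $\gamma_3(G)$. The two genuinely exceptional configurations ($p=3$, $r=2$; and $p>3$, $r=2$ with $\tilde{\mathbf e}_1=(1,0,\dots,0)$, $\tilde{\mathbf e}_2=(1,0,\dots,0,1)$) are disposed of by the same kind of explicit computation, while the excluded group $\mathcal{G}$ is exactly the one for which every such commutator collapses to a power of $[a,b]$, so that $K$ becomes a ``diagonal'' subgroup and the identity fails; the case $r=1$ recovers the GGS-group statement of~\cite[Lemma~3.3.1]{Amaia}.

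I expect the real work to be concentrated in the last part of the realisation step: showing that the single-coordinate commutators one is actually able to produce generate the whole of $\gamma_3(G)$ and not merely a proper (normal) subgroup, and arranging the bookkeeping so that this conclusion holds uniformly across all cases left by Lemma~\ref{linearalgebra} --- with the small-prime, low-rank, and ``nowhere-zero defining vector'' configurations handled individually and the exclusion of $\mathcal{G}$ pinned down as the unique obstruction.
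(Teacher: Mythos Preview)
Your overall architecture is correct and matches the paper: the easy inclusion, the reduction to a single coordinate via the cyclic action of $a$, the normalisation via Lemmas~\ref{dagger} and~\ref{linearalgebra}, and the plan to realise the normal generators $[a,b_i,a]$, $[a,b_i,b_j]$ of $\gamma_3(G)$ as first-coordinate values of explicit triple commutators in $\stab_G(1)$.

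The gap is in the realisation step. You assert that a non-geometric window $\tilde e_{i,k-1}\tilde e_{i,k+1}\not\equiv\tilde e_{i,k}^2$ ``hence'' gives a zero entry in $\tilde{\mathbf e}_i$, and your construction then rests on choosing $j$ with $e_{i,j}=0$ to kill one of the two supported coordinates of $[b_i,b_{i'}^{a^j}]$. But the window condition does not force any entry to vanish (for $p=5$ the row $(1,1,2,1)$ has a window at $k=2$ and no zeros), and Lemma~\ref{linearalgebra} guarantees nothing more than the window. So whenever $e_{i,p-1}\not\equiv 0$ and the row is nowhere zero, your single-coordinate $\gamma_2$-element never materialises and the argument stalls; this is not a boundary case but the generic situation for such rows. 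The missing idea is that one should use the window not to locate a zero but to \emph{manufacture} a trivial last coordinate by cancellation: with $k$ as in Lemma~\ref{linearalgebra}, set $g_{i,k}=(b_i^{a^{p-k+1}})^{e_{i,k}}(b_i^{a^{p-k}})^{-e_{i,k-1}}$, whose $p$th coordinate is $a^{e_{i,k-1}e_{i,k}-e_{i,k-1}e_{i,k}}=1$ while its first coordinate is $a^{e_{i,k}^2-e_{i,k-1}e_{i,k+1}}\ne 1$. A power $g_i$ of this has $\psi_1(g_i)=(a,*,\ldots,*,1)$, and from it one builds $h_i$ with $\psi_1(h_i)=(b_i,*,\ldots,*,1)$; commuting $[b_i,b_i^a]$ (support $\{1,p\}$) with $g_i$ or $h_j$ then kills coordinate $p$ and produces exactly $([a,b_i,a],1,\ldots,1)$ and $([a,b_i,b_j],1,\ldots,1)$. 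With this mechanism in hand, the case $e_{i,p-1}\equiv 0$ goes essentially as you sketch (take $j=1$), and the residual exceptions $(r,p)=(2,3)$ and the pair in Lemma~\ref{linearalgebra}(3)(b) are disposed of by short explicit computations.
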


\begin{proof}
  From $\psi_1(\stab_G(1))\subseteq G\times \overset{p}{\ldots} \times
  G$, we deduce that $\psi_1(\gamma_3(\stab_G(1)))\subseteq
  \gamma_3(G)\times \overset{p}{\ldots} \times \gamma_3(G)$, and so it
  suffices to prove the reverse inclusion.  For $r=1$, the result has
  been proved in~\cite[Lemma~3.2]{Amaia}.  Hence suppose that $r\ge
  2$. For convenience, we prove the result in the isomorphic setting
  given by Lemma~\ref{linearalgebra}. That is, since
  \[
  \psi_1(\gamma_3(\stab_G(1)))^f =
  \psi_1(\gamma_3(\stab_G(1))^f)=\psi_1(\gamma_3(\stab_{G^f}(1)))
  \]
  and
  \[
  (\gamma_3(G)\times \ldots \times \gamma_3(G))^f = \gamma_3(G^f)
  \times \ldots \times \gamma_3(G^f), 
  \]
  for $f$ as in Lemma~\ref{linearalgebra}, we may assume that
  $\psi_1(b_i)=(a^{e_{i,1}},\ldots,a^{e_{i,p-1}},b_i)$ with
  $e_{i,1}=1$, for $i \in \{1,\ldots,r\}$, and that the additional
  assertions (2), (3), (4) of Lemma~\ref{linearalgebra} hold.
  Moreover, by Proposition~\ref{transitive} the group $G$ acts
  spherically transitively, hence it suffices to show that
  \[
  \gamma_3(G) \times 1 \times \ldots \times 1 \subseteq
  \psi_1(\gamma_3(\stab_G(1)).
  \]
  We divide the argument into two cases.

  \smallskip

  \noindent \underline{Case 1}: $(r,p) \ne (2,3)$.  By
  Proposition~\ref{transitive}, the group $G$ is fractal.  Since
  $\gamma_3(G)=\langle [a,b_i,a],[a,b_i,b_j] \mid 1 \le i,j \le r
  \rangle^G$, it suffices to construct elements in
  $\gamma_3(\stab_G(1))$ whose images under $\psi_1$ yield
  \[
  ([a,b_i,a],1,\ldots,1) \quad \text{and} \quad
  ([a,b_i,b_j],1,\ldots,1), \qquad 1 \le i,j \le r.
  \] 

  First suppose that $i\in \{1,\ldots,r\}$ with $e_{i,p-1} \equiv 0
  \pmod{p}$.  Then
  \[
  \psi_1(b_i)=(a,a^{e_{i,2}},\ldots,a^{e_{i,p-2}},1,b_i).
  \]
  Noting that $[b_i,b_j^a]=(*,1,\ldots,1,*)$ for $j \in
  \{1,\ldots,r\}$, where the symbols $*$ denote unspecified entries and the second $*$ equals 0 if $j=i$,
  we deduce that
  \begin{align*}
    \psi_1([b_i,b_i^a,b_i]) & =([a,b_i,a],1,\ldots,1), \\
    \psi_1([b_i,b_i^a,b_i^a]) & =([a,b_i,b_i],1,\ldots,1),
  \end{align*}
  and for $j \in \{1,\ldots,r\}$ with $j\ne i$,
  \begin{align*}
    \psi_1([b_i,b_i^a,b_j^a]) & =([a,b_i,b_j],1,\ldots,1), \\
    \psi_1([b_i,b_j^a,b_i^a]) & =([a,b_j,b_i],1,\ldots,1).
  \end{align*}

  Next suppose that $i \in \{1,\ldots,r\}$ with $e_{i,p-1}\not \equiv
  0 \pmod{p}$. By properties (3) and (4) in Lemma~\ref{linearalgebra},
  there exists $k\in \{2,\ldots,p-2\}$ such that $e_{i,k-1} e_{i,k+1}
  \not \equiv e_{i,k}^2 \pmod{p}$, apart from an exceptional case
  which only occurs for $r=2$ and which we deal with separately below.
  Set
  \[
  g_{i,k}=(b_i^{a^{p-k+1}})^{e_{i,k}} (b_i^{a^{p-k}})^{-e_{i,k-1}}
  \]
  so that
  \[
  \psi_1(g_{i,k})=(a^{e^2_{i,k} - e_{i,k-1}e_{i,k+1}},*,\ldots,*,1).
  \]
  Since $e^2_{i,k} - e_{i,k-1} e_{i,k+1} \not \equiv 0 \pmod{p}$,
  there is a power $g_i$ of $g_{i,k}$ such that
  \[
  \psi_1(g_i)=(a,*,\ldots,*,1).
  \]
  Additionally, since
  \[
  \psi_1(b_i^a (b_i^{a^{p-1}})^{-e_{i,p-1}}) =(b_i a^{-e_{i,2}
    e_{i,p-1}},*,\ldots,*, 1),
  \]
  with the help of $g_i$ we get an element $h_i \in \stab_G(1)$ such
  that
  \[
  \psi_1(h_i)=(b_i,*,\ldots,*,1).  
  \]
  Consequently, we obtain
  \begin{align*}
    \psi_1([b_i,b_i^a,g_i]) &=([a,b_i,a],1,\ldots,1) \\
    \psi_1([b_i,b_i^a,h_i]) &=([a,b_i,b_i],1,\ldots,1),
  \end{align*}
  and for $j \in \{1,\ldots,r\}$ with $j\ne i$,
  \[
  \psi_1([b_i,b_j^a,h_i]) =([a,b_j,b_i],1,\ldots,1)
  \]
  and
  \begin{align*}
    \psi_1([b_i,b_i^a,h_j]) & =([a,b_i,b_j],1,\ldots,1) & & \text{if
      $e_{j,p-1} \not \equiv 0$,} \\
    \psi_1([b_i,b_i^a,b_j^a])& =([a,b_i,b_j],1,\ldots,1) & & \text{if
      $e_{j,p-1} \equiv 0$ (as in the previous part)}.
  \end{align*}
  Thus we have constructed all necessary elements.

  It remains to deal with the exceptional case which occurs only for
  $r=2$, and hence $p>3$.  According to property (3b) in
  Lemma~\ref{linearalgebra} we have
  \[
  e_1 =(1, 0, \ldots, 0), \quad e_2=(1, 0 , \ldots , 0 ,1),
  \]
  so that
  \[
  b_1=(a,1,\ldots,1,1,b_1), \quad b_2=(a,1,\ldots,1,a,b_2).
  \]
  We simply replace $g_2$ in the above argument by $b_2^{a^2} =
  (a,b_2,a,1,\ldots,1)$, and then proceed similarly.

  \smallskip

  \noindent \underline{Case 2}: $(r,p) = (2,3)$.  Here $G'=\gamma_2(G)=\langle
  [a,b_1], [a,b_2]\rangle ^G$ and as $\langle ab_2, b_2a^{-1}\rangle =
  \langle b_2^2 ,b_2 a^{-1}\rangle = \langle a,b_2 \rangle$, we have
  \[
  \gamma_3(G)=\langle [a,b_1,a], [a,b_1,b_1], [a,b_1,b_2],
  [a,b_2,b_1], [a,b_2,ab_2], [a,b_2,b_2 a^{-1}] \rangle ^G.
  \]
  By property (2) in Lemma~\ref{linearalgebra}, we may assume
  $\mathbf{e}_1=(1,0)$ and $\mathbf{e}_2=(1,-1)$ so that
  \[
  \psi_1(b_1) =(a,1,b_1) \quad \text{and} \quad \psi_1(b_2)
  =(a,a^{-1},b_2).
  \]
  Arguing as in the previous case, it suffices to manufacture, in
  addition to the elements already constructed there, elements of
  $\gamma_3(\stab_G(1))$ whose images under $\psi_1$ are
  $([a,b_2,ab_2],1,1)$ and $([a,b_2,b_2 a^{-1}],1,1)$.  We compute
  \[
  \psi_1(b_2 b_2^{a}) =(ab_2,1,b_2a^{-1}) \quad \text{and} \quad
  \psi_1(b_2^a b_2^{a^2}) =(b_2 a^{-1},ab_2,1),
  \]
  yielding
  \begin{align*}
    \psi_1([b_2,b_2^a,b_2^a b_2^{a^2}]) &= ([a,b_2,b_2a^{-1}],1,1), \\
    \psi_1([b_2^{-a^2},b_2^a,b_2 b_2^a]) &= ([a,b_2,ab_2],1,1).
  \end{align*}
\end{proof} 

The following consequence paves the way to proving branchness.

\begin{proposition} \label{g3inR} Let $G$ be a multi-edge spinal group
  that is not $\Aut(T)$-conjugate to the GGS-group $\mathcal{G}$ in
  \eqref{eq:exception}. Then $\gamma_3(G) \subseteq \rstab_G(u)_u$ for
  every vertex $u$ of~$T$, after the natural identification of
  subtrees.
\end{proposition}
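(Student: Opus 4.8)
The plan is to deduce Proposition~\ref{g3inR} from the following slightly stronger assertion, proved by induction on the level $n = |v|$: for every vertex $v$ of $T$ and every $g \in \gamma_3(G)$ there is an element $h \in \gamma_3(G)$ with $h \in \rstab_G(v)$ and $h|_{T_v} = g$ after the natural identification $T_v \cong T$. Granting this, every $g \in \gamma_3(G)$ lies in $\rstab_G(v)_v$, which is exactly what the proposition asserts; note that the statement is in fact obtained for each vertex individually, so spherical transitivity is not even needed for the deduction.

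The base case $n = 0$ is trivial: here $v = \varnothing$, $\rstab_G(\varnothing) = G$, and we take $h = g$. For the inductive step, let $v$ have level $n+1$ and write $v = xw$, where $x \in X$ is the first letter of $v$ and $w \in \overline{X}$ has length $n$; under the natural identification $T_{u_x} \cong T$ the subtree $T_v$ corresponds to $T_w$. By the inductive hypothesis there is $h' \in \gamma_3(G)$ with $h' \in \rstab_G(w)$ and $h'|_{T_w} = g$. Since $G$ is not $\Aut(T)$-conjugate to $\mathcal{G}$, Proposition~\ref{Amaia-gamma} gives $\psi_1(\gamma_3(\stab_G(1))) = \gamma_3(G) \times \overset{p}{\ldots} \times \gamma_3(G)$; as $h' \in \gamma_3(G)$, the $p$-tuple with $h'$ in the coordinate $x$ and $1$ elsewhere has a preimage $h \in \gamma_3(\stab_G(1)) \subseteq \gamma_3(G)$, so that $\psi_1(h) = (1, \ldots, 1, h', 1, \ldots, 1)$.

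It remains to verify that this $h$ works. The coordinates of $\psi_1(h)$ indexed by the level-$1$ vertices $u_y$ with $y \neq x$ are trivial, so $h$ fixes every vertex lying outside $T_{u_x}$; and on $T_{u_x} \cong T$ the automorphism $h$ acts as $h' \in \rstab_G(w)$, which fixes every vertex not having $w$ as a prefix. Consequently $h$ fixes every vertex of $T$ not having $v$ as a prefix, that is, $h \in \rstab_G(v)$; and the restriction of $h$ to $T_v$ is the restriction of $h'$ to $T_w$, namely $g$. This closes the induction, and the proposition follows.

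The whole argument is essentially bookkeeping once Proposition~\ref{Amaia-gamma} is available. The only point that requires a little care — and what dictates the precise shape of the inductive hypothesis — is that the localized element produced at level $n$ must itself lie in $\gamma_3(G)$, and not merely in $G$, so that Proposition~\ref{Amaia-gamma} can be applied again to push it one level deeper; this is automatic because $\gamma_3(\stab_G(1)) \subseteq \gamma_3(G)$. The hypothesis excluding $\mathcal{G}$ enters the proof only through Proposition~\ref{Amaia-gamma}.
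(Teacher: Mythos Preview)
Your proof is correct and follows essentially the same approach as the paper's: induction on the level of the vertex, with the inductive step supplied by Proposition~\ref{Amaia-gamma}. The only cosmetic differences are that you peel off the \emph{first} letter of the vertex and apply Proposition~\ref{Amaia-gamma} at the root (pushing a localized element one level deeper), whereas the paper peels off the \emph{last} letter and applies Proposition~\ref{Amaia-gamma} inside $\rstab_G(v)_v$ after the identification $T_v \cong T$; and you phrase the induction element-by-element while the paper phrases it at the level of subgroup inclusions. Your explicit tracking of the condition $h \in \gamma_3(G)$ makes transparent exactly the point the paper leaves implicit in the chain $\gamma_3(G)^p \subseteq \psi_1(\gamma_3(G)) \subseteq \psi_1(\rstab_G(v)_v)$.
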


\begin{proof}
  Let $u$ be a vertex of $T$ at level~$n$. We induct on $n$. If $n=0$
  then $u=\varnothing$ is the root vertex and the claim holds
  trivially.  Now suppose that $n>0$. Writing $u$ as $u=vy$, where $v$
  is a vertex at level $n-1$ and $y \in X$, we conclude by
  induction that $\gamma_3(G) \subseteq \rstab_G(v)_v$. By
  Proposition~\ref{Amaia-gamma},
  \[
  \gamma_3(G) \times \overset{p}{\ldots} \times \gamma_3(G) \subseteq
  \psi_1(\gamma_3(G))\subseteq \psi_1(\rstab_G(v)_v)
  \]
  so that, in particular,
  \[
  1 \times \ldots \times 1 \times \gamma_3(G) \times 1 \times \ldots
  \times 1 \subseteq \psi_1 (\rstab_G(v)_v),
  \]
  where $\gamma_3(G)$ is located at position $u$ in the subtree $T_v$
  rooted at~$v$. Hence $\gamma_3(G)\subseteq \rstab_G(u)_u$.
\end{proof}

\begin{proposition} \label{prop:branch} Let $G$ be a multi-edge spinal
  group that is not $\Aut(T)$-conjugate to the GGS-group $\mathcal{G}$
  in~\eqref{eq:exception}. Then $G$ is a branch group.
\end{proposition}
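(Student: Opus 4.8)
The plan is to verify the two conditions in Definition~\ref{branch}, using the standard branch structure $((L_n^G)_{n},(\rstab_G(n))_{n})$ attached to the given embedding $G\hookrightarrow\Aut(T)$. Condition~(1), spherical transitivity, is already provided by Proposition~\ref{transitive}, so the whole content is condition~(2): that the rigid level stabilizers $\rstab_G(n)$ have finite index in $G$. It suffices to prove this for $n=1$, since $\rstab_G(n+1)$ contains $\prod_{|u|=1}\rstab_G(u)_u$-images of the lower-level rigid stabilizers and one can iterate; more precisely, one shows by induction that finite index at level~$1$ propagates to all levels because $G$ is fractal (Proposition~\ref{transitive}) and because $\psi_1$ maps $\stab_G(1)$ into a finite-index subgroup of $G\times\dots\times G$. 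So the crux is: $[G:\rstab_G(1)]<\infty$.

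First I would dispose of the arithmetic by noting that $[G:\stab_G(1)]=p$ (the rooted generator $a$ acts as a $p$-cycle on the first level and $\langle a\rangle\cap\stab_G(1)=1$), so it is enough to show that $\rstab_G(1)$ has finite index in $\stab_G(1)$. Recall that $\psi_1\colon\stab_G(1)\to G\times\overset{p}{\dots}\times G$ is injective and that, under $\psi_1$, the rigid vertex stabilizer $\rstab_G(u)$ at a first-level vertex~$u$ maps onto $1\times\dots\times N_u\times\dots\times 1$ where $N_u\cong\rstab_G(u)_u$ sits in the $u$-coordinate. Hence $\psi_1(\rstab_G(1))=N_1\times\dots\times N_p$ with each $N_i$ isomorphic to the first lower companion group $L_1^G$. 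The point of the previous propositions is exactly to pin down a large normal subgroup sitting inside every $\rstab_G(u)_u$: by Proposition~\ref{g3inR}, $\gamma_3(G)\subseteq\rstab_G(u)_u$ for every vertex~$u$, and in particular $\gamma_3(G)\times\overset{p}{\dots}\times\gamma_3(G)\subseteq\psi_1(\rstab_G(1))$.

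From this the index computation is quick: since $\psi_1(\stab_G(1))\subseteq G\times\overset{p}{\dots}\times G$ and $\gamma_3(G)^{\times p}\subseteq\psi_1(\rstab_G(1))$, we get
\[
[\stab_G(1):\rstab_G(1)]=[\psi_1(\stab_G(1)):\psi_1(\rstab_G(1))]\le [G\times\overset{p}{\dots}\times G:\gamma_3(G)\times\overset{p}{\dots}\times\gamma_3(G)]=[G:\gamma_3(G)]^{\,p}.
\]
So it remains only to observe that $[G:\gamma_3(G)]$ is finite, i.e.\ that $G/\gamma_3(G)$ is a finitely generated nilpotent group: $G$ is generated by the $r+1$ elements $a,b_1,\dots,b_r$, which have order~$p$, so $G/\gamma_3(G)$ is a finitely generated class-$\le 2$ group with $G/G'$ elementary abelian of rank $\le r+1$, and therefore $\gamma_2(G)/\gamma_3(G)$ is a finitely generated abelian group; being also generated by commutators of the $b_i$-type it is a $p$-group, hence finite. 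Thus $[G:\gamma_3(G)]<\infty$, whence $[G:\rstab_G(1)]<\infty$, and then the inductive step at higher levels gives $[G:\rstab_G(n)]<\infty$ for all~$n$. Condition~(2) of Definition~\ref{branch} holds with $L_n:=L_n^G$, and combined with Proposition~\ref{transitive} this shows $G$ is branch.

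The main obstacle is the propagation from level~$1$ to arbitrary level~$n$: one must check carefully that finiteness of $[G:\rstab_G(1)]$ together with fractality really forces $[G:\rstab_G(n)]<\infty$. The clean way is to show $\psi_1$ restricts to an injection of $\rstab_G(n)$ into $\rstab_G(n-1)^{\times p}$ (up to the natural identification of subtrees) with finite-index image, using that $\psi_1(\rstab_G(1))\supseteq\gamma_3(G)^{\times p}$ has finite index in $G^{\times p}$ and that each coordinate projection of $\psi_1(\stab_G(1))$ is all of $G$ by fractality; an induction on $n$ then closes the argument. Everything else is bookkeeping with the embedding $\psi_1$.
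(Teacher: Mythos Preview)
Your argument is correct, and the ingredients are exactly those of the paper: spherical transitivity from Proposition~\ref{transitive}, the containment $\gamma_3(G)\subseteq\rstab_G(u)_u$ from Proposition~\ref{g3inR}, and finiteness of $G/\gamma_3(G)$. The difference is purely organizational, and it concerns precisely the step you flag as the ``main obstacle''.

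You reduce to level~$1$ and then propose to propagate to level~$n$ by induction, using fractality and the level-$1$ inclusion $\gamma_3(G)^{\times p}\subseteq\psi_1(\rstab_G(1))$. This can be made to work, but it is unnecessary: Proposition~\ref{g3inR} is stated for \emph{every} vertex~$u$, not just first-level ones. The paper exploits this directly. Using $\psi_n$ rather than $\psi_1$, one has $\psi_n(\stab_G(n))\subseteq G^{\times p^n}$, while Proposition~\ref{g3inR} applied to each level-$n$ vertex gives $\gamma_3(G)^{\times p^n}\subseteq\psi_n(\rstab_G(n))$. Hence
\[
[\stab_G(n):\rstab_G(n)] \le [G:\gamma_3(G)]^{p^n} < \infty
\]
for every~$n$ in one stroke, with no induction on levels and no appeal to fractality. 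Your detour through level~$1$ reproves by hand a special case of what Proposition~\ref{g3inR} already provides in general; the ``obstacle'' you identify is an artefact of not using its full strength.
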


\begin{proof}
  In view of Proposition~\ref{transitive} it suffices to show that
  every rigid level stabilizer $\rstab_G(n)$ is of finite index in
  $G$. The nilpotent quotient $G/\gamma_3(G)$ is generated by finitely
  many elements of finite order and hence finite.  Thus $\gamma_3(G)$
  has finite index in $G$. By Proposition~\ref{g3inR}, the image of
  $\rstab_G(n)$ under the maps $\psi_n$ contains the direct product of
  $p^n$ copies of $\gamma_3(G)$. Since the image of any level
  stabilizer $\stab_G(n)$ under the injective map $\psi_n$ is
  contained in the direct product of $p^n$ copies of $G$, we deduce
  that $\rstab_G(n)$ is of finite index in $\stab_G(n)$ and hence in
  $G$.
\end{proof}

\begin{theorem}[{\cite[Theorem 4]{NewHorizons}}]
  \label{justinfinite}
  A branch group $G$ with branch structure
  $((L_n)_{n=1}^\infty,(H_n)_{n=1}^\infty)$ is just infinite if and
  only if for each $n\ge 1$, the index of the commutator subgroup
  $L_n'$ in $L_n$ is finite.
\end{theorem}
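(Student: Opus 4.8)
The plan is to prove both implications by relating an arbitrary normal subgroup of $G$ to the rigid level stabilizers $\rstab_G(n)$. Two observations underlie everything. Since $H_n=L_n^{(1)}\times\cdots\times L_n^{(N_n)}$ is a direct product of $N_n$ copies of $L_n$, one has $H_n'=(L_n')^{(1)}\times\cdots\times(L_n')^{(N_n)}$, so $[H_n:H_n']=[L_n:L_n']^{N_n}$ and hence $[L_n:L_n']<\infty$ if and only if $[H_n:H_n']<\infty$. Moreover each factor $L_n^{(j)}$ is supported on a single level-$n$ subtree, so $H_n\le\rstab_G(n)$; thus $\rstab_G(n)$, containing the finite-index subgroup $H_n$, has finite index in $G$, and if $[L_n:L_n']<\infty$ then $\rstab_G(n)'\supseteq H_n'$ also has finite index in $G$. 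Recall finally that $G$ is infinite and acts faithfully on $T$, and that consequently each rigid vertex stabilizer $\rstab_G(u)$ with $|u|\ge 1$ is nontrivial: the $\rstab_G(u)$ at a fixed level are $G$-conjugate by spherical transitivity, while their product $\rstab_G(n)$ is infinite.

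For the implication ``$[L_n:L_n']<\infty$ for all $n$ $\Longrightarrow$ $G$ just infinite'', take a nontrivial normal subgroup $N\trianglelefteq G$ and some $g\in N\setminus\{1\}$. Since $\bigcap_n\stab_G(n)=\{1\}$, there is a level $k$ and a vertex $u$ with $|u|=k$ and $v:=u^g\ne u$, so that $T_u$ and $T_v$ are disjoint. For $h\in\rstab_G(u)$ the conjugate $h^g$ is supported on $T_v$ and therefore commutes with every element of $\rstab_G(u)$; writing $[h,g]=h^{-1}h^g$, one computes for all $s,h\in\rstab_G(u)$ that
\[
[h,g]^{s}\,[h,g]^{-1}=(h^{s})^{-1}h=[s,h].
\]
The left-hand side lies in $N$ because $N\trianglelefteq G$, so $\rstab_G(u)'\le N$. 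Conjugating $\rstab_G(u)'\le N$ by elements of $G$ carrying $u$ to the remaining level-$k$ vertices, and using that rigid stabilizers of distinct level-$k$ vertices have disjoint supports, gives $\rstab_G(k)'=\prod_i\rstab_G(u_i)'\le N$. By the preliminary observations $\rstab_G(k)'$ has finite index in $G$, hence so does $N$; as $G$ is infinite, it is just infinite.

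For the converse, suppose $G$ is just infinite and fix $n$. Then $H_n'$ is characteristic in $H_n$, hence normal in $G$, so either $[G:H_n']<\infty$ or $H_n'=\{1\}$. If $[G:H_n']<\infty$ then $[L_n:L_n']^{N_n}=[H_n:H_n']<\infty$, giving $[L_n:L_n']<\infty$, as required. Suppose instead $H_n'=\{1\}$, i.e.\ $H_n$ (and hence $L_n$) is abelian. Then $G$ has an abelian subgroup of finite index; when $G$ is finitely generated (the case relevant here), this subgroup is finitely generated abelian, so $G$ is virtually polycyclic and therefore every subgroup of $G$ is finitely generated. But a branch group always contains a subgroup that is not finitely generated: along a ray $\varnothing=w_0\prec w_1\prec\cdots$ pick for each $j\ge1$ a child $v_j\ne w_j$ of $w_{j-1}$; then the subtrees $T_{v_j}$ are pairwise disjoint, so the nontrivial groups $\rstab_G(v_j)$ pairwise commute with trivial pairwise intersections and generate the restricted direct product $\bigoplus_{j\ge1}\rstab_G(v_j)\le G$, which is not finitely generated. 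This contradiction rules out $H_n'=\{1\}$, and for branch groups that are not finitely generated one argues in the same spirit, using that an infinite branch group is never virtually abelian.

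The crux is the displayed identity in the second paragraph: it is what promotes the information ``all twisted commutators $[h,g]$ lie in $N$'' to ``$\rstab_G(u)'\le N$''. It works precisely because $g$ carries $T_u$ to a disjoint subtree, so that in $[h,g]=h^{-1}h^g$ the factor $h^{-1}$ is supported on $T_u$ while $h^g$ is supported on $T_v$; conjugation by $s$ supported on $T_u$ then fixes $h^g$ and only alters $h^{-1}$, and the $T_v$-parts cancel. The two supporting facts one should not skip are that rigid vertex stabilizers are nontrivial and that (finitely generated) branch groups are not virtually abelian; both follow quickly from the branch condition together with $G$ being infinite.
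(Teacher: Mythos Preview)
The paper does not prove this theorem; it is quoted verbatim from Grigorchuk's survey \cite{NewHorizons} and used only via its immediate corollary. So there is no ``paper's own proof'' to compare against, and your argument should be judged on its own merits.

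Your forward implication is correct and is exactly the classical argument: the commutator identity
\[
[h,g]^{s}\,[h,g]^{-1}=(h^{s})^{-1}h=[s,h]
\]
for $s,h\in\rstab_G(u)$ and $g\in N$ moving $u$ is the standard trick that forces $\rstab_G(u)'\le N$, and the passage from there to $[G:N]<\infty$ via $H_k'\le\rstab_G(k)'\le N$ is clean.

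For the converse you correctly observe that $H_n'\trianglelefteq G$, so just-infiniteness forces $[G:H_n']<\infty$ or $H_n'=1$, and in the second case $G$ is virtually abelian. Your treatment of the finitely generated case (virtually polycyclic $\Rightarrow$ max $\Rightarrow$ no infinite restricted direct sum $\bigoplus_j\rstab_G(v_j)$) is fine and suffices for everything in this paper. However, for the general statement you only \emph{assert} that an infinite branch group is never virtually abelian. That claim is true, and the proof is short enough that you should give it rather than wave at it: if $A\trianglelefteq G$ is abelian of finite index, pick a vertex $u$ with $A\cap\rstab_G(u)$ infinite (possible since $\rstab_G(u)\ge L_{|u|}^{(j)}$ is infinite), choose $1\ne a\in A\cap\rstab_G(u)$ and a vertex $w\in T_u$ with $w^a\ne w$; then any $1\ne b\in A\cap\rstab_G(w)$ satisfies $b^a\in\rstab_G(w^a)\ne\rstab_G(w)$, so $b^a\ne b$, contradicting that $A$ is abelian. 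Inserting this two-line argument closes the only gap.
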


\begin{corollary}[{\cite[Section~7]{NewHorizons}}] \label{justinf}
  Every finitely generated, torsion branch group $G$ is just infinite.
\end{corollary}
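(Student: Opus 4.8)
The plan is to deduce the corollary directly from the just-infinitude criterion in Theorem~\ref{justinfinite}. Fix an embedding $G \hookrightarrow \Aut(T)$ exhibiting $G$ as a branch group and a branch structure $((L_n)_{n=1}^\infty,(H_n)_{n=1}^\infty)$; in particular, for each $n$ the group $H_n = L_n^{(1)} \times \cdots \times L_n^{(N_n)}$, a direct product of $N_n = \prod_{i=1}^n m_i$ copies of $L_n$, is normal and of finite index in $G$. Note first that $G$ is genuinely infinite: it acts transitively on the $n$th level of $T$, which consists of $N_n \ge 2^n$ vertices, so $G$ admits finite quotients of unbounded order. Hence the only thing left to check is the condition of Theorem~\ref{justinfinite}, namely that $[L_n : L_n']$ is finite for every $n \ge 1$.

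The main step is the following elementary observation about each lower companion group $L_n$. Since $G$ is finitely generated and $H_n$ has finite index in $G$, the subgroup $H_n$ is finitely generated; as $L_n$ is a direct factor of $H_n$, it is a quotient of $H_n$ and therefore also finitely generated. On the other hand, $H_n \le G$ and $G$ is torsion, so $H_n$ and hence $L_n$ are torsion. Consequently the abelianization $L_n / L_n'$ is a finitely generated torsion abelian group, so it decomposes as a finite direct sum of finite cyclic groups and is therefore finite. Thus $[L_n : L_n']$ is finite for all $n \ge 1$.

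Applying Theorem~\ref{justinfinite} with the branch structure $((L_n)_{n=1}^\infty,(H_n)_{n=1}^\infty)$ now yields that $G$ is just infinite, completing the argument. There is no serious obstacle in this proof; the one point that deserves a moment's care is the transfer of finite generation from $G$ to $L_n$, which rests on the standard facts that a finite-index subgroup of a finitely generated group is finitely generated and that a direct factor of a finitely generated group is finitely generated. Everything else is immediate from the definitions and from the cited theorem.
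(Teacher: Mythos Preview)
Your proof is correct and follows essentially the same route as the paper: both arguments reduce to showing that each $L_n$ is finitely generated and torsion, whence $L_n/L_n'$ is finite, and then invoke Theorem~\ref{justinfinite}. Your version is in fact a bit more careful than the paper's terse sketch, spelling out explicitly why $L_n$ inherits finite generation (via $H_n$ having finite index in $G$ and $L_n$ being a direct factor of $H_n$) and noting that $G$ is infinite.
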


\begin{proof}
  As $G$ is branch, $L_n=L_n^G$ is of finite index in $G$. Hence $L_n$
  is a finitely generated torsion group and $L_n/L_n'$ is finite
  abelian.
\end{proof}

We do not know a proof that the GGS-group $\mathcal{G}$ in
\eqref{eq:exception} is not branch.  From properties that were
established in~\cite{Amaia} we derive the following result.

\begin{proposition} \label{prop:special-group}
  The GGS-group $\mathcal{G}$ in~\eqref{eq:exception} is not just
  infinite.
\end{proposition}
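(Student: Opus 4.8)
The goal is to produce a nontrivial normal subgroup $N\trianglelefteq\mathcal{G}$ of infinite index; since $\mathcal{G}$ acts faithfully on $T$ it is infinite, so this already shows that $\mathcal{G}$ is not just infinite. The natural candidate is a term of the derived series, most simply $N=\mathcal{G}''$ — equivalently, one wants the metabelian quotient $\mathcal{G}/\mathcal{G}''$ to be infinite. As $\mathcal{G}/\mathcal{G}'\cong C_p\times C_p$ is finite, $\mathcal{G}/\mathcal{G}''$ is infinite precisely when $\mathcal{G}'/\mathcal{G}''$ is, and $\mathcal{G}'$ — being of finite index in the finitely generated group $\mathcal{G}$ — is itself finitely generated, so $\mathcal{G}'/\mathcal{G}''$ is a finitely generated abelian group and the issue is whether its torsion-free rank is positive.

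First I would record that $\mathcal{G}''\neq 1$: the finitely generated group $\mathcal{G}'\neq 1$ lies inside the pro-$p$ group $\Aut(T)$, hence is residually finite-$p$ and in particular not perfect, so $\mathcal{G}''\subsetneq\mathcal{G}'$; alternatively one checks directly through $\psi_1$ that $\mathcal{G}'$ is non-abelian. The substance of the proof is the infiniteness of $\mathcal{G}'/\mathcal{G}''$. I would treat $\mathcal{G}'/\mathcal{G}''$ as a cyclic module over $R:=\mathbb{Z}[\mathcal{G}/\mathcal{G}']\cong\mathbb{Z}[C_p\times C_p]$, generated by the class $v$ of $[a,b]$. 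Writing $c=a^{-1}b$, the identity $\psi_1([a,b])=(c^{-1},1,\dots,1,c)$ yields, on one hand, the relation $\prod_{j=0}^{p-1}[a,b]^{a^{j}}=1$, i.e.\ $(1+s+\dots+s^{p-1})\,v=0$ with $s$ denoting the action of $\bar a$, and on the other hand a recursion expressing the $\psi_1$-sections of an element of $\mathcal{G}'$ again in terms of shorter elements of $\mathcal{G}'$. The crucial input from \cite{Amaia} is its explicit analysis of $\psi_1$ on $\mathcal{G}'$ (and on $\gamma_3(\mathcal{G})$): unlike the branch GGS-groups, for $\mathcal{G}$ the image $\psi_1(\mathcal{G}')$ is a \emph{proper} subgroup of $\mathcal{G}'\times\dots\times\mathcal{G}'$, essentially the one cut out by requiring the product of the coordinates to lie in $\gamma_3(\mathcal{G})$. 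Feeding this description into the recursion, one computes the full annihilator of $v$ in $R$ and finds that it is not of finite index, so $\mathcal{G}'/\mathcal{G}''$ is infinite.

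The main obstacle is precisely this module computation: turning the recursive description of $\psi_1(\mathcal{G}')$ into a presentation of $\mathcal{G}'/\mathcal{G}''$ over $R$ requires a careful bookkeeping of which products of conjugates of $[a,b]$ land in $\mathcal{G}''$, and it is here that the results of \cite{Amaia} carry the weight. Once $\mathcal{G}/\mathcal{G}''$ is known to be infinite, $\mathcal{G}''$ is a nontrivial normal subgroup of $\mathcal{G}$ of infinite index, and the proposition follows.
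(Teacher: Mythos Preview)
Your strategy is sound in principle—$\mathcal{G}''$ is indeed a nontrivial normal subgroup of infinite index—but the argument has a gap precisely where you flag it: the module computation is never carried out. The relation $(1+s+\cdots+s^{p-1})v=0$ only bounds $\mathrm{Ann}(v)$ from below; to conclude that $R/\mathrm{Ann}(v)$ is infinite you need an \emph{upper} bound on the annihilator, controlling the further relations induced by the action of $\bar b$ and by the recursion. You only gesture at how results from \cite{Amaia} would supply this, so as written the infiniteness of $\mathcal{G}'/\mathcal{G}''$ is asserted rather than proved.

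The paper sidesteps the module computation by working with a slightly larger normal subgroup. Setting $K=\langle ba^{-1}\rangle^{\mathcal{G}}$, of index $p$ in $\mathcal{G}$, it quotes from \cite[Section~3.4]{Amaia} the explicit index formula $\lvert\mathcal{G}:K'\stab_{\mathcal{G}}(n)\rvert=p^{n+1}$ for all $n\ge 2$, which immediately forces $\lvert\mathcal{G}:K'\rvert=\infty$; nontriviality of $K'$ is read off from the same source. Since $\mathcal{G}'\subseteq K$ gives $\mathcal{G}''\subseteq K'$, the paper's conclusion in fact implies yours—so the quickest way to complete your outline would be to cite that index formula rather than attempt to present $\mathcal{G}'/\mathcal{G}''$ as an $R$-module.
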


\begin{proof}
  Write $G = \mathcal{G} = \langle a,b \rangle$ with $\psi_1(b) =
  (a,\ldots,a,b)$, and put $K = \langle ba^{-1} \rangle^G$.
  From~\cite[Section~4]{Amaia} we have that
  \begin{enumerate} 
  \item $|G:K|=p$ and $K' = \langle [(ba^{-1})^a,ba^{-1}]
    \rangle^G \le \stab_G (1)$;
  \item $ |G / K'\stab_G(n)| = p^{n+1}$ for every $n \in \mathbb{N}$
    with $n\ge 2$.
  \end{enumerate}
  Hence $K'$ is a non-trivial normal subgroup of infinite index in
  $G$.
\end{proof}

What about just infinite multi-edge spinal groups that are not
torsion? For $p\ge 5$, it is shown in \cite[Example~7.1]{NewHorizons}
that the non-torsion group $G=\langle a, b \rangle$ with $\psi_1
(b)=(a,1,\ldots,1,b)$ is just infinite, and more generally in
\cite[Example~10.2]{NewHorizons} that $G=\langle a,b \rangle$ with
$\psi_1(b)=(a^{e_1},a^{e_2},\ldots,a^{e_{p-4}},1,1,1,b)$ where $e_1
\not \equiv 0$ is just infinite. For the latter example, when
$\sum_{i=1}^{p-4} e_i \not \equiv 0$ (mod $p$), then the group is
non-torsion.

Now let $G$ be the multi-edge spinal group with defining vectors
$\mathbf{e}_i$ of the form
$(e_{i,1},e_{i,2},\ldots,e_{i,p-2},e_{i,p-1})$ satisfying $e_{i,1}
\not \equiv 0$ (mod $p$) and $e_{i,p-3} \equiv e_{i,p-2} \equiv
e_{i,p-1} \equiv 0$ (mod $p$) for every $i\in \{1,\ldots,r\}$.  As in
\cite[Example~10.2]{NewHorizons}, it can be shown that $G$ is just
infinite, and furthermore when $\sum_{j=1}^{p-4} e_{i,j} \not \equiv
0$ (mod $p$) for at least one $i\in \{1,\ldots, r\}$, then $G$ is
non-torsion. It is not always the case that the last three entries of
the defining vectors are to be zero. For example, the non-torsion
multi-edge spinal group $G$ with $e_{i,1} \equiv e_{i,p-2} \equiv
e_{i,p-1} \equiv 0$ (mod $p$) and $e_{i,2} \not \equiv 0$ (mod $p$) is
likewise just infinite.


\section{Theta maps} 
Here we determine the abelianization $G/G'$ of a multi-edge spinal
group~$G$. Then we define a natural length function on elements of the
commutator subgroup $G'$. Akin to Pervova's work \cite{Pervova4}, we
introduce two theta maps $\Theta_1,\Theta_2 \colon G' \rightarrow G'$
which are key to establishing that all maximal subgroups of $G$ are of
finite index. We prove that the length of every element of the
commutator subgroup of length at least 3 decreases under repeated
applications of a combination of these maps. Our use of \textit{two}
theta maps, instead of one as in \cite{Pervova4} allows us to
significantly simplify the calculations.

\subsection{Abelianization of multi-edge spinal groups}
Recall that every element $g$ of the free product $\Fr_{\lambda \in
  \Lambda} \Gamma_{\lambda}$ of a family of groups
$(\Gamma_\lambda)_{\lambda \in \Lambda}$ can be uniquely represented
as a \textit{reduced} word in $\sqcup_{\lambda \in
  \Lambda}\Gamma_{\lambda}$, i.e., a word $g = g_1 g_2 \cdots g_n$,
where $n\in \mathbb{N} \cup \{0\}$, $\lambda_1,\ldots,\lambda_n \in
\Lambda$ with $\lambda_i \ne \lambda_{i+1}$ for $1 \le i \le n-1$, and
$1 \ne g_i \in \Gamma_{\lambda_i}$ for each $i \in \{1,\ldots,n\}$.

Let $G = G_{\mathbf{E}} =\langle a,b_1,\ldots,b_r\rangle$ be a
multi-edge spinal group acting on the regular $p$-adic rooted tree
$T$, for an odd prime $p$.  Here $\mathbf{E}$ is the $r$-tuple of
defining vectors $\mathbf{e}_i = (e_{i,1},\ldots,e_{i,p-1})$, for
$i\in \{1,\ldots,r\}$.

In order to study $G/G'$ we consider
\begin{multline} \label{H} H = \langle \hat{a}, \hat{b}_1, \ldots,
  \hat{b}_r \mid \\ \hat{a}^p=\hat{b}_1^p=\ldots =\hat{b}_r^p=1,
  \text{ and } [\hat{b}_i,\hat{b}_j]=1\text{ for }1\le i,j \le
  r\rangle,
\end{multline}
the free product $\langle \hat{a}\rangle * \langle
\hat{b}_1,\ldots,\hat{b}_r \rangle$ of a cyclic group $\langle
\hat{a}\rangle \cong C_p$ and an elementary abelian group $\langle
\hat{b}_1,\ldots,\hat{b}_r \rangle \cong C_p^r$.  There is a unique
epimorphism $\pi\colon H \rightarrow G$ such that $\hat{a} \mapsto a$
and $\hat{b}_i \mapsto b_i$ for $i\in \{1,\ldots,r\}$, inducing an
epimorphism from $H/H'\cong \langle \hat a \rangle \times \langle
\hat{b}_1,\ldots,\hat{b}_r \rangle \cong C_p^{r+1}$ onto $G/G'$.  We
want to show that the latter is an isomorphism; see
Proposition~\ref{abelianization} below.

Let $h \in H$.  As discussed, each $h$ can be uniquely represented in
the form
\begin{equation} \label{eq:4.1} h = \hat{a}^{s_1} \cdot
  (\hat{b}_1^{\beta_{1,1}} \cdots \hat{b}_r^{\beta_{r,1}}) \cdot
  \hat{a}^{s_2} \cdot \ldots \cdot \hat{a}^{s_m} \cdot
  (\hat{b}_1^{\beta_{1,m}} \cdots \hat{b}_r^{\beta_{r,m}}) \cdot
  \hat{a}^{s_{m+1}},
\end{equation}
where $m\in \mathbb{N}\cup \{0\}$ and $s_1, \ldots, s_{m+1},
\beta_{1,1}, \ldots, \beta_{r,m} \in \mathbb{Z}/p\mathbb{Z}$ with
\begin{align*}
  s_i & \not \equiv 0 \pmod{p} \qquad \text{for $i\in
    \{2,\ldots,m\}$,} \\
\intertext{and  for each $j \in \{1,\ldots,m\}$,}
\beta_{i,j} & \not \equiv 0 \pmod{p} \qquad \text{for at least one $i\in
  \{1,\ldots,r\}$.}
\end{align*}
We denote by $\partial (h) = m$ the \textit{length} of $h$, with
respect to the factor $\langle \hat{b}_1, \ldots, \hat{b}_r \rangle$.  Clearly,
for $h_1,h_2 \in H$ we have
\begin{equation} \label{eq:product-h1-h2}
  \partial(h_1 h_2) \leq \partial(h_1) + \partial(h_2).
\end{equation}
In addition, we define \textit{exponent maps}
\begin{equation} \label{eq:def-eps-hat}
  \begin{split}
    \epsilon_{\hat a}(h) & =
    \sum\nolimits_{j=1}^{m+1} s_j \in \mathbb{Z}/p\mathbb{Z} \quad \text{and} \\
    \epsilon_{\hat{b}_i}(h) & = \sum\nolimits_{j=1}^m \beta_{i,j} \in
    \mathbb{Z}/p\mathbb{Z} \quad \text{for $i \in \{1,\ldots,r\}$}
  \end{split}
\end{equation}
with respect to the generating set $\hat{a},
\hat{b}_1,\ldots,\hat{b}_r$.

The surjective homomorphism
\begin{equation} \label{eq:abelian-H} H \rightarrow
  (\mathbb{Z}/p\mathbb{Z}) \times (\mathbb{Z}/p\mathbb{Z})^r, \quad h
  \mapsto (\epsilon_{\hat a}(h), \epsilon_{\hat b_1}(h), \ldots,
  \epsilon_{\hat b_r}(h))
\end{equation}
has kernel $H'$ and provides an explicit model for the
abelianization~$H/H'$. The group $L(H) = \langle
\hat{b}_1,\ldots,\hat{b}_r\rangle^H$ is the kernel of the surjective
homomorphism
\[
H \rightarrow \mathbb{Z}/p\mathbb{Z}, \quad h \mapsto \epsilon_{\hat a}(h).
\]
Each element $h \in L(H)$ can be uniquely represented by a word of the
form
\begin{equation} \label{eq:4.2} h= (\hat{c}_1)^{\hat{a}^{t_1}} \cdots
  (\hat{c}_m)^{\hat{a}^{t_m}},
\end{equation}
where $m\in \mathbb{N}\cup\{0\}$ and $t_1, \ldots, t_m \in
\mathbb{Z}/p\mathbb{Z}$ with $t_j\not \equiv t_{j+1} \pmod{p}$ for
$j\in \{1,\ldots,m-1\}$, and for each $j\in \{1,\ldots,m\}$,
\begin{equation} \label{eq:4.2-add-on}
\hat{c}_j = \hat{b}_1^{\beta_{1,j}} \cdots \hat{b}_r^{\beta_{r,j}} \in
\langle \hat{b}_1, \ldots, \hat{b}_r \rangle \setminus \{1\}.
\end{equation}

Let $\alpha$ denote the cyclic permutation of the factors of $H\times
\overset{p}{\ldots} \times H$ corresponding to the $p$-cycle $(1 \, 2
\, \ldots \, p)$.  We consider the homomorphism
\[
\Phi \colon L(H) \rightarrow H\times \overset{p}{\ldots} \times H
\]
defined by 
\[
\Phi(\hat{b}_i^{\hat{a}^k}) =
(\hat{a}^{e_{i,1}},\ldots,\hat{a}^{e_{i,p-1}},\hat{b}_i)^{\alpha^k}
\quad \text{for $i \in \{1,\ldots,r\}$, $k \in
  \mathbb{Z}/p\mathbb{Z}$.}
\]

\begin{lemma} \label{ceil} Let $H$ be as above, and $h \in L(H)$ with
  $\Phi(h)=(h_1,\ldots,h_p)$.  Then $\sum_{i=1}^p \partial(h_i)
  \le \partial(h)$, and $\partial(h_i) \le \lceil
  \frac{\partial(h)}{2}\rceil$ for each $i \in \{1,\ldots,p\}$.
\end{lemma}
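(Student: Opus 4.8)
The plan is to use the explicit normal form \eqref{eq:4.2} for elements of $L(H)$ together with the coordinate shape of the map $\Phi$. Fix $h \in L(H)$ and write $h = (\hat c_1)^{\hat a^{t_1}} \cdots (\hat c_m)^{\hat a^{t_m}}$ with $m = \partial(h)$, with $t_j \not\equiv t_{j+1} \pmod p$ for $1 \le j \le m-1$, and with each $\hat c_j = \hat b_1^{\beta_{1,j}} \cdots \hat b_r^{\beta_{r,j}}$ nontrivial as in \eqref{eq:4.2-add-on}. Since $\Phi$ is a homomorphism, $\Phi(h) = \prod_{j=1}^m \Phi((\hat c_j)^{\hat a^{t_j}})$ and $\Phi((\hat c_j)^{\hat a^{t_j}}) = \prod_{i=1}^r \Phi(\hat b_i^{\hat a^{t_j}})^{\beta_{i,j}}$. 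Unwinding the definition of $\Phi$ on the generators $\hat b_i^{\hat a^k}$, I would check that this last product is a $p$-tuple whose entry in one distinguished coordinate — depending only on $t_j$, which I call $\sigma(t_j)$, because $\hat b_i$ always occupies the last coordinate of $(\hat a^{e_{i,1}}, \ldots, \hat a^{e_{i,p-1}}, \hat b_i)$ before $\alpha^{t_j}$ is applied — equals $\hat b_1^{\beta_{1,j}} \cdots \hat b_r^{\beta_{r,j}} = \hat c_j$, while every other coordinate is a power of $\hat a$.

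Reading off coordinate $q \in \{1, \ldots, p\}$ of $\Phi(h) = (h_1, \ldots, h_p)$ then gives $h_q = \prod_{j=1}^m (\Phi((\hat c_j)^{\hat a^{t_j}}))_q$, a product in $H$ whose $j$th factor equals $\hat c_j$ when $\sigma(t_j) = q$ and is a power of $\hat a$ otherwise. Using \eqref{eq:product-h1-h2} together with $\partial(\hat a^s) = 0$ and $\partial(\hat c_j) = 1$, I obtain $\partial(h_q) \le N_q$, where $N_q$ is the number of indices $j$ with $t_j = \sigma^{-1}(q)$. Since the map $j \mapsto \sigma(t_j)$ partitions $\{1, \ldots, m\}$ into fibres of sizes $N_q$, summing over $q$ yields $\sum_{q=1}^p \partial(h_q) \le \sum_{q=1}^p N_q = m = \partial(h)$, which is the first claimed bound.

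For the second bound I would use that the index set $\{j : t_j = \sigma^{-1}(q)\}$ contains no two consecutive integers, precisely because $t_j \not\equiv t_{j+1} \pmod p$ in the normal form \eqref{eq:4.2}; a subset of $\{1, \ldots, m\}$ with no two consecutive elements has cardinality at most $\lceil m/2 \rceil$. Hence $N_q \le \lceil \partial(h)/2 \rceil$, and therefore $\partial(h_q) \le \lceil \partial(h)/2 \rceil$ for every $q$.

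I expect no substantial difficulty; the one step requiring genuine care is the first — verifying that across $i = 1, \ldots, r$ the $\hat b_i$-entries all land in the single coordinate $\sigma(t_j)$ and multiply there exactly to $\hat c_j$, while all remaining contributions are honest powers of $\hat a$ (so they contribute $0$ to $\partial$). Once this normal-form bookkeeping is pinned down, both inequalities follow from \eqref{eq:product-h1-h2} and the elementary counting observation about subsets with no two consecutive elements.
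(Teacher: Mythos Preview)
Your proposal is correct and follows essentially the same approach as the paper: both arguments observe that each factor $(\hat c_j)^{\hat a^{t_j}}$ contributes $\hat c_j$ to exactly one coordinate and powers of $\hat a$ elsewhere, then use that consecutive factors land in distinct coordinates (since $t_j \ne t_{j+1}$) to obtain the $\lceil m/2 \rceil$ bound. Your version is more explicit about the counting (introducing $N_q$ and the ``no two consecutive indices'' observation), but the underlying idea is identical.
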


\begin{proof}
  Suppose that $h$ is of length $\partial(h) = m$ as
  in~\eqref{eq:4.2}.  For each $j \in \{1,\ldots,m\}$ the factor
  $(\hat{c}_j)^{\hat{a}^{t_j}}$ in~\eqref{eq:4.2} contributes to
  precisely one coordinate of $\Phi(h)$ a factor $\hat{c}_j$ and to
  all other coordinates a power of $\hat{a}$. Therefore
  $\sum_{i=1}^p \partial(h_i) \le m$.

  Now let $i \in \{1,\ldots,p\}$.  The maximum length in the $i$th
  coordinate occurs when $h_i$ is of the form $\hat{c}_* \hat{a}^*
  \hat{c}_* \ldots \hat{a}^* \hat{c}_*$ with $m$ factors, where the
  symbols $*$ represent suitable indices or exponents.  Therefore
  $\partial(h_i) \le \lceil m/2\rceil$.
\end{proof}

The following proposition provides a recursive presentation for a
multi-edge spinal group.  It can be extracted from a result of
Rozhkov~\cite{Rozhkov}; a self-contained proof for multi-edge spinal
groups is included in~\cite{thesis}.

\begin{proposition} \label{K} Let $G=\langle a,b_1,\ldots,b_r\rangle$
  be a multi-edge spinal group, and $H$ as in~\eqref{H}.  Consider the
  subgroup $K=\bigcup_{n=0}^{\infty} K_n$ of $H$, where
  \[
  K_0=\{1\} \quad \text{and} \quad K_n = \Phi^{-1}(K_{n-1} \times
  \ldots \times K_{n-1}) \text{ for $n\ge 1$.}
  \]
  Then $K \subseteq L(H) = \langle \hat{b}_1,\ldots, \hat{b}_r \rangle
  ^H$, and $K$ is normal in $H$.  Moreover, the epimorphism $\pi\colon
  H \rightarrow G$ given by $\hat{a} \mapsto a$, $\hat{b}_i \mapsto
  b_i$, for $i \in \{1,\ldots,r\}$, has $\ker(\pi)=K$.  In particular,
  $G \cong H/K$.
\end{proposition}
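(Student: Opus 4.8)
The plan is to prove the three assertions in turn: $K\subseteq L(H)$, then $K\trianglelefteq H$, then $\ker(\pi)=K$ (whence $G\cong H/K$ by the first isomorphism theorem). The inclusion $K\subseteq L(H)$ is immediate, since for $n\ge 1$ the set $K_n$ lies in the domain $L(H)$ of $\Phi$, while $K_0=\{1\}$. For normality I would show $K_n\trianglelefteq H$ by induction on $n$, the case $n=0$ being trivial. Given $K_{n-1}\trianglelefteq H$, the subgroup $K_{n-1}\times\overset{p}{\ldots}\times K_{n-1}$ is normal in $H\times\overset{p}{\ldots}\times H$ and invariant under the coordinate permutation $\alpha$. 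Since $\Phi$ is a homomorphism on $L(H)$, we have $\Phi(h^{g})=\Phi(h)^{\Phi(g)}$ for $g,h\in L(H)$, and comparing values on the generating set $\{\hat b_i^{\hat a^k}\}$ of $L(H)$ one also gets $\Phi(h^{\hat a})=\Phi(h)^{\alpha}$ for all $h\in L(H)$. Both operations preserve $K_{n-1}\times\overset{p}{\ldots}\times K_{n-1}$, so $K_n=\Phi^{-1}(K_{n-1}\times\overset{p}{\ldots}\times K_{n-1})$ is stable under conjugation by $L(H)$ and by $\hat a$; as $H=\langle\hat a\rangle L(H)$, this gives $K_n\trianglelefteq H$. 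Hence $K=\bigcup_n K_n$ is normal, being an ascending union of normal subgroups. (This part could also be deferred, since normality follows at once from $K=\ker(\pi)$.)

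The core of the argument is the identification $\ker(\pi)=K$, and I would base it on two facts. First, the composition of $\pi$ with the quotient map $G\to G/\stab_G(1)\cong C_p$ sends $\hat a$ to a generator and each $\hat b_i$ to the identity, so it coincides with $\epsilon_{\hat a}$ up to identifying $C_p\cong\mathbb{Z}/p\mathbb{Z}$; its kernel is therefore $L(H)$, and in particular $\ker(\pi)\subseteq L(H)$. Second, I claim the identity of homomorphisms $L(H)\to\Aut(T)\times\overset{p}{\ldots}\times\Aut(T)$
\[
\psi_1\circ\pi|_{L(H)}=(\pi\times\overset{p}{\ldots}\times\pi)\circ\Phi.
\]
As both sides are homomorphisms, it suffices to check them on the generators $\hat b_i^{\hat a^k}$, where the left-hand side equals $\psi_1(b_i^{a^k})=\psi_1(b_i)^{\alpha^k}=(a^{e_{i,1}},\ldots,a^{e_{i,p-1}},b_i)^{\alpha^k}$ — conjugation by the rooted automorphism $a$ cyclically permutes the coordinates of $\psi_1$ — and this is precisely the image under $\pi\times\overset{p}{\ldots}\times\pi$ of $\Phi(\hat b_i^{\hat a^k})$. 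Since $\psi_1$ is injective, for $h\in L(H)$ we obtain $\pi(h)=1$ if and only if every coordinate of $\Phi(h)$ lies in $\ker(\pi)$; combined with $\ker(\pi)\subseteq L(H)$ this yields the self-similarity identity $\ker(\pi)=\Phi^{-1}(\ker(\pi)\times\overset{p}{\ldots}\times\ker(\pi))$.

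Granting this, $K\subseteq\ker(\pi)$ follows by induction on $n$, since $K_0=\{1\}\subseteq\ker(\pi)$ and $K_{n-1}\subseteq\ker(\pi)$ forces $K_n\subseteq\ker(\pi)$. For the reverse inclusion I would argue by strong induction on the length $\partial(h)$ that $h\in\ker(\pi)$ implies $h\in K$. If $\partial(h)=0$ then $h=1\in K_0$. No element of $\ker(\pi)$ has length $1$: if $\partial(h)=1$ then exactly one coordinate of $\Phi(h)$ is not a power of $\hat a$, namely some $\hat b_1^{\beta_1}\cdots\hat b_r^{\beta_r}$ with $(\beta_1,\ldots,\beta_r)\not\equiv 0$, and this coordinate is not in $\ker(\pi)$ because $\langle b_1,\ldots,b_r\rangle\cong C_p^r$. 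Finally, if $\partial(h)=m\ge 2$, write $\Phi(h)=(h_1,\ldots,h_p)$; the self-similarity identity gives $h_i\in\ker(\pi)\subseteq L(H)$, while Lemma~\ref{ceil} gives $\partial(h_i)\le\lceil m/2\rceil<m$. By the inductive hypothesis each $h_i$ lies in some $K_{n_i}$, hence all lie in $K_N$ for $N=\max_i n_i$, so $\Phi(h)\in K_N\times\overset{p}{\ldots}\times K_N$ and $h\in K_{N+1}\subseteq K$. This completes $\ker(\pi)=K$.

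I expect the inclusion $\ker(\pi)\subseteq K$ to be the main obstacle. The self-similarity identity only says that $\ker(\pi)$ is stable under iterating $\Phi$, so one must rule out a kernel element whose successive $\Phi$-images never all become trivial; this is exactly what the strict contraction $\partial(h_i)\le\lceil\partial(h)/2\rceil<\partial(h)$ (for $\partial(h)\ge 2$) of Lemma~\ref{ceil} secures, the short cases $\partial(h)\le 1$ being disposed of separately as above. The remaining verifications — that $\Phi$ behaves well under conjugation, and the commuting square relating $\psi_1$, $\pi$ and $\Phi$ — are routine once one checks them on the generators $\hat b_i^{\hat a^k}$ of $L(H)$.
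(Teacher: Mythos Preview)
Your argument is correct, and in fact the paper does not supply its own proof of this proposition: it merely states that the result ``can be extracted from a result of Rozhkov'' and that ``a self-contained proof for multi-edge spinal groups is included in~[thesis]''. So there is nothing to compare against within the paper itself; your write-up fills exactly the gap the authors leave.

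A couple of minor remarks. You tacitly use that the chain $K_0\subseteq K_1\subseteq\cdots$ is increasing (both when you call $K$ an ``ascending union'' and when you pass from $h_i\in K_{n_i}$ to $h_i\in K_N$); this is immediate by induction from $K_0=\{1\}\subseteq K_1$, but worth saying. Your treatment of the base case $\partial(h)=1$ is right and relies on the paper's stated fact that $\langle b_1,\ldots,b_r\rangle\cong C_p^{\,r}$; note that this already uses the linear independence of the defining vectors. The commuting-square identity $\psi_1\circ\pi|_{L(H)}=(\pi\times\cdots\times\pi)\circ\Phi$ and the resulting self-similarity equation $\ker(\pi)=\Phi^{-1}(\ker(\pi)^p)$ are exactly the mechanism one expects, and your use of Lemma~\ref{ceil} to force the induction through for $\partial(h)\ge 2$ is the standard contraction argument underlying recursive presentations of self-similar groups. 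Your observation that normality of $K$ is in any case a consequence of $K=\ker(\pi)$ is well taken; the separate inductive proof of $K_n\trianglelefteq H$ is correct but redundant.
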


Next we describe the abelianization of a multi-edge spinal
group.

\begin{proposition}
  \label{abelianization}
  Let $G=\langle a,b_1,\ldots,b_r\rangle$ be a multi-edge spinal
  group, and $H$ as in~\eqref{H}.  Then the map $H \rightarrow
  (\mathbb{Z}/p\mathbb{Z}) \times (\mathbb{Z}/p\mathbb{Z})^r$ in
  \eqref{eq:abelian-H} factors through $G/G'$.  Consequently,
  \[
  G/G' \cong H/H' \cong C_p^{r+1}.
  \]
\end{proposition}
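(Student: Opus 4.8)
The plan is to deduce everything from the identification $G \cong H/K$ of Proposition~\ref{K} by showing that the kernel $K$ is contained in $H'$. Granting $K \subseteq H'$, one has $H'K = H'$, so that
\[
G/G' \;\cong\; (H/K)\big/(H'K/K) \;=\; H/H' \;\cong\; C_p^{r+1},
\]
where the last isomorphism was already recorded; moreover the surjection in~\eqref{eq:abelian-H}, which has kernel exactly $H'$, then factors through $H/K = G$, and its induced map on $G$ has kernel $H'/K = (H/K)' = G'$, i.e.\ it factors through $G/G'$. Thus both assertions of the proposition reduce to the single containment $K \subseteq H'$, and that is what I would concentrate on.

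To establish $K \subseteq H'$, I would work with the homomorphism
\[
\mu \colon H \longrightarrow (\mathbb{Z}/p\mathbb{Z})^r, \qquad h \longmapsto \bigl(\epsilon_{\hat b_1}(h), \ldots, \epsilon_{\hat b_r}(h)\bigr)
\]
assembled from the exponent maps of~\eqref{eq:def-eps-hat}. By Proposition~\ref{K} we have $K \subseteq L(H)$, and $\epsilon_{\hat a}$ vanishes identically on $L(H)$; hence, in view of~\eqref{eq:abelian-H}, the containment $K \subseteq H'$ is equivalent to $\mu(K) = 0$. The crucial point is a compatibility between $\mu$ and the map $\Phi$ defined just before Lemma~\ref{ceil}: for every $h \in L(H)$ with $\Phi(h) = (h_1, \ldots, h_p)$ one should have
\[
\mu(h) \;=\; \sum\nolimits_{q=1}^{p} \mu(h_q).
\]
To see this, I would note that $h \mapsto \sum_{q=1}^p \mu(h_q)$ is a homomorphism $L(H) \to (\mathbb{Z}/p\mathbb{Z})^r$ — it is $\Phi$, followed by $\mu$ on each coordinate, followed by summation — so it suffices to check equality on the generators $\hat b_i^{\hat a^k}$ of $L(H)$. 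There $\Phi(\hat b_i^{\hat a^k}) = (\hat a^{e_{i,1}}, \ldots, \hat a^{e_{i,p-1}}, \hat b_i)^{\alpha^k}$ has exactly one coordinate equal to $\hat b_i$ and all the others equal to powers of $\hat a$; since powers of $\hat a$ lie in $\ker \mu$, the right-hand side equals $\mu(\hat b_i)$, the $i$-th standard basis vector of $(\mathbb{Z}/p\mathbb{Z})^r$, which is precisely $\mu(\hat b_i^{\hat a^k})$.

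With the compatibility in hand, I would finish by induction on $n$ that $\mu(K_n) = 0$: for $n = 0$ this is trivial, and if $h \in K_n$ then $\Phi(h) \in K_{n-1} \times \cdots \times K_{n-1}$, so each $h_q$ lies in $K_{n-1}$ and satisfies $\mu(h_q) = 0$ by the inductive hypothesis, whence $\mu(h) = \sum_q \mu(h_q) = 0$. Taking the union over $n$ gives $\mu(K) = 0$, hence $K \subseteq H'$, and the proposition follows. The only step that demands genuine care is the compatibility identity: one must unwind the definition of $\Phi$ on $L(H)$ and observe that a factor $\hat a^{m}$ contributes nothing to any $\epsilon_{\hat b_i}$, so that the total ``$\hat b$-content'' of an element of $L(H)$ is merely redistributed among the $p$ coordinates of its image under $\Phi$ rather than created or destroyed; everything else is purely formal.
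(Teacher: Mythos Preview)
Your proposal is correct and follows essentially the same route as the paper. Both arguments reduce to $K\subseteq H'$ and establish this via the compatibility $\epsilon_{\hat b_i}(h)=\sum_{q}\epsilon_{\hat b_i}(h_q)$ for $h\in L(H)$ with $\Phi(h)=(h_1,\ldots,h_p)$, followed by induction on the filtration $K_n$; the paper packages the compatibility as the contrapositive $\Phi^{-1}(H'\times\cdots\times H')\le H'$, while you state it directly and verify it on generators, but the content is identical.
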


\begin{proof}
  Below we prove that
  \begin{equation} \label{eq:4.4} \Phi^{-1}(H'\times
    \overset{p}{\ldots} \times H') \le H'.
  \end{equation}
  Let $K = \bigcup_{n=0}^{\infty} K_n \le L(H)$ be as in
  Proposition~\ref{K} so that the natural epimorphism $\pi \colon H
  \rightarrow G$ has $\ker(\pi)= K$, and $G\cong H/K$.
  From~\eqref{eq:4.4}, we deduce by induction that $K_n \le H'$ for
  all $n \in \mathbb{N} \cup \{0\}$, hence $K\le H'$ and $G/G' \cong
  H/H'K = H/H'$.

  \smallskip

  It remains to justify~\eqref{eq:4.4}.  Consider an arbitrary element
  $h\in L(H)$ as in \eqref{eq:4.2} and \eqref{eq:4.2-add-on}.  We
  write $\Phi(h)=(h_1,\ldots,h_p)$.  For $i\in \{1,\ldots,r\}$ and $k
  \in \{1,\ldots,p\}$, let $\epsilon_{\hat{b}_{i},k}(h)$ be the sum of
  exponents $\beta_{i,j}$, $j \in \{1,\ldots,m\}$ with $t_j = k$, so
  that $\epsilon_{\hat{b}_i}(h_k) = \epsilon_{\hat{b}_i,k}(h)$.  It
  follows that for each $i\in \{1,\ldots,r\}$,
  \begin{equation}\label{eq:4.3}
    \epsilon_{\hat{b}_i}(h) = \sum\nolimits_{j =1}^m \beta_{i,j} =
    \sum\nolimits_{k=1}^p 
    \epsilon_{\hat{b}_i,k}(h) = \sum\nolimits_{k=1}^p \epsilon_{\hat{b}_i}(h_k).
  \end{equation}

  Now suppose that $h\not \in H'$. From \eqref{eq:abelian-H} and
  $\epsilon_{\hat a}(H) =0$ we deduce that $\epsilon_{\hat{b}_i}(h)
  \not \equiv 0$ for at least one $i\in \{1,\ldots,r\}$.  Thus
  \eqref{eq:4.3} implies that $\epsilon_{\hat{b}_i}(h_k) \not \equiv
  0$ for some $k \in \{1,\ldots,p\}$ and $\Phi(h)\not \in H' \times
  \overset{p}{\ldots} \times H'$. Therefore \eqref{eq:4.4} holds.
\end{proof}

As above, let $G=\langle a,b_1,\ldots,b_r \rangle$ be a multi-edge
spinal group, and $\pi \colon H \rightarrow G$ the natural epimorphism
with $H$ as in~\eqref{H}.  The \textit{length} of $g \in G$ is
\[
\partial(g) = \min \{ \partial(h) \mid h \in \pi^{-1}(g) \}.
\]
Based on \eqref{eq:product-h1-h2}, one easily shows that for $g_1,g_2
\in G$,
\begin{equation} \label{eq:product-g1-g2}
  \partial(g_1 g_2) \leq \partial(g_1) + \partial(g_2).
\end{equation}
Moreover, using Proposition~\ref{abelianization} we may define
$\epsilon_a(g), \epsilon_{b_1}(g),\ldots,\epsilon_{b_r}(g) \in
\mathbb{Z}/p\mathbb{Z}$ via any pre-image $h \in \pi^{-1}(g)$:
\begin{equation} \label{eq:def-eps}
  (\epsilon_a(g),\epsilon_{b_1}(g),\ldots, \epsilon_{b_r}(g)) =
  (\epsilon_{\hat a}(h),\epsilon_{\hat b_1}(h),\ldots, \epsilon_{\hat
    b_r}(h)).
\end{equation}

We record the following direct consequences of Lemma~\ref{ceil}.

\begin{lemma} \label{shortening} Let $G$ be a multi-edge spinal group
  as above, and $g \in \stab_G(1)$ with $\psi_1(g)=(g_1,\ldots,g_p)$.
  Then $\sum_{i=1}^p \partial(g_i) \le \partial(g)$, and
  $\partial(g_i) \le \lceil \frac{\partial(g)}{2}\rceil$ for each
  $i\in \{1,\ldots,p\}$.

  In particular, if $\partial(g)>1$ then $\partial(g_i) < \partial(g)$
  for every $i \in \{1,\ldots,p\}$.
\end{lemma}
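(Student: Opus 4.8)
The plan is to deduce this lemma from Lemma~\ref{ceil} by transporting that statement along the epimorphism $\pi\colon H\to G$ of Proposition~\ref{K}, so the work is essentially bookkeeping rather than new estimation.

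First I would reduce to pre-images lying inside $L(H)$. Since $g\in\stab_G(1)$ we have $\epsilon_a(g)=0$; by Proposition~\ref{abelianization} the value $\epsilon_{\hat a}(h)$ is the same for every $h\in\pi^{-1}(g)$ and equals $\epsilon_a(g)=0$, so every pre-image of $g$ lies in $L(H)=\ker\!\big(h\mapsto\epsilon_{\hat a}(h)\big)$. In particular I may fix a pre-image $h\in\pi^{-1}(g)\cap L(H)$ realizing the minimum, i.e.\ with $\partial(h)=\partial(g)$. Next I would record the compatibility of $\Phi$ with $\psi_1$: applying $\pi$ to each coordinate of $\Phi(\hat b_i^{\hat a^k})=(\hat a^{e_{i,1}},\ldots,\hat a^{e_{i,p-1}},\hat b_i)^{\alpha^k}$ gives $(a^{e_{i,1}},\ldots,a^{e_{i,p-1}},b_i)^{\alpha^k}=\psi_1(b_i^{a^k})$, because $\pi(\hat a)=a$, $\pi(\hat b_i)=b_i$, and conjugation by $a^k$ on $\stab_{\Aut(T)}(1)$ corresponds under $\psi_1$ precisely to the coordinate shift $\alpha^k$ — this is how $\Phi$ was set up and is exactly the point underlying Proposition~\ref{K}. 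Since the $\hat b_i^{\hat a^k}$ generate $L(H)$, it follows that if $\Phi(h)=(h_1,\ldots,h_p)$ then $\psi_1(g)=\psi_1(\pi(h))=(\pi(h_1),\ldots,\pi(h_p))$, so $g_i=\pi(h_i)$ for each $i$ (and $g_i\in G$ by Proposition~\ref{transitive}).

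Then I would conclude directly. Each $h_i$ is a pre-image of $g_i$ under $\pi$, so $\partial(g_i)\le\partial(h_i)$ by the very definition of length on $G$. Summing over $i$ and invoking Lemma~\ref{ceil} for $h$,
\[
\sum_{i=1}^{p}\partial(g_i)\le\sum_{i=1}^{p}\partial(h_i)\le\partial(h)=\partial(g),
\]
while for each $i$ we get $\partial(g_i)\le\partial(h_i)\le\lceil\partial(h)/2\rceil=\lceil\partial(g)/2\rceil$. For the final clause, $\partial(g)>1$ means $\partial(g)\ge2$, and $\lceil n/2\rceil<n$ for every integer $n\ge2$, so $\partial(g_i)\le\lceil\partial(g)/2\rceil<\partial(g)$.

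I do not expect any genuine obstacle: the lemma is, as advertised, a direct consequence of Lemma~\ref{ceil}. The only two points that require a small amount of care are (a) that a minimal‑length pre‑image of $g$ can be chosen inside $L(H)$, which uses the well‑definedness of $\epsilon_a$ from Proposition~\ref{abelianization}, and (b) that $\pi$ intertwines $\Phi$ with $\psi_1$ coordinatewise, which is built into the construction of Section~4 and Proposition~\ref{K}. Both are routine.
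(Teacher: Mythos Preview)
Your proposal is correct and follows exactly the approach implicit in the paper: the paper states this lemma without proof, merely recording it as a direct consequence of Lemma~\ref{ceil}, and your argument spells out precisely the routine transport along $\pi$ that makes this work. The two points you flag as needing care (that minimal pre-images lie in $L(H)$, and that $\pi$ intertwines $\Phi$ with $\psi_1$) are indeed the only content, and you handle both correctly.
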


\subsection{Length reduction} \label{sec:theta-def} We continue to
consider a multi-edge spinal group $G = G_{\mathbf{E}} =\langle
a,b_1,\ldots,b_r\rangle$ acting on the regular $p$-adic rooted tree
$T$, for an odd prime $p$.  Here $\mathbf{E}$ is the $r$-tuple of
defining vectors $\mathbf{e}_i = (e_{i,1},\ldots,e_{i,p-1})$, for
$i\in \{1,\ldots,r\}$.  In preparation for the investigation of
maximal subgroups of $G$, we introduce in the present section two
length decreasing maps $\Theta_1, \Theta_2 \colon G' \rightarrow G'$.
Based on Lemma~\ref{dagger}, we assume that $e_{1,1}=1$.  Also, let
\[
n = \max \big\{ j \in \{1,\ldots,p-1\} \mid e_{1,j}\not \equiv 0 \pmod{p}
\big\}.
\]
Generically, we have $n\ge 2$, while the exceptional case $n=1$
corresponds to $b_1$ of the form $\psi_1(b_1) = (a,1,\ldots,1,b_1)$.
The special case $n=1$ will be dealt with slightly differently in what
follows, and it only applies to a specific family of groups.  We
remark that, if $G$ is torsion, then Theorem~\ref{torsion}
automatically yields $n \geq 2$.

Clearly, $G'=\langle [a,b_i] \mid i\in \{1,\ldots,r\}\rangle^G$ is a
subgroup of $\stab_G(1)=\langle b_1,\ldots,b_r\rangle ^G$.  Every
$g\in \stab_G(1)$ has a decomposition
\[
\psi_1(g)= (g_1,\ldots,g_p),
\]
where each $g_j \in U_{u_j}^G \cong G$ is an element of the upper
companion group acting on the subtree rooted at a first level vertex
$u_j$, $j \in \{1,\ldots,p\}$, and we define
\begin{equation} \label{eq:def-phi} \varphi_j\colon \stab_G(1)
  \rightarrow \Aut(T_{u_j}), \quad \varphi_j(g)=g_j.
\end{equation}
It is customary and useful to write $(g_1,\ldots,g_p)$ in place of $g
\in \stab_G(1)$ to carry out certain computations.

We are interested in projecting, via $\varphi_p$, the first level
stabilizer $\stab_M(1)$ of a subgroup $M \leq G$, containing $b_1$ and
an `approximation' $az \in a G'$ of $a$, to a subgroup of
$\Aut(T_{u_p})$.  Writing $\psi_1(z) =(z_1,\ldots,z_p)$ and
conjugating $b_1$ by $(az)^{-1}$, we obtain
\begin{align*}
  b_1^{(az)^{-1}} & = (z\cdot
  (a,a^{e_{1,2}},\ldots,a^{e_{1,p-1}},b_1)\cdot z^{-1})^{a^{-1}} \\
  & = (a^{z_1^{-1}},(a^{e_{1,2}})^{z_2^{-1}}, \ldots,
  (a^{e_{1,p-1}})^{z_{p-1}^{-1}}, b_1^{z_p^{-1}})^{a^{-1}} \\
  &
  =((a^{e_{1,2}})^{z_2^{-1}}, \ldots, (a^{e_{1,p-1}})^{z_{p-1}^{-1}},
  b_1^{z_p^{-1}}, a^{z_1^{-1}}).
\end{align*}
Therefore
\[
\varphi_p(b_1^{(az)^{-1}}) = a^{z_1^{-1}} = a[a,z_1^{-1}]
\]
and this motivates us to define
\[
\Theta_1\colon G' \rightarrow G', \quad \Theta_1(z)=[a,z_1^{-1}].
\]

The map $\Theta_2$ is obtained similarly. As $e_{1,n} \not \equiv 0$,
we find $k \in \mathbb{Z}/p\mathbb{Z}$ such that $k e_{1,n} \equiv 1
\pmod{p}$.  Writing $\widetilde{e}_{1,j} = k e_{1,j}$ for $j \in
\{1,\ldots,p-1\}$, we obtain by induction on $p-n$ that
\begin{align*}
  \big(b_1^k\big)^{(az)^{p-n}} & = \big( z^{-1} \cdot \big(
  a^{\widetilde{e}_{1,1}}, \ldots, a^{\widetilde{e}_{1,n-1}}, a,
  a^{\widetilde{e}_{1,n+1}},
  \ldots, a^{\widetilde{e}_{1,p-1}}, b_1^k \big)^a \cdot z \big)^{(az)^{p-n-1}} \\
  & = \big( \big( b_1^k \big)^{z_1}, \big(a^{\widetilde{e}_{1,1}}
  \big)^{z_2}, \ldots, (a^{\widetilde{e}_{1,n-1}})^{z_n}, a^{z_{n+1}},
  *, \ldots, *)^{(az)^{p-n-1}} \\
  & =(*,\ldots,*,a^{z_{n+1} z_{n+2} \cdots z_p}),
\end{align*}
where the symbols $*$ represent unspecified components.  Therefore
\[
\varphi_p \big( \big(b_1^k \big)^{(az)^{p-n}} \big) = a^{z_{n+1}
  \cdots z_p} = a [a,z_{n+1} \cdots z_p]
\]
and this motivates us to define
\[
\Theta_2\colon G'\rightarrow G', \quad
\Theta_2(z)=[a,z_{n+1} \ldots z_p].
\]

To deal with the case $n=1$, we define $\boldsymbol{\mathcal{E}}$ to
be the family of all multi-edge spinal groups $G=\langle
a,b_1,\ldots,b_r \rangle$ that satisfy
\begin{equation} \label{eq:H}
  \begin{cases}
    \psi_1(b_1) =(a,1,\ldots,1,b_1) & \\
    e_{i,1}\equiv 1 \pmod{p} & \text{for every $i \in \{1,\ldots,r\}$} \\
    e_{i,p-1} \not \equiv 0 \pmod{p} & \text{for at least one $i \in
    \{1,\ldots,r\}$.}
  \end{cases}
\end{equation}
We remark that, by Theorem~\ref{torsion}, there are no torsion groups
in $\boldsymbol{\mathcal{E}}$.

\begin{theorem} \label{theta} Let $G=\langle a,b_1,\ldots,b_r \rangle$
  be a multi-edge spinal group acting on the regular $p$-adic rooted
  tree $T$, for an odd prime $p$. Suppose $G$ is not
  $\Aut(T)$-conjugate to a group in $\boldsymbol{\mathcal{E}}$. Then
  the length $\partial (z)$ of an element $z \in G'$ decreases under
  repeated applications of a suitable combination of the maps
  $\Theta_1$ and $\Theta_2$ down to length $0$ or~$2$.
\end{theorem}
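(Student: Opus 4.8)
The plan is to prove the sharper statement that for every $z \in G'$ with $\partial(z) = m \ge 3$, some composition of at most two of the maps $\Theta_1, \Theta_2$ sends $z$ to an element of length strictly smaller than $m$; since $\partial$ takes values in $\mathbb{N} \cup \{0\}$ and $G'$ has no element of length $1$ (such an element would have $\epsilon_{b_i} \not\equiv 0$ for some $i$), iterating then drives the length of any $z \in G'$ down to $0$ or $2$. Throughout I would use: for $g \in G$, $\partial([a,g]) \le 2\partial(g)$, with $[a,g] = 1$ when $\partial(g) = 0$ (equivalently $g \in \langle a \rangle$); the identity $[a,g] = [a,\, a^{-\epsilon_a(g)}g]$, where $a^{-\epsilon_a(g)}g \in \stab_G(1)$; and, for $v \in \stab_G(1)$ with $\psi_1(v) = (v_1, \ldots, v_p)$, the fact that the $j$-th coordinate of $[a,v]$ has the form $v_{j-1}v_j^{-1}$, with indices read modulo $p$ (the precise convention being immaterial). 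Combined with Lemma~\ref{shortening} applied to $v$, this last point shows that every coordinate of $[a,v]$ has length at most $\partial(v_{j-1}) + \partial(v_j) \le 2\lceil \partial(v)/2 \rceil$.

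The decisive input is the following inequality. Writing $\psi_1(z) = (z_1, \ldots, z_p)$ and using $n \ge 1$, so that $1, n+1, \ldots, p$ are pairwise distinct indices, Lemma~\ref{shortening} gives
\[
\partial(z_1) + \partial(z_{n+1} \cdots z_p) \le \partial(z_1) + \sum\nolimits_{j=n+1}^{p} \partial(z_j) \le \sum\nolimits_{j=1}^{p} \partial(z_j) \le m,
\]
so at least one of $\partial(z_1)$, $\partial(z_{n+1} \cdots z_p)$ is at most $\lfloor m/2 \rfloor$. If one of them is $0$, the corresponding map ($\Theta_1$ for $z_1$, $\Theta_2$ for $z_{n+1} \cdots z_p$) already sends $z$ to $1$; if the smaller one is positive but strictly below $m/2$, then doubling it stays below $m$, so the corresponding $\Theta_i$ shortens $z$ in one step. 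Either way a single step works; in particular this settles all $z$ with $\partial(z)$ odd and all $z$ with $\partial(z) = 3$. The only configuration left — call it the \emph{balanced case} — is: $m$ even, $m \ge 4$, and $\partial(z_1) = \partial(z_{n+1} \cdots z_p) = m/2$ (chasing the equalities above, this also forces $\partial(z_j) = 0$ for $2 \le j \le n$ and no cancellation in $z_{n+1} \cdots z_p$).

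Next I would treat the balanced case when $n \ge 2$, which in particular covers every torsion $G$ (where $n \ge 2$ by Theorem~\ref{torsion}, so Theorem~\ref{start} requires nothing more). Put $w = \Theta_1(z) = [a,v]$ with $v = a^{-\epsilon_a(z_1^{-1})}z_1^{-1} \in \stab_G(1)$, so $\partial(v) = \partial(z_1) = m/2$ and, by Lemma~\ref{shortening}, $\partial(v_j) \le \lceil m/4 \rceil$, which is $< m/2$ since $m \ge 4$. Since $\partial(w) \le 2\partial(v) = m$, we may assume $\partial(w) = m$. It then suffices to show that $w$ is not itself balanced: applying the previous paragraph to $w$ (of length $m \ge 3$) yields a single further $\Theta_i$ with $\partial(\Theta_i(w)) < m$, hence the two-step reduction $\Theta_i \circ \Theta_1$. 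If $w$ were balanced, its first coordinate $w_1 = v_p v_1^{-1}$ would have length $m/2$; as $\partial(v_p) + \partial(v_1) \le \sum_j \partial(v_j) \le \partial(v) = m/2$, this forces $\partial(v_j) = 0$ for $j \ne 1, p$, so $v_2 \in \langle a \rangle$ and the coordinate $w_2 = v_1 v_2^{-1}$ has length $\partial(v_1)$. But balancedness of $w$ with $n \ge 2$ demands $\partial(w_2) = 0$, whence $\partial(v_1) = 0$ and therefore $\partial(v_p) = m/2 > \lceil m/4 \rceil$, a contradiction.

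It remains to deal with $n = 1$, i.e.\ $\psi_1(b_1) = (a, 1, \ldots, 1, b_1)$, where (as recorded before the theorem) $G$ is necessarily non-torsion. Here the previous argument genuinely breaks down, because $\Theta_1$ applied to a balanced element may again be balanced: the relevant coordinate products under $\Theta_1$ and $\Theta_2$ telescope without any shortening. This case concerns only a very restricted family of groups, and it is precisely here that the hypothesis \emph{$G$ not $\Aut(T)$-conjugate to a member of $\boldsymbol{\mathcal{E}}$} is used. When $r \ge 2$ one can pass to a directed generator given by a suitable power of a product $b_1 b_i^c$, whose defining vector has first entry $1$ and a nonzero entry in some later position, thereby reducing to the case $n \ge 2$ already treated; the remaining GGS-type groups are handled by a separate, more careful computation that relies on the additional vanishing of defining-vector entries forced by the exclusion of $\boldsymbol{\mathcal{E}}$. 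I expect this $n = 1$ analysis to be the main obstacle, the cases $n \ge 2$ being essentially forced once the two theta maps are combined with Lemma~\ref{shortening} as above.
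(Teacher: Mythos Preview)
Your treatment of the case $n\ge 2$ is correct and takes a genuinely different route from the paper. In the balanced situation the paper applies $\Theta_2$ first and carries out an explicit word analysis of $\Theta_2(z)$, written as a product of conjugates $\overline{c}_j^{\,a^*}$; it then tracks, case by case in the tail exponent $s_{\mu+1}$, which of these factors land as directed generators and which as powers of $a$ in the relevant coordinates, eventually forcing a specific factor $\overline{c}^{\,a^2}$ that breaks the pattern. Your argument via $\Theta_1$ is more structural: writing $\Theta_1(z)=[a,v]$ with $\partial(v)=m/2$ and using that each coordinate of $[a,v]$ is a product of two consecutive $v_j$'s, you combine the additivity part of Lemma~\ref{shortening} (forcing $\partial(v_j)=0$ for $j\ne 1,p$) with its ceiling bound $\partial(v_p)\le\lceil m/4\rceil<m/2$ to rule out balancedness of $\Theta_1(z)$. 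This avoids all explicit word manipulations and gives the two-step reduction cleanly; since $n\ge 2$ whenever $G$ is torsion, this already suffices for Theorem~\ref{start}.

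The $n=1$ paragraph, however, is a genuine gap. Your proposed reduction ``pass to a directed generator $b_1 b_i^c$ with a later nonzero entry'' changes the maps $\Theta_1,\Theta_2$ themselves, since these are defined with reference to the fixed generator $b_1$ and its parameter~$n$; you would then be proving a statement about \emph{different} theta maps, not the ones in the theorem. Nor do you handle the GGS case $r=1$ with $\psi_1(b_1)=(a,1,\ldots,1,b_1)$, which lies outside $\boldsymbol{\mathcal{E}}$ and must be covered. The paper keeps $b_1$ fixed and instead extracts from the hypothesis $G\notin\boldsymbol{\mathcal{E}}$ that $e_{i,p-1}\equiv 0$ for every $i$; this vanishing is precisely what makes the explicit $\Theta_2$-analysis go through when $n=1$ (the factor $\overline{c}_2^{\,a^2}$, respectively $\overline{c}_1^{\,a^2}$, then contributes a trivial coordinate where a nontrivial power of $a$ was required), with a short extra computation needed only for $m=4$. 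Your sketch does not supply any substitute for this.
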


\begin{proof}
  Let $z \in G'$.  We observe that $\partial(z) \neq 1$; see
  Proposition~\ref{abelianization}.  Suppose that $\partial (z) = m
  \ge 3$.  Then $z\in G'\subseteq \stab_G(1)$ has a decomposition
  \[
  \psi_1(z) = (z_1,\ldots,z_p).
  \]
  From Lemma~\ref{shortening} and \eqref{eq:product-g1-g2} we obtain
  $\partial(z_j) \le \lceil \frac{m}{2} \rceil$ for $j \in
  \{1,\ldots,p\}$ and
  \[
  \partial(z_1) + \partial(z_{n+1}\cdots z_p) \le m.
  \]
  If $\partial (z_1)< \frac{m}{2}$ then $\partial (\Theta_1 (z))<m$,
  and likewise if $\partial (z_{n+1}\cdots z_p) < \frac{m}{2}$ then
  $\partial(\Theta_2(z))<m$.  Hence we may suppose that $m = 2\mu$ is
  even and
  \[
  \partial(z_1) = \partial(z_{n+1} \cdots z_p) = \mu.
  \]

  We write $z_{n+1} \cdots z_p$ as
  \[
  a^{s_1} \cdot c_1 \cdot a^{s_2} \cdot \ldots \cdot a^{s_\mu} \cdot
  c_\mu \cdot a^{s_{\mu+1}},
  \]
  where $s_1, \ldots, s_{\mu+1} \in \mathbb{Z}/p\mathbb{Z}$ with $s_i
  \not \equiv 0 \pmod{p}$ for $i\in \{2,\ldots,\mu\}$ and
  $c_1,\ldots,c_\mu \in \langle b_1,\ldots, b_r \rangle \setminus
  \{1\}$, and distinguish two cases.  To increase the readability of
  exponents we use at times also the notation $s(i) = s_i$.

  \smallskip

  \noindent \underbar{Case 1:} $s_{\mu+1} \equiv 0 \pmod{p}$.
  Expressing
  \[
  \Theta_2(z)=[a,z_{n+1}\cdots z_p] =[a,a^{s_1} c_1 a^{s_2} \cdots
  a^{s_\mu} c_\mu]
  \]
  as a product of conjugates of the $c_i^{\pm 1}$ by powers of $a$ and
  relabelling the $c_i^{\pm 1}$ as $\overline{c}_j$ for $j \in \{1,
  \ldots, m \}$, we get
  \begin{equation} \label{eq:4.10} \Theta_2(z) = \overline{c}_1^{\,a}
    \cdot \overline{c}_2^{\,a^{1+s(\mu)}} \cdot \ldots \cdot
    \overline{c}_{\mu}^{\,a^{1+ s(\mu)+ \ldots + s(2)}} \cdot
    \overline{c}_{\mu +1}^{\, a^{s(\mu)+ \ldots + s(2)}} \cdot \ldots
    \cdot \overline{c}_{m-1}^{\,a^{s(\mu)}} \cdot \overline{c}_m.
  \end{equation}
  Consider now
  \[
  \psi_1(\Theta_2(z)) = ((\Theta_2(z))_1,\ldots,(\Theta_2(z))_p).
  \]
  If $\partial((\Theta_2(z))_1) < \mu$ then
  $\Theta_1(\Theta_2(z))$ has length less than $m$.  Hence we suppose
  \[
  \partial((\Theta_2(z))_1) =\mu.
  \]

  Using the symbol $*$ for unspecified exponents, we deduce
  from~\eqref{eq:4.10} that the first components
  $(\overline{c}_j^{\,a^*})_1$ for odd $j\in \{1,\ldots,m-1\}$ must be
  non-trivial elements of $\langle b_1,\ldots,b_r \rangle$, and the
  $(\overline{c}_j^{\,a^*})_1$ for even $j \in \{2,\ldots,m-2\}$ must
  be non-trivial powers of $a$.  In particular, looking at the
  $(m-1)$th term we require $s_\mu \equiv 1 \pmod{p}$.  This implies
  that the second factor in~\eqref{eq:4.10} is
  $\overline{c}_2^{\,a^2}$.

  In the special case $n=1$, $e_{i,p-1}\equiv 0$ (mod $p$) for 
  every $i\in \{1,\ldots,r\}$ and so we immediately get a
  contradiction, because $(\overline{c}_2^{\,a^2})_1$ contributes a
  trivial factor $1$ to $(\Theta_2(z))_1$ instead of a non-trivial
  power of $a$.

  In the generic case $n \geq 2$ we claim
  $\partial((\Theta_2(z))_{n+1}\cdots (\Theta_2(z))_p ) <\mu$, leading
  to $\partial(\Theta_2(\Theta_2(z))) < m$.  Indeed, only factors
  $\overline{c}_j^{\,a^*}$ in~\eqref{eq:4.10} for even $j \in
  \{2,\ldots,m\}$ can contribute non-trivial elements of $\langle
  b_1,\ldots,b_r \rangle$ to the product $(\Theta_2(z))_{n+1}\cdots
  (\Theta_2(z))_p$.  But since $n \geq 2$, the second factor
  $\overline{c}_2^{\,a^2}$ in~\eqref{eq:4.10} contributes only a power
  of~$a$.

  \medbreak

  \noindent\underbar{Case 2:} $s_{\mu+1} \not \equiv 0 \pmod{p}$.
  Similarly as in Case 1, we write
  \begin{equation} \label{eq:4.11} \Theta_2(z) =
    \overline{c}_1^{\,a^{1+s(\mu+1)}} \cdot
    \overline{c}_2^{\,a^{1+s(\mu+1)+s(\mu)}} \cdot \ldots \cdot    
    \overline{c}_{m-1}^{\,a^{s(\mu+1)+s(\mu)}} \cdot
    \overline{c}_m^{\,a^{s(\mu+1)}},
  \end{equation}
  where the $c_i^{\pm 1}$ are relabelled as $\overline{c}_j$ for $j \in
  \{1, \ldots, m \}$.  As before, it suffices to show that
  $\partial((\Theta_2(z))_1)<\mu$ or
  $\partial((\Theta_2(z))_{n+1} \cdots (\Theta_2(z))_p )<\mu$.

  Suppose $\partial((\Theta_2(z))_1) = \mu$. Then either: 

  \smallbreak

  \noindent (i) $(\overline{c}_j^{\,a^*})_1$ for odd $j\in
  \{1,\ldots,m-1\}$ is a non-trivial element of $\langle
  b_1,\ldots,b_r \rangle$, and $(\overline{c}_j^{\,a^*})_1$ for even
  $j\in \{2,\ldots,m-2\}$ is a non-trivial power of $a$; or

  \smallbreak

  \noindent (ii) $(\overline{c}_j^{\,a^*})_1$ for even $j\in
  \{2,\ldots,m\}$ is a non-trivial element of $\langle b_1,\ldots,b_r
  \rangle$, and $(\overline{c}_j^{\,a^*})_1$ for odd $j\in
  \{3,\ldots,m-1\}$ is a non-trivial power of $a$.

  \smallbreak

  In case (i), we deduce from $(m-1)$th term in \eqref{eq:4.11} that
  \[
  s_{\mu+1} + s_\mu \equiv 1 \pmod{p},
  \]
  and the second term in \eqref{eq:4.11} is equal to
  $\overline{c}_2^{\,a^2}$.  We may argue as in Case~1 that
  $\partial((\Theta_2(z))_{n+1} \cdots (\Theta_2(z))_p )<\mu$ so that
  $\partial(\Theta_2(\Theta_2(z))) < m$.

  In case (ii), we deduce from the $m$th term in \eqref{eq:4.11} that
  \[
  s_{\mu+1} \equiv 1 \pmod{p},
  \]
  and the first term in \eqref{eq:4.11} is $\overline{c}_1^{a^2}$.  In
  the generic situation $n\geq2$ we argue similarly as in Case~1 that
  $\partial((\Theta_2(z))_{n+1} \cdots (\Theta_2(z))_p )<\mu$ so that
  $\partial(\Theta_2(\Theta_2(z))) < m$.  It remains to deal with the
  special situation $n=1$, which makes use of the fact that the
  defining vectors satisfy $e_{i,p-1}\equiv 0$ for every $i\in
  \{1,\ldots, r\}$.  For $m\ge 6$ the argument follows as before.  For
  $m=4$, proceeding similarly, we obtain $\Theta_2(z)=
  \overline{c}_1^{\,a^2} \overline{c}_2^{\,a} \, \overline{c}_3 \,
  \overline{c}_4^{\,a}$, so $(\Theta_2(z))_1= b a^w c$ for some
  $b,c\in \langle b_1,\ldots,b_r \rangle$ and $w\in
  \mathbb{Z}/p\mathbb{Z}$.  Thus subject to relabelling,
  \begin{equation} \label{eq:m4}
    \Theta_1(\Theta_2(z))=\overline{c}_1^{\,a}
    \overline{c}_2^{\,a^{1-w}} \overline{c}_3^{\, a^{-w}}
    \overline{c}_4.
  \end{equation}
  As before, for $\partial(\Theta_1(\Theta_2(z))_1)=2$, we need
  $(\overline{c}_3^{\,a^*})_1$ to be a non-trivial element of $\langle
  b_1,\ldots,b_r \rangle$ and $(\overline{c}_2^{\, a^*})_1$ to be a
  non-trivial power of $a$. Looking at the third term of
  \eqref{eq:m4}, we require $w=-1$. However, then $\overline{c}_2^{\,
    a^2}$ contributes a trivial factor 1 to $\Theta_1(\Theta_2(z))_1$
  instead of a non-trivial power of $a$.  Hence we see that the length
  decreases, as required.
\end{proof}


\section{Maximal subgroups}
In the present section we prove Theorem~\ref{start} about maximal
subgroups of torsion multi-edge spinal groups.  As in \cite{Pervova4},
it is convenient to phrase part of the argument in terms of proper
dense subgroups with respect to the profinite topology.

\subsection{Dense subgroups}
We recall that the cosets of finite-index subgroups of a group $G$
form a base for the \textit{profinite topology} on $G$.  The group $G$
contains maximal subgroups of infinite index if and only if it
contains proper dense subgroups with respect to the profinite
topology.  Indeed, a subgroup $H$ of $G$ is \textit{dense} with
respect to the profinite topology if and only if $G = NH$ for every
finite-index normal subgroup $N$ of~$G$.  Therefore every maximal
subgroup of infinite index in $G$ is dense and every proper dense
subgroup is contained in a maximal subgroup of infinite index.

For the rest of the section, we fix a just infinite multi-edge spinal
group $G = G_\mathbf{E} = \langle a,b_1,\ldots,b_r\rangle$ acting on
the regular $p$-adic rooted tree $T$, for an odd prime $p$. Here
$\mathbf{E}$ is the $r$-tuple of defining vectors $\mathbf{e}_i =
(e_{i,1},\ldots,e_{i,p-1})$, for $i\in \{1,\ldots,r\}$.  For a vertex
$u$ of $T$, we write 
\[
G_u = U_u^G \cong G
\]
to denote the upper companion group at the vertex~$u$.  Every subgroup
$M$ of $G$ gives rise to a subgroup 
\[
M_u = U_u^M \leq G_u \cong G.
\]

\begin{proposition}[{\cite[Proposition 3.2]{Pervova4}}] \label{3.3.3}
  Let $T$ be a spherically homogeneous rooted tree and let $G \le
  \Aut(T)$ be a just infinite group acting transitively on each layer
  of~$T$. Let $M$ be a dense subgroup of $G$ with respect to the
  profinite topology. Then
  \begin{enumerate}
  \item[(1)] the subgroup $M$ acts transitively on each layer of the
    tree $T$,
  \item[(2)] for every vertex $u \in T$, the subgroup $M_u$ is dense
    in $G_u$ with respect to the profinite topology.
  \end{enumerate}
\end{proposition}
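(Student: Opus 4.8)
The plan is to use the characterisation of density recalled just above Proposition~\ref{3.3.3}: a subgroup $M \le G$ is dense in the profinite topology precisely when $MN = G$ for every finite-index normal subgroup $N \trianglelefteq G$. For part~(1), observe that since $T$ is spherically homogeneous the $n$th level is finite, so $\stab_G(n)$ — being the kernel of the action of $G$ on the $n$th level — is a finite-index \emph{normal} subgroup of $G$. Density of $M$ gives $M\stab_G(n) = G$, so $M$ surjects onto $G/\stab_G(n)$. Since the latter acts transitively on the $n$th level (because $G$ does), so does $M$; as $n$ was arbitrary, $M$ acts transitively on every level.

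The main point in part~(2) is to transport density through the finite-index subgroup $\stab_G(u)$. I would first record the following general fact: if $H \le G$ has finite index, then $M \cap H$ is dense in $H$ with respect to the profinite topology of $H$. Indeed, let $N \trianglelefteq H$ be of finite index. Since $H$ normalises $N$, the subgroup $N$ has at most $[G:H]$ conjugates in $G$, each of finite index in $G$; hence the $G$-core $N_0 = \bigcap_{g \in G} N^g$ is a finite-index normal subgroup of $G$ with $N_0 \subseteq N \subseteq H$. Density of $M$ gives $MN_0 = G$; writing $h \in H$ as $h = m n_0$ with $m \in M$ and $n_0 \in N_0$, we get $m = h n_0^{-1} \in H$, so $m \in M \cap H$ and $h \in (M \cap H)N$. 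Thus $(M \cap H)N = H$, proving the claim.

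Now apply this with $H = \stab_G(u)$, which has finite index in $G$ because $G$ acts transitively on the finite set of vertices of level $|u|$. We obtain that $\stab_M(u) = M \cap \stab_G(u)$ is dense in $\stab_G(u)$. Let $\pi_u \colon \stab_G(u) \to G_u = U_u^G$ be the homomorphism restricting automorphisms to the subtree $T_u$; it is surjective by the definition of $U_u^G$, and $\pi_u(\stab_M(u)) = M_u$. Given a finite-index normal subgroup $N \trianglelefteq G_u$, its preimage $\pi_u^{-1}(N)$ is a finite-index normal subgroup of $\stab_G(u)$, so $\stab_M(u)\,\pi_u^{-1}(N) = \stab_G(u)$; applying the surjection $\pi_u$ yields $M_u N = G_u$. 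Hence $M_u$ is dense in $G_u$, as required.

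The only slightly delicate step is this "going down" manoeuvre: a finite-index normal subgroup $N$ of $\stab_G(u)$ need not be normal in $G$, so one must pass to its $G$-core and check that this core still has finite index in $G$, in order to localise the density of $M$ in $G$ to that of $\stab_M(u)$ in $\stab_G(u)$. Everything else is formal manipulation with cosets of finite-index normal subgroups and with the surjectivity of the restriction map $\pi_u$; in particular, the argument uses only that $G$ acts spherically transitively on $T$, not that $G$ is just infinite.
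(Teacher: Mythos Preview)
Your argument is correct. Note, however, that the paper does not actually prove this proposition: it is quoted verbatim from Pervova~\cite[Proposition~3.2]{Pervova4} and no proof is given here, so there is nothing to compare against in this text. Your proof is the standard one: level stabilizers are finite-index normal subgroups, giving~(1); and density passes to finite-index subgroups via the core construction and then forward along the surjection $\pi_u$, giving~(2). Your closing remark is also accurate: the just infinite hypothesis plays no role in your argument, and indeed it is not needed for this particular statement---it is carried along because it is the standing hypothesis in the section where the result is applied.
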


The next result extends~\cite[Lemma~3.3]{Pervova4}, which addresses
just infinite GGS-groups. Here we give a different and shorter proof
for just infinite multi-edge spinal groups.

\begin{proposition} \label{proper} Let $G$ be a just infinite
  multi-edge spinal group.  Let $M$ be a proper dense subgroup of $G$,
  with respect to the profinite topology. Then $M_u$ is a proper
  subgroup of $G_u$ for every vertex $u$ of~$T$.
\end{proposition}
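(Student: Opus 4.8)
The plan is to argue by contradiction. Suppose that $M_u = G_u$ for some vertex $u$, and choose such a $u$ of minimal level $n$. Since $M$ is proper, $M_\varnothing = M \neq G = G_\varnothing$, so $n \geq 1$. Write $u = u_j v$, where $u_j$ is a first-level vertex and $v$ is a vertex of $T_{u_j}$ at level $n-1$. Unravelling the definitions, one checks that $M_u = (M_{u_j})_v$, viewed inside $G_{u_j} \cong G$. By Proposition~\ref{3.3.3}(2) the subgroup $M_{u_j}$ is dense in $G_{u_j}$. If $M_{u_j}$ were proper in $G_{u_j}$, then, since $G_{u_j}$ is again a just infinite multi-edge spinal group, an induction on the level (the assertion being vacuous for $n=0$) would give that $(M_{u_j})_v = M_u$ is proper in $G_u$ -- a contradiction. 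Hence $M_{u_j} = G_{u_j}$, and we have reduced to the case in which $u = u_j$ lies at the first level.

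Next I would exploit density at the first level. Since $M$ is dense and $\stab_G(1)$ has finite index, $M\stab_G(1) = G$; thus $M$ contains an element $m_a$ inducing the $p$-cycle $(1\,2\,\cdots\,p)$ on the first level. As all sections of elements of $M$ lie in $G$ (Proposition~\ref{transitive}) and $G$ is fractal, conjugation by powers of $m_a$ shows that the companion subgroups $M_{u_1},\ldots,M_{u_p}$ are pairwise conjugate in $G$; in particular $M_{u_i} = G_{u_i} \cong G$ for every $i$. The crucial observation -- special to the $p$-adic tree -- is that $G$ acts on the first level only through the cyclic group $\langle a\rangle \cong C_p$, whereas $M$ acts transitively there (Proposition~\ref{3.3.3}(1)); consequently $\stab_M(u_i)$ acts trivially on the first level, i.e.\ $\stab_M(u_i) = \stab_M(1)$ for every $i$. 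Therefore $\varphi_i(\stab_M(1)) = M_{u_i} = G_{u_i}$ for each $i$, so that $\psi_1(\stab_M(1))$ is a subdirect subgroup of $G \times \cdots \times G$ ($p$ factors) contained in $\psi_1(\stab_G(1))$.

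It then remains to show that this forces $\stab_M(1) = \stab_G(1)$: granting this, $M \supseteq \langle m_a, \stab_G(1)\rangle = G$, contradicting properness. Here I would use just-infiniteness together with density. For each $i$ the rigid companion $\varphi_i(\rstab_M(u_i)_{u_i})$, where $\rstab_M(u_i) = \{g \in \stab_M(1) : \varphi_k(g)=1 \text{ for } k\neq i\}$, is a normal subgroup of $\varphi_i(\stab_M(1)) = G$; since $G$ is just infinite it is either trivial or of finite index. If it is of finite index for one (hence, by the $m_a$-conjugates, for every) $i$, then $\rstab_M(1) = \prod_i \rstab_M(u_i)$ already has finite index in $\stab_G(1)$, so $M$ has finite index in $G$ and, being dense, equals $G$ -- done. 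The remaining case, in which all rigid companions are trivial, is what I expect to be the main obstacle: one must rule out a ``twisted'' proper subdirect product. This is where density is needed in its strongest form: $M\rstab_G(m) = G$ for all $m$ forces $\psi_1(\stab_M(1))$ to be dense in $\psi_1(\stab_G(1))$ for the congruence topology, and combining this with the $m_a$-invariance of $\psi_1(\stab_M(1))$ and with just-infiniteness should yield $\stab_M(1) = \stab_G(1)$, completing the argument.
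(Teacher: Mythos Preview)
Your reduction to the first level and your treatment of the finite-index case are essentially the paper's argument (the paper writes $u = wx$ with $|w|=n-1$ and uses minimality directly, which is cleaner than your ``induction on the level'', but the content is the same). The genuine gap is in the case where all the rigid companions $\rstab_M(u_i)_{u_i}$ are trivial. What you sketch there---using density to force $\psi_1(\stab_M(1))$ to be congruence-dense in $\psi_1(\stab_G(1))$, then combining $m_a$-invariance with just-infiniteness to conclude $\stab_M(1)=\stab_G(1)$---is not a proof but a programme, and it is unclear how to carry it out: subdirect products of copies of $G$ inside $\psi_1(\stab_G(1))$ can be complicated, and nothing in your outline actually excludes a proper dense $m_a$-invariant one.

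The paper handles this case by a completely different and much shorter route, and the missing ingredient is the \emph{branch property}. Since $G$ is just infinite, Proposition~\ref{prop:special-group} shows $G$ is not $\Aut(T)$-conjugate to the exceptional GGS-group~$\mathcal{G}$, and then Proposition~\ref{prop:branch} gives that $G$ is branch; in particular $|G:\rstab_G(1)|<\infty$. Now if $\rstab_M(u_1)_{u_1}=1$ then (by $M$-transitivity on level~$1$) $\rstab_M(1)=M\cap\rstab_G(1)=1$, so $M$ embeds into the finite group $G/\rstab_G(1)$. But a dense subgroup of the infinite residually finite group $G$ surjects onto every finite quotient, hence is infinite. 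This contradiction finishes the proof with no analysis of subdirect products at all. You had already isolated the right dichotomy; what you were missing is that triviality of $\rstab_M(1)$, together with branchness, immediately bounds $|M|$.
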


\begin{proof}
  Assume on the contrary, that there exists a vertex $u$ of $T$ such
  that $M_u = G_u$.  Let $u$ be a vertex of minimal length $n$ with
  the specified property, and suppose $u=wx$ where $|w|=n-1$.  By
  Proposition~\ref{3.3.3} and induction, $M_w$ is a proper dense
  subgroup of $G_w$.  Since $G_w$ is isomorphic to $G$, we have
  $|u|=1$, say $u=u_1$ among the vertices $u_1, \ldots, u_p$ at
  level~$1$.

  Let $R = \rstab_M(u)_u$.  By our assumption, we have $R
  \trianglelefteq M_u=G_u$.  Since $G_u \cong G$ is just
  infinite, either $R$ has finite index in $G_u$, or $R$ is trivial.

  Suppose first that $R$ has finite index in $G_u$. Then
  \begin{align*}
    |G:M| & \leq |G:\rstab_M(1)| = |G:\stab_G(1)| \, |\stab_G(1):\rstab_M(1)| \\
    & \leq |G:\stab_G(1)| \, \left| \prod\nolimits_{i=1}^p G_{u_i} :
      \prod\nolimits_{i=1}^p \rstab_M(u_i)_{u_i} \right| \\
    & \leq |G:\stab_G(1)| \, |G_u : R|^p
  \end{align*}
  is finite.  But, being a proper dense subgroup, $M$ has infinite
  index in $G$.

  Hence $R$ is trivial, and so $\rstab_M(1)$ is trivial.  Thus
  \[
  |G/\rstab_G(1)| \geq |M/\rstab_M(1)| = |M|
  \]
  is infinite.  

  By Proposition~\ref{prop:special-group}, the group $G$ is not
  $\Aut(T)$-conjugate to the GGS-group $\mathcal{G}$
  in~\eqref{eq:exception}.  Hence Proposition~\ref{prop:branch} shows
  that $G$ is a branch group.  Thus $\rstab_G(1)$ has finite index in
  $G$, a contradiction.
\end{proof}

\begin{proposition}\label{proposition: first}
  Let $G= \langle a,b_1,\ldots,b_r\rangle$ be a just infinite
  multi-edge spinal group and $M$ a dense torsion subgroup of $G$,
  with respect to the profinite topology.  For each $i\in \{1,\ldots,r
  \}$, there is a vertex $u$ of $T$ and an element $g\in \stab _G(u)$
  acting on $T_u$ as $a^k$ for some $k\in \mathbb{Z}/p\mathbb{Z}$
  under the natural identification of $T_u$ and $T$, such that 
 \begin{enumerate}
 \item[(i)] $(M^g)_u=(M_u)^{a^k}$ is a dense torsion subgroup of $G_u
   \cong G$, 
 \item[(ii)] there exists $b\in \langle b_1,\ldots,b_r \rangle$ with
   $\varepsilon_{b_i}(b)\ne 0$ such that $b\in (M_u)^{a^k}$.
 \end{enumerate}
\end{proposition}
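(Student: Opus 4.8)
My plan is first to note that part (i) will be essentially automatic: for any vertex $u$ and any $g\in\stab_G(u)$ whose restriction to $T_u$ is $a^k\in G_u$, the group $(M^g)_u$ is obtained from the dense torsion group $M$ by a conjugation followed by a restriction, hence is dense in $G_u\cong G$ by Proposition~\ref{3.3.3} and is torsion. So the substance is (ii), and my plan for it is, with $i$ fixed, to locate by a descent down the tree a vertex $u$ for which $M_u$ already contains a genuine element of $\langle b_1,\ldots,b_r\rangle$ with nonzero $\varepsilon_{b_i}$; one may then take $g=1$ and $k=0$, the conjugation in the statement being extra flexibility that this argument does not need.

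I would organise the descent as a well-founded induction. Since $G'$ has finite index in $G$ by Proposition~\ref{abelianization}, density of $M$ gives $MG'=G$, so there is $m\in M$ with $\varepsilon_a(m)=0$ and $\varepsilon_{b_i}(m)\neq0$; such an $m$ lies in $\stab_M(1)$. More generally I would prove, by induction on the pair $(\partial(m),v)$ in lexicographic order — where $v$ is the exponent of $p$ in the order of $m$ — that every element $m$ of a dense torsion subgroup of a just infinite multi-edge spinal group with $\varepsilon_{b_i}(m)\neq0$ produces a vertex and an element $g$ as in the statement; applying this at a first-level vertex $u_l$ means applying it to $M_{u_l}$ (dense in $G_{u_l}\cong G$ and torsion), then transferring the conclusion back up the tree (routinely, using that $G$ is fractal, Proposition~\ref{transitive}). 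There are two straightforward cases. If $\varepsilon_a(m)=0$ and $\partial(m)=1$, then $m=c^{a^s}$ with $c\in\langle b_1,\ldots,b_r\rangle$ and $\varepsilon_{b_i}(c)=\varepsilon_{b_i}(m)\neq0$; the tuple $\psi_1(m)$ has $c$ in one coordinate and powers of $a$ in the others, so descending to that coordinate — an element of the appropriate $M_{u_l}$ — completes the proof. If $\varepsilon_a(m)=0$ and $\partial(m)\geq2$, write $\psi_1(m)=(m_1,\ldots,m_p)$; by~\eqref{eq:4.3} some $m_l$ satisfies $\varepsilon_{b_i}(m_l)\neq0$, while $\partial(m_l)\leq\lceil\partial(m)/2\rceil<\partial(m)$ by Lemma~\ref{shortening}, so the inductive hypothesis applies at $u_l$ to $m_l$.

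The step I expect to be the main obstacle is the remaining case $\varepsilon_a(m)=k\not\equiv0\pmod p$: then $m\notin\stab_G(1)$, so one cannot pass to coordinates directly. Here torsion is essential. Since $m$ induces the $p$-cycle $a^k$ on the first level, $p$ divides the (finite) order of $m$, so $m^p\in\stab_M(1)$. Writing $m=a^kw$ with $w\in\stab_G(1)$ and expanding $(a^kw)^p=w^{a^{k(p-1)}}w^{a^{k(p-2)}}\cdots w^{a^k}w$ with the aid of $(a^k)^p=1$, one checks — using $\gcd(k,p)=1$ — that every coordinate of $\psi_1(m^p)$ is a product of all $p$ coordinates of $w$, each appearing exactly once. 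By~\eqref{eq:4.3} and additivity of $\varepsilon_{b_i}$, such a coordinate has $\varepsilon_{b_i}$ equal to $\varepsilon_{b_i}(w)=\varepsilon_{b_i}(m)\neq0$; by Lemma~\ref{shortening} its length is at most $\partial(w)=\partial(m)$; and its order, which divides that of $m^p$, has $p$-exponent at most $v-1$. Hence replacing $m$ by such a coordinate of $m^p$ at the vertex $u_l$ strictly decreases $(\partial(m),v)$, so the inductive hypothesis applies. (When $G$ is itself torsion this subcase in fact returns to the case $\varepsilon_a=0$, by Theorem~\ref{torsion}.) As pairs of nonnegative integers are well-ordered lexicographically, the recursion terminates, and it can only terminate at the base case $\partial(m)=1$, $\varepsilon_a(m)=0$; extracting the surviving coordinate $c\in\langle b_1,\ldots,b_r\rangle$ there yields the vertex $u$, with $g=1$, $k=0$ and $b=c$, whence (i) and (ii).
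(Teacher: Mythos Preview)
Your argument is correct and follows essentially the same approach as the paper's proof: descend along coordinates, tracking that $\varepsilon_{b_i}$ survives in some coordinate while the length strictly decreases (Lemma~\ref{shortening}), and handle elements outside the level stabiliser by passing to a coordinate of the $p$th power, using torsion to guarantee termination. The only cosmetic differences are that you package the two descents---on length and on the $p$-part of the order---into a single lexicographic induction on $(\partial(m),v)$, and you dispense with the conjugation by descending one extra level in the base case, whereas the paper takes $u$ to be the root there and uses the conjugation by~$a^{-k}$.
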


\begin{proof}
  Clearly, it suffices to prove the claim for $i=1$.  Assuming that
  there are $u$ and $k$ such that (ii) holds, then as $G_u \cong G$
  there exists $g\in \stab_G(u)$ satisfying (i). Hence it suffices to
  show that such $u$ and $k$ exist.
  
  Since $|G : G'|$
  is finite, $G'$ is open in the profinite topology.  Thus we find $x
  \in M \cap b_1 G'$.  In particular, $x \in \stab_G(1)$ with
  $\epsilon_{b_1}(x) \not \equiv 0 \pmod{p}$.  We argue by induction
  on $\partial(x) \geq 1$.

  First suppose that $\partial(x) = 1$.  Then $x$ has the form $x =
  b^{a^k}$, where $b \in \langle b_1,\ldots , b_r \rangle$ with
  $\epsilon_{b_1}(b) \not \equiv 0 \pmod{p}$ and $k \in \{
  0,1,\ldots,p-1 \}$.  Thus choosing the vertex $u$ to be the root of
  the tree $T$, we have $b \in (M_u)^{a^{-k}}$.

  Now suppose that $\partial(x) \ge 2$.  Recall from
  \eqref{eq:def-eps-hat} and \eqref{eq:def-eps} the definition of
  $\epsilon_{b_1} (x)$, and from \eqref{eq:def-phi} the definition of
  the maps $\varphi_j \colon \stab_G(1) \rightarrow G_{u_j}$, where
  $u_1, \ldots, u_p$ denote the first level vertices of~$T$.  For any
  vertex $u$ of $T$, the subtree $T_u$ has a natural identification
  with~$T$ and $G_u \cong G$.  We freely use the symbols $a, b_1,
  \ldots, b_r$ to denote also automorphisms of $T_u$ under this
  identification.  We claim that
  \begin{equation}\label{eq:1}
    \epsilon_{b_1}(\varphi_1 (x)) + \ldots + \epsilon_{b_1}(\varphi_p
    (x)) \equiv \epsilon_{b_1} (x) \not \equiv 0 \pmod{p}. 
  \end{equation}
  To see this, write $x$ as a product of conjugates $b_i^{a^*}$ of the
  directed generators $b_i$, $i \in \{1,\ldots,r\}$, by powers~$a^*$,
  where the symbol $*$ represents unspecified exponents.  Then
  $\epsilon_{b_1} (x)$ is the number of factors of the form
  $b_1^{a^*}$.  Each of these factors contributes a directed
  automorphism $b_1$ in a unique coordinate, and none of the other
  factors $b_2^{a^*}, \ldots, b_r^{a^*}$ contributes a $b_1$ in any of
  the coordinates. Hence \eqref{eq:1} holds.

  By~\eqref{eq:1}, there exists $j \in \{ 1,\ldots,p \}$ such that
  $\epsilon_{b_1} (\varphi_j(x))\not \equiv 0 \pmod{p}$.  Moreover,
  Lemma~\ref{shortening} shows that
  \begin{equation} \label{eq:phi-j-estimate}
    \partial(\varphi_j(x)) \le \lceil \partial(x) / 2 \rceil \le 
    (\partial(x) + 1)/2 < \partial(x).
  \end{equation}

  Suppose that $\tilde x = \varphi_j(x) \in M_{u_j}$ belongs to
  $\stab_{G_{u(j)}}(1)$, where we write $u(j) = u_j$ for
  readability. By Proposition~\ref{3.3.3}, the subgroup $M_{u_j}$ is
  dense in $G_{u_j} \cong G$.  Since $\epsilon_{b_1}(\tilde x) \not
  \equiv 0 \pmod{p}$ and $\partial(\tilde x) < \partial(x)$, the
  result follows by induction.

  Now suppose that $\varphi_j (x) \not \in \stab_{G_{u(j)}}(1)$.  For
  $\ell \in \{ 1, \ldots, p\}$ we claim that
  \begin{equation}\label{eq:2}
    \epsilon_{b_1}(\varphi_\ell (\varphi_j (x)^p)) \equiv
    \epsilon_{b_1} (\varphi_j (x)) \not \equiv 0 \pmod{p}.
  \end{equation}
  To see this, observe that $\varphi_j (x)$ is of the form
  \[
  \varphi_j(x) =a^k h ,
  \]
  for $k \not \equiv 0 \pmod{p}$ and $h \in \stab_{G_{u(j)}}(1)$ with
  $\psi_1(h)= (h_1,\ldots, h_p)$, say.  Hence raising $\varphi_j(x)$
  to the prime power $p$, we get
  \[
  \varphi_j(x)^p = (a^k h)^p = {h^{a^{(p-1)k}}} {h^{a^{(p-2)k}}}
  \cdots h^{a^k} h,
  \]
  and thus for $\ell \in \{ 1, \ldots, p\}$,
  \[
  \varphi_\ell (\varphi_j (x)^p) \equiv h_1 h_2 \cdots h_p
  \pmod{G_{u_{j\ell}}'}.
  \]
  Here $u_{j\ell}$ denotes the $\ell$th descendant of $u_j$.  Arguing
  similarly as for~\eqref{eq:1}, this yields
  \[
  \epsilon_{b_1}(\varphi_\ell(\varphi_j(x)^p)) \equiv
  \epsilon_{b_1}(h_1)+\ldots +\epsilon_{b_1}(h_p) \equiv
  \epsilon_{b_1}(h) \equiv \epsilon_{b_1}(\varphi_j(x)) \pmod{p}
  \]
  and \eqref{eq:2} holds.

  Furthermore, we claim that
  \begin{equation} \label{eq:got-shorter}
    \partial(\varphi_\ell (\varphi_j(x)^p)) \le \partial (\varphi_j
    (x)) < \partial(x). 
  \end{equation}
  The second inequality comes from~\eqref{eq:phi-j-estimate}.  To see
  that the first inequality holds, we note that
  \[
  \varphi_\ell (\varphi_j (x)^p) = \varphi_\ell (h^{a^{(p-1)k}})
  \cdots \varphi_\ell (h^{a^k}) \varphi_\ell (h),
  \]
  and $\partial(\varphi_j(x)) = \partial (h)$.  We write $h$ as a
  product of $\partial(h)$ conjugates $c_j^{a^*}$ of directed
  automorphisms $c_j \in \langle b_1,\ldots,b_r \rangle$, where the
  symbol $*$ represents unspecified exponents.  Each factor
  $c_j^{a^*}$ contributes a directed automorphism $c_j$ in a unique
  coordinate and powers of $a$ in all other coordinates.  Focusing on
  the $\ell$th coordinate, we can write $\varphi_\ell (\varphi_j
  (x)^p)$ as a product of powers of $a$ and the $\partial(h)$ directed
  automorphisms $c_j \in \langle b_1, \ldots, b_r \rangle$. Hence
  \eqref{eq:got-shorter} holds.

  If $\tilde x = \varphi_\ell(\varphi_j(x)^p) \in M_{u_{j \ell}}$
  belongs to $\stab_{G_{u(j \ell)}}(1)$, we argue as follows.  By
  Proposition~\ref{3.3.3}, the subgroup $M_{u_{j \ell}}$ is dense in
  $G_{u_{j \ell}} \cong G$.  Since $\epsilon_{b_1}(\tilde x) \not
  \equiv 0 \pmod{p}$ and $\partial(\tilde x) < \partial(x)$, the
  result follows by induction.
 
  In general, we apply the operation $y \mapsto \varphi_\ell(y^p)$
  more than once.  Since $M$ is a torsion group, $x \in \stab_M(1)$
  and $\varphi_j(x)$ have finite order.  Clearly, if $y \in G$ has
  finite order then $\varphi_\ell(y^p)$ has order strictly smaller
  than~$y$.  Thus after finitely many iterations, we inevitably reach
  an element 
  \[
  \tilde x = \varphi_\ell( \varphi_\ell( \cdots
  \varphi_\ell(\varphi_\ell(\varphi_j(x)^p)^p)^p \cdots)^p ) \in
  M_{u_{j \ell \cdots \ell}}
  \]
  which in addition to the inherited properties $\epsilon_{b_1}(\tilde
  x) \not \equiv 0 \pmod{p}$ and $\partial(\tilde x) < \partial(x)$
  satisfies $\tilde x \in \stab_{G_{u(j \ell \cdots \ell)}}(1)$.  As
  before, the proof concludes by induction.
\end{proof}

Recall the definition of the family $\boldsymbol{\mathcal{E}}$ of
groups by means of~\eqref{eq:H}.

\begin{proposition}\label{proposition: second}
  Let $G= \langle a,b_1,\ldots,b_r\rangle$ be a multi-edge spinal
  group. Suppose $G$ is not $\Aut(T)$-conjugate to a group in
  $\boldsymbol{\mathcal{E}}$.  Let $M$ be a dense subgroup of $G$, with respect to
  the profinite topology, and suppose that $b_1\in M$. Then there
  exist a vertex $u$ of $T$ and an element $g\in \stab_G(u)$ acting on
  $T_u$ as $d\in \langle b_1,\ldots,b_r \rangle$ under the natural
  identification of $T_u$ and $T$, such that
 \begin{enumerate}
 \item[(i)] $(M^g)_u=(M_u)^d$ is a dense subgroup of $G_u\cong G$, 
 \item[(ii)] $a,b_1\in(M_u)^d$.
 \end{enumerate}
\end{proposition}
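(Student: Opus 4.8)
The plan is to bootstrap from the hypotheses ($M$ dense, $b_1\in M$, possibly $a\notin M$) to a section of a conjugate of $M$ containing both $a$ and $b_1$, using the theta maps of Section~\ref{sec:theta-def} as the engine. By Lemma~\ref{dagger} we may assume $e_{1,1}=1$; put $n=\max\{j\mid e_{1,j}\not\equiv 0\pmod p\}$. Since $|G:G'|<\infty$ and $M$ is dense, $G=MG'$, so there is $z\in G'$ with $az\in M$; by Proposition~\ref{abelianization} we have $\partial(z)\neq 1$, and if $\partial(z)=0$ then $z=1$, whence $a\in M$ and we are done with $u=\varnothing$, $d=g=1$. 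So assume $\partial(z)\geq 2$.

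The engine is as follows. As $b_1,az\in M$ and $\stab_G(1)\trianglelefteq G$, the conjugate $b_1^{(az)^{-1}}$ lies in $\stab_M(1)$, and the computation in Section~\ref{sec:theta-def} shows $\varphi_p\big(b_1^{(az)^{-1}}\big)=a\,\Theta_1(z)$; likewise, with $k$ chosen so that $ke_{1,n}\equiv 1$, the element $(b_1^k)^{(az)^{p-n}}\in\stab_M(1)$ satisfies $\varphi_p\big((b_1^k)^{(az)^{p-n}}\big)=a\,\Theta_2(z)$. Since $\varphi_p(b_1)=b_1$ as well, the section $M_{u_p}=U_{u_p}^M$ contains $b_1$ together with $a\,\Theta_1(z)$ and $a\,\Theta_2(z)$, and by Proposition~\ref{3.3.3} it is dense in $G_{u_p}\cong G$ (using that $G$ is just infinite and, by Proposition~\ref{transitive}, transitive and fractal). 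Thus $M_{u_p}$ satisfies the hypotheses again, now with an approximation of $a$ of length $\partial(\Theta_i(z))$. Iterating, and choosing at each step the map from $\{\Theta_1,\Theta_2\}$ dictated by Theorem~\ref{theta} — which applies because $G$ is not $\Aut(T)$-conjugate to a group in $\boldsymbol{\mathcal{E}}$ — I would reach, after finitely many steps, a vertex $w$ and $z'\in G'_w$ with $az'\in M_w$, $b_1\in M_w$, $M_w$ dense in $G_w\cong G$ and $\partial(z')\in\{0,2\}$; if $\partial(z')=0$ the proof concludes as above.

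The hard part will be the terminal case $\partial(z')=2$. Here $z'\in G'$ forces all $\epsilon_{b_i}(z')$ to vanish, so the normal form shows that, up to conjugation by a power of $a$, $z'=[a^s,c]$ with $s\not\equiv 0$ and $c\in\langle b_1,\ldots,b_r\rangle\setminus\{1\}$, and that $\psi_1(z')$ has exactly two length-$1$ coordinates. If $\varphi_1(z')$ is a power of $a$ then $\Theta_1(z')=1$ and $\varphi_p\big(b_1^{(az')^{-1}}\big)=a\in M_{wu_p}$, finishing with $u=wu_p$, $d=1$; and if the two length-$1$ coordinates of $\psi_1(z')$ both lie among positions $1,\ldots,n$ then $\Theta_2(z')=1$ and we finish likewise. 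Otherwise, with $n\geq 2$, a direct computation gives $\varphi_p\big(b_1^{(az')^{-1}}\big)=a\,\Theta_1(z')$ with $\Theta_1(z')=[a,c^{\pm1}]^{a^f}$ for some $f$: if $f\equiv 0$ this is already $a^{d_0}$ with $d_0\in\langle b_1,\ldots,b_r\rangle$, while if $f\not\equiv 0,1$ (resp.\ $f\equiv 1$, using $n\geq 2$) one more engine step kills the remaining commutator via $\Theta_1$ (resp.\ $\Theta_2$) and produces $a$. Whenever we land on an element $a^{d_0}$, $d_0\in\langle b_1,\ldots,b_r\rangle$, in a dense section $M_v$ containing $b_1$, we get $a\in(M_v)^{d_0^{-1}}$, and since $\langle b_1,\ldots,b_r\rangle$ is abelian also $b_1=b_1^{d_0^{-1}}\in(M_v)^{d_0^{-1}}$. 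The degenerate case $n=1$ (which restricts $G$ to a special family, so that $G\notin\boldsymbol{\mathcal{E}}$ forces extra relations, e.g.\ $e_{i,p-1}\equiv 0$ for all $i$) is treated by the same kind of argument, as in the proof of Theorem~\ref{theta}. In every case we obtain a vertex $u$ and $d\in\langle b_1,\ldots,b_r\rangle$ with $a,b_1\in(M_u)^d$ and $(M_u)^d$ dense in $G_u$; finally, since $G$ is fractal there is $g\in\stab_G(u)$ acting on $T_u$ as $d$, and then $(M^g)_u=(M_u)^d$, which completes the proof.
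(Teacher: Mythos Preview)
Your proof is correct and follows the paper's approach: choose $az\in M$ with $z\in G'$, push down via $\varphi_p$ using the theta maps and Theorem~\ref{theta} to reduce to $\partial(z)\in\{0,2\}$, then dispatch the length-$2$ terminal case by a brief explicit computation. Your organization of the terminal case differs only cosmetically (you separate off the subcase where both directed coordinates sit in positions $1,\ldots,n$ and kill it directly with $\Theta_2$, whereas the paper absorbs this into its Case~2 via $\Theta_1$), and your hand-wave for $n=1$ matches the paper's own one-line treatment; note also that your appeal to $G$ being just infinite when invoking Proposition~\ref{3.3.3} is justified by the standing hypothesis declared at the start of Section~5.
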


\begin{proof}
  Akin to the proof of Proposition \ref{proposition: first}, it
  suffices to establish the existence of $u$ and $d$ such that (ii)
  holds. Observe that $G'$ is open in $G$.  Since $M$ is dense in $G$,
  there is $z \in G'$ such that $az \in M$.  Write
  $\psi_1(z)=(z_1,\ldots,z_p)$.

  Let $u_p$ denote the $p$th vertex at level~$1$.  The coordinate map
  $\varphi_p$ allows us to restrict $\stab_M(1)$ to $M_{u_p}$.
  Clearly, $b_1 \in M$ implies $b_1 = \varphi_p(b_1) \in M_{u_p}$.
  Based on Lemma~\ref{dagger}, we assume that the defining vector
  $\mathbf{e}_1$ for $b_1$ has first coordinate $e_{1,1}=1$.  Consider
  the theta maps $\Theta_1, \Theta_2$ defined in
  Section~\ref{sec:theta-def}, with reference to~$b_1$.  By their
  definition, $a \, \Theta_1(z)$ and $a \, \Theta_2(z)$ belong to
  $M_{u_p}$.  Moreover, repeated application of $\varphi_p$
  corresponds to repeated applications of $\Theta_1$ and $\Theta_2$.
  By Proposition~\ref{3.3.3} and Theorem~\ref{theta}, we may assume
  that $\partial(z) \in \{0,2\}$.  If $\partial(z)=0$ we are done (with $d=1$).

  Thus we may assume that $\partial(z)=2$ and we write $z = b^{-a^m}
  c^{a^k}$ for $b,c \in \langle b_1,\ldots,b_r \rangle \setminus
  \{1\}$ and $m, k \in \mathbb{Z}/p\mathbb{Z}$ with $m \ne k$.

  \smallskip

  \noindent \underline{Case 1}: $m,k \ne 1$.  Here $z_1 = a^w$ for
  some $w\in \mathbb{Z}/p\mathbb{Z}$. Thus $\Theta_1(z) = [a,z_1^{-1}]
  = [a,a^{-w}]=1$, and $a \in M_{u_p}$.

  \smallskip

  \noindent \underline{Case 2}: $m=1$, $k\ne 1$.  Here
  \[
  \psi_1(b^{-a}) = (b^{-1},*,\ldots,*) \quad \text{and} \quad
  \psi_1(c^{a^k}) = (a^w,*,\ldots,*),
  \]
  where $w \in \mathbb{Z}/p\mathbb{Z}$ and the symbols $*$ denote
  unspecified entries.  Hence $z_1 = b^{-1}a^w$ so that $\Theta_1(z) =
  [a,z_1^{-1}] = [a,b]$.  This gives $a\,\Theta_1(z) = b^{-1} a
  b$. Remembering that $b_1$ and $b$ commute, we obtain $a, b_1 \in
  (M_{u_p})^{b^{-1}}$.

  \smallskip

  \noindent \underline{Case 3}: $m \ne 1$, $k=1$.  Here
  \[
  \psi_1(b^{-a^m}) = (a^w,*,\ldots,*) \quad \text{and} \quad
  \psi_1(c^a) = (c,*,\ldots,*),
  \]
  where $w \in \mathbb{Z}/p\mathbb{Z}$ and the symbols $*$ denote
  unspecified entries.  Hence $z_1 = a^wc$ so that $\Theta_1(z) =
  [a,z_1^{-1}] = c^{a^{1-w}}c^{-a^{-w}}$.  If $w \not \equiv -1
  \pmod{p}$, we are back in Case~1 or Case~2.

  Suppose that $w \equiv -1 \pmod{p}$.  Then $\Theta_1(z) =
  c^{a^2}c^{-a}$, where
  \[
  c^{a^2} = (*,c,*,\ldots,*) \quad \text{and} \quad
  c^{-a}=(c^{-1},*,\ldots,*)
  \]
  and the symbols $*$ denote unspecified powers of~$a$.  We recall from
  the definition of $\Theta_2$ that in the generic case $n \geq 2$
  this gives $\Theta_2(\Theta_1(z)) = 1$, hence $a, b_1\in
  M_{u_{pp}}$, where $u_{pp}$ is the $p^2$th vertex at
  level~$2$.  In the special case $n=1$ we have  $\Theta_1(\Theta_1(z)) =
  [a,c]$.  In this case we proceed similarly as in Case~2.
\end{proof}

\begin{proposition} \label{isG} Let $G$ be a just infinite multi-edge
  spinal group. Suppose $G$ is not $\Aut(T)$-conjugate to a group in
  $\boldsymbol{\mathcal{E}}$. Let $M$ be a dense torsion subgroup of
  $G$, with respect to the profinite topology. Then there exists a
  vertex $u$ of $T$ such that $M_u = G_u \cong G$.
\end{proposition}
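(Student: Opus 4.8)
The plan is to combine Propositions~\ref{proposition: first} and~\ref{proposition: second} iteratively, together with Proposition~\ref{proper} used in contrapositive form. The strategy is to produce, by passing to deeper and deeper vertices of $T$, a descendant $u$ at which the restricted subgroup contains \emph{all} of the generators $a, b_1, \ldots, b_r$; since $G_u \cong G = \langle a, b_1, \ldots, b_r\rangle$, this forces $M_u = G_u$. To start, apply Proposition~\ref{proposition: first} with $i=1$ to replace $M$ (after passing to a vertex and conjugating by a power of $a$) by a dense torsion subgroup containing some $b \in \langle b_1,\ldots,b_r\rangle$ with $\varepsilon_{b_1}(b)\neq 0$. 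After a further harmless change of generators for $\langle b_1,\ldots,b_r\rangle$ we may assume this element is $b_1$ itself. Then apply Proposition~\ref{proposition: second} to obtain a vertex and an element of $\langle b_1,\ldots,b_r\rangle$ such that, after conjugating, the resulting dense subgroup contains \emph{both} $a$ and $b_1$.

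Having secured $a$ and $b_1$, I would bootstrap to the remaining directed generators. The idea is to run Proposition~\ref{proposition: first} again, this time with $i=2$, starting from the dense torsion subgroup $M^{(1)}$ that already contains $a$ and $b_1$: this yields a descendant vertex and a conjugate so that the new subgroup contains some $b \in \langle b_1,\ldots,b_r\rangle$ with $\varepsilon_{b_2}(b)\neq 0$. The subtlety is that the conjugating element $g$ produced by Propositions~\ref{proposition: first} and~\ref{proposition: second} lies in $\stab_G(u)$ and acts on $T_u$ as an element of $\langle a, b_1,\ldots,b_r\rangle$; conjugation by such an element preserves membership of $a$ and of the subgroup $\langle b_1,\ldots,b_r\rangle^{\text{(so far)}}$ only up to replacing generators within $\langle a\rangle$ and $\langle b_1,\ldots,b_r\rangle$. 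Concretely, one checks: conjugating by a power of $a$ sends $b_1$ to another conjugate of a $b_i$-product but \emph{fixes} $a$; conjugating by an element $d\in\langle b_1,\ldots,b_r\rangle$ fixes each $b_i$ (these commute) and sends $a$ to $a[a,d]\in aG'$, so once $a$ and $b_1$ lie in the subgroup, $d$-conjugation keeps $b_1$ and keeps $a$ up to an element of the subgroup — hence $a$ stays in. The clean way to organize this is to prove, by induction on $s$, the statement: \emph{there is a vertex $u$ and a conjugate $M'$ of (a restriction of) $M$ such that $M'$ is a dense torsion subgroup of $G_u\cong G$ containing $a, b_1, \ldots, b_s$.} The base case $s=1$ is the previous paragraph. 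For the inductive step from $s$ to $s+1$: apply Proposition~\ref{proposition: first} with $i=s+1$ to get a further vertex/conjugate whose subgroup contains $a$, still contains $b_1,\ldots,b_s$ (up to re-selecting generators of $\langle b_1,\ldots,b_r\rangle$, which does not affect the span), and contains some $b$ with $\varepsilon_{b_{s+1}}(b)\neq 0$; then an application of Proposition~\ref{proposition: second} (with the roles of the $b_i$ permuted so that $b$ plays the role of $b_1$) upgrades this to a subgroup containing $a$ and $b$, hence containing $a, b_1,\ldots,b_{s+1}$ once one accounts for linear combinations. After $r$ steps we reach a vertex $u$ with $M_u \supseteq \langle a, b_1,\ldots,b_r\rangle = G_u$, so $M_u = G_u$.

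The main obstacle I anticipate is \emph{bookkeeping the change-of-generators} at each inductive step: Propositions~\ref{proposition: first} and~\ref{proposition: second} are stated with $b_1$ singled out, and each invocation may return an element of $\langle b_1,\ldots,b_r\rangle$ rather than a literal $b_i$, while also conjugating everything by an element acting as $a^k$ or as $d\in\langle b_1,\ldots,b_r\rangle$ on the relevant subtree. I would handle this by noting at the outset that a multi-edge spinal group is unchanged (as an abstract subgroup of $\Aut(T)$, and its defining data only up to the linear-algebraic moves already used in Lemma~\ref{dagger}) under replacing the tuple $(b_1,\ldots,b_r)$ by any tuple whose defining vectors span the same subspace of $(\mathbb{Z}/p\mathbb{Z})^{p-1}$; and that conjugation by a power of $a$ or by an element of $\langle b_1,\ldots,b_r\rangle$ induces precisely such a move together with fixing $a$ modulo $G'\le\stab_G(1)\le M_u$. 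With that observation in hand, the induction goes through routinely, and one last appeal to Proposition~\ref{proper} (contrapositive: if $M$ is proper dense then no $M_u$ equals $G_u$) is \emph{not} needed here — rather, it is exactly Proposition~\ref{isG} that will be contradicted later; here we simply conclude that the desired vertex $u$ exists.
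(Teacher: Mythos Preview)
Your strategy matches the paper's: apply Proposition~\ref{proposition: first} and then Proposition~\ref{proposition: second} to secure $a$ together with one directed generator at some vertex, and then iterate to collect the remaining $b_i$'s. The paper's execution is cleaner in two respects. First, it invokes Proposition~\ref{proposition: second} only \emph{once}: after $a$ lies in the restriction $M_u$, each subsequent conjugation by $a^k$ produced by Proposition~\ref{proposition: first} is \emph{inner}, so $(M_u)^{a^k}=M_u$ and nothing is lost; no further appeal to Proposition~\ref{proposition: second} is needed. Second, the reason the already-obtained generators $a,\tilde b_1,\ldots,\tilde b_s$ survive when one passes to a deeper vertex is simply the fractalness of the sub-multi-edge spinal group $\langle a,\tilde b_1,\ldots,\tilde b_s\rangle$ (Proposition~\ref{transitive}), not a change-of-generators manoeuvre.

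Your final bookkeeping paragraph contains an actual error: the assertion $\stab_G(1)\le M_u$ is false in general (it would force $M$ to have finite index), so your argument that $d$-conjugation preserves $a$ ``modulo $G'\le M_u$'' does not work. Fortunately this is unnecessary. Proposition~\ref{proposition: second} already hands you $a\in(M_u)^d$ directly, and since $d\in\langle b_1,\ldots,b_r\rangle$ commutes with every element of that subgroup, the previously-obtained $\tilde b_j$ satisfy $\tilde b_j^{\,d^{-1}}=\tilde b_j$ and hence lie in $(M_u)^d$ as soon as they lie in $M_u$, which holds by fractalness. With these two corrections your inductive scheme coincides with the paper's.
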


\begin{proof}
We first remark that for a vertex $u$ of $T$ and $g\in G$, 
\begin{equation}\label{eq:conjugateaway}
  (M^g)_{u^g} = G_{u^g} \Longleftrightarrow M_u = G_u.
\end{equation}

Now by Proposition~\ref{proposition: first}, there exists a vertex
$u_1$ of $T$ and an element $g\in \stab_G(u_1)$ such that $x_1 \in (M^g)_{u_1}$
with $x_1 \in \langle b_1, \ldots, b_r \rangle$ and
$\varepsilon_{b_1}(x_1) \ne 0$.  We modify our generating set of
directed automorphisms, by taking $\tilde{b}_1 = x_1$ instead
of~$b_1$.  Using~\eqref{eq:conjugateaway} we may assume, without loss
of generality, that $\tilde{b}_1\in M_{u_1}$.

By Proposition~\ref{proposition: second}, there exists a vertex $u_1v$
of $T_{u_1}$ and an element $h\in \stab_G(u_1v)$ with $a,\tilde{b}_1 \in
(M^h)_{u_1v}$.  Once more by \eqref{eq:conjugateaway}, we may assume
that $a,\tilde{b}_1\in M_{u_1v}$.
 
Applying Proposition~\ref{proposition: first} again, we see that there
exists $k\in \mathbb{Z}/p\mathbb{Z}$ and a vertex $u_1vu_2$ of
$T_{u_1v}$ such that $x_2 \in (M_{u_1vu_2})^{a^k}$ with $x_2\in
\langle \tilde{b}_1,b_2, \ldots, b_r\rangle $ and
$\varepsilon_{b_2}(x_2)\ne 0$. Note that $\varepsilon_{b_2}$ is now
defined with respect to the new generating set
$\tilde{b}_1,b_2,\ldots,b_r$ of directed automorphisms. Since
$a,\tilde{b}_1\in M_{u_1v}$, it follows that $a,\tilde{b}_1\in
M_{u_1vu_2}$; recall Proposition~\ref{transitive} with respect to the
multi-edge spinal group $\langle a, \tilde{b}_1 \rangle$.  In
particular, $(M_{u_1vu_2})^{a^k}=M_{u_1vu_2}$.  Hence, replacing $b_2$
by $\tilde{b}_2=x_2$, we obtain $a,\tilde{b}_1,\tilde{b}_2 \in
M_{u_1vu_2}$.  Continuing in this manner, we arrive at a vertex $u =
u_1vu_2 \cdots u_r$ such that $a,\tilde{b}_1,\ldots , \tilde{b}_r \in
M_u$, where $a,\tilde{b}_1,\ldots, \tilde{b}_r$ is a generating set
for~$G_u$.
\end{proof}

\begin{theorem} \label{last} Let $G=\langle a,b_1,\ldots,b_r \rangle$
  be a just infinite multi-edge spinal group. Suppose $G$ is not
  $\Aut(T)$-conjugate to a group in $\boldsymbol{\mathcal{E}}$. Then
  $G$ does not contain any proper dense torsion subgroups, with
  respect to the profinite topology.
\end{theorem}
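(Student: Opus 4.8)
The plan is to derive the statement directly from the two main structural results already in place, Proposition~\ref{isG} and Proposition~\ref{proper}, which pull in opposite directions. Suppose, for a contradiction, that $M$ is a proper dense torsion subgroup of $G$ with respect to the profinite topology. The goal is to locate a single vertex at which $M$ restricts to the whole companion group and simultaneously to a proper subgroup of it.

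First I would invoke Proposition~\ref{isG}. By hypothesis $G$ is just infinite and not $\Aut(T)$-conjugate to a group in $\boldsymbol{\mathcal{E}}$, and $M$ is a dense torsion subgroup; hence there is a vertex $u$ of $T$ with $M_u = G_u \cong G$. This is exactly the output of the preparatory machinery: the length-reduction of Theorem~\ref{theta} via the theta maps, together with Propositions~\ref{proposition: first} and~\ref{proposition: second}, serves to push the generators $a, b_1,\ldots,b_r$ down into $M_u$ at a suitable vertex $u$.

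Next I would apply Proposition~\ref{proper} in the reverse direction, after checking that its hypotheses hold. Since $G$ is a just infinite multi-edge spinal group, and by Proposition~\ref{prop:special-group} the GGS-group $\mathcal{G}$ of~\eqref{eq:exception} is not just infinite, the group $G$ cannot be $\Aut(T)$-conjugate to $\mathcal{G}$; thus $G$ is branch by Proposition~\ref{prop:branch}, which is what the proof of Proposition~\ref{proper} uses. As $M$ is a proper dense subgroup of $G$, Proposition~\ref{proper} yields that $M_v$ is a proper subgroup of $G_v$ for \emph{every} vertex $v$ of $T$, in particular for $v = u$. This contradicts $M_u = G_u$, and the theorem follows.

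Since the genuine content is carried by the earlier propositions, I do not expect a real obstacle in this final step; the only point requiring care is the bookkeeping of hypotheses, namely that ``$G$ just infinite'' supplies (via Proposition~\ref{prop:special-group} and Proposition~\ref{prop:branch}) the branchness needed for Proposition~\ref{proper}, while ``$G$ not $\Aut(T)$-conjugate to a group in $\boldsymbol{\mathcal{E}}$'' is precisely what Proposition~\ref{isG} requires.
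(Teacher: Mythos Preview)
Your proof is correct and matches the paper's own argument essentially line for line: assume a proper dense torsion subgroup $M$, apply Proposition~\ref{proper} to get $M_u \lneq G_u$ for every vertex $u$, and apply Proposition~\ref{isG} to obtain a vertex with $M_u = G_u$, a contradiction. Your additional remarks about branchness via Propositions~\ref{prop:special-group} and~\ref{prop:branch} are already absorbed into the proof of Proposition~\ref{proper}, so they are not needed here, but they do no harm.
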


\begin{proof}
  Suppose on the contrary that $M$ is a proper dense torsion subgroup of $G$,
  with respect to the profinite topology. By Proposition~\ref{proper},
  for every vertex $u\in T$ we have $M_u$ is properly contained in
  $G_u$. However, by Proposition~\ref{isG}, the subgroup $M_u$ is all
  of $G\cong G_u$. This gives the required contradiction.
\end{proof}

Since $\boldsymbol{\mathcal{E}}$ does not contain any torsion groups
and every torsion multi-edge spinal group is a $p$-group,
Theorem~\ref{start} is a direct consequence of Theorem~\ref{last}.

We finish by proving Corollary \ref{cor:main}.

\begin{proof} [Proof of Corollary \ref{cor:main}]
  Suppose that $G = G_\mathbf{E} = \langle a, b_1,\ldots, b_r
  \rangle$, the multi-edge spinal group associated to some defining
  vectors~$\mathbf{E}$.  By Theorem~1.2, all maximal subgroups of $G$
  have finite index.  Since $G$ is residually finite and just
  infinite, its chief factors are finite.  Hence
  \cite[Lemma~3]{GrigWils} shows that, if $K$ is a subdirect subgroup
  of some direct product $G \times \ldots \times G$ of copies of~$G$,
  then all maximal subgroups of $K$ are of finite index.

  Let $H$ be a group commensurable with~$G$, and fix a finite index
  subgroup $L$ of~$H$ that is isomorphic to a finite index subgroup
  $K$ of~$G$.  For $i \in \{1,\ldots, r\}$, we write $G_i = \langle a,
  b_i \rangle$ for the GGS-subgroup of~$G$, generated by $a$
  and~$b_i$.  By \cite{Pervova1}, each of these GGS-groups $G_i$ has the congruence
  subgroup property.  Since $K \cap G_i$ has finite index in $G_i$,
  there is $m_i \in \mathbb{N}$ such that
  \[
  \mathrm{Stab}_{G_i}(m_i) \subseteq K \cap G_i \subseteq K.
  \]
  Since $\mathrm{Stab}_{G_i}(m_i)$ is subdirect in $G_i \times
  \overset{p^{m_i}}{\ldots} \times G_i$, we conclude that, for $m =
  \max \{ m_i \mid 1 \leq i \leq r \}$, the group $\mathrm{Stab}_K(m)$
  is subdirect in $G \times \overset{p^m}{\ldots} \times G$.

  As observed above, this implies that all maximal subgroups of
  $\mathrm{Stab}_K(m)$ are of finite index.  Since $\mathrm{Stab}_K(m)$
  has finite index in~$K$ and $L \cong K$ has finite index in~$H$, we
  deduce from~\cite[Lemma 1]{GrigWils} that all maximal subgroups of
  $H$ have finite index.
\end{proof}

\subsection*{Acknowledgement}
Special cases of the results in this paper form part of Alexoudas' PhD
thesis~\cite{thesis}.


\end{document}